\DeclareMathOperator*{\argmin}{arg\!\min}
\newenvironment{proofof}[1]{\par
  \pushQED{\qed}%
  \normalfont \topsep6\p@\@plus6\p@\relax
  \trivlist
  \item[\hskip\labelsep
        \bfseries
    Proof of #1\@addpunct{.}]\ignorespaces
}{%
  \popQED\endtrivlist\@endpefalse
}
\theoremstyle{definition}
\newtheorem{defi}{Definition}[section]
\theoremstyle{remark}
\newtheorem{rem}[defi]{Remark}
\theoremstyle{plain}
\newtheorem{lem}[defi]{Lemma}
\newtheorem{cor}[defi]{Corollary}
\newtheorem{thm}[defi]{Theorem}
\newtheorem{prp}[defi]{Proposition}
\begin{document}

\begin{frontmatter}

\title{
  Smoothed residual stopping for statistical inverse problems via truncated SVD
  estimation\thanksref{t1}
}
\runtitle{Smoothed residual stopping for statistical inverse problems}
\thankstext{t1}{I am very grateful for long discussions with Markus Reiß and
  Martin Wahl and the comments of an associate editor and two anonymous
  referees.}

\author{
  \fnms{Bernhard} \snm{Stankewitz}\corref{}\ead[label=e1]{stankebe@math.hu-berlin.de}
}
\address{
  Institut für Mathematik\\ 
  Humboldt-Universität zu Berlin, Germany\\
  \printead{e1}
}

\runauthor{B. Stankewitz}

\begin{abstract}
  This work examines under what circumstances adaptivity for truncated SVD
  estimation can be achieved by an early stopping rule based on the smoothed
  residuals \( \| ( A A^{\top} )^{\alpha / 2} ( Y - A \widehat{\mu}^{( m )})
  \|^{2} \). 
  Lower and upper bounds for the risk are derived, which show that moderate
  smoothing of the residuals can be used to adapt over classes of signals with
  varying smoothness, while oversmoothing yields suboptimal convergence rates.
  The range of smoothness classes for which adaptation is possible can
  be controlled via $ \alpha $. 
  The theoretical results are illustrated by Monte-Carlo simulations.
\end{abstract}

\begin{keyword}[class=MSC]
\kwd[]{65J20}
\kwd{}
\kwd[]{62G05}
\end{keyword}

\begin{keyword}
\kwd{Linear inverse problems} \kwd{Spectral cut-off}
\kwd{Early stopping}
\kwd{Discrepancy principle}
\kwd{Adaptive estimation}
\kwd{Oracle inequalities}
\kwd{Weighed residuals}
\end{keyword}


\end{frontmatter}

\numberwithin{equation}{section}
\setcounter{page}{+1}

\section{Introduction}
\label{sec_Introduction}

\subsection{Preliminaries on early stopping}
\label{ssec_PreliminariesOnEarlyStopping}

In machine learning and statistics, one of the central problems is that of
coping with the generalisation error or, put another way, choosing the correct
tuning parameter for an estimation procedure.
For iterative procedures, the generalisation error typically decreases up to a
point at which the algorithm begins to overfit.
Hence, the problem becomes that of choosing a suitable iteration step.
Classically, this problem would be addressed by 
model selection criteria such as \emph{cross-validation}, \emph{unbiased risk
estimation} or \emph{Lepski's balancing principle}.
These criteria, however, require that all estimators we want to choose from be
computed and then compared against each other.
For high dimensional problems in particular, this may come at a computationally
prohibitive cost.
An alternative are \emph{early stopping rules}, which halt the procedure at an
iteration $ \widehat{m} $ depending only on the iterates of index $ m \le
\widehat{m} $ and potentially additional quantities computed up to that point.
Since these require the computation of much fewer iterates, they present the
potential of simultaneously achieving computational and statistical
efficiency.

In order to locate this work in the literature on early stopping, we shortly
discuss three exemplary approaches:
In practical machine learning applications, early stopping rules are widely
adopted.
They are usually based on a well founded heuristic understanding of the
regularisation properties of early stopping.
For example, the user may split the data into training and validation sets and
iterate the learning algorithm on the training set until the validation error
does not improve any further, see Chapter 7 in \citet{GoodfellowEtal2016}. 
However, proper theoretical results for such rules are lacking. 

Some progress towards theoretical foundations of stopping rules has been made in
the kernel learning literature.
For the regression problem of learning $ f^{*} $ from data generated by $ Y =
f^{*}(X) + \varepsilon $, stopping rules have been suggested for gradient
descent procedures, initially, via oracle stopping times, which cannot be
computed from the data, see \citet{BuehlmannYu2003} and \citet{CaponettoEtal2007}.
Later, these have been converted to data dependent rules using empirical
versions of Gaussian and Rademacher complexities, see
\citet{RaskuttiEtal2014} and \citet{YangEtal2019}. 
For example, in \cite{RaskuttiEtal2014}, the authors learn $ f^{*} $ by
applying gradient descent to the problem $ \min_{z \in \mathbb{R}^{n}} \| Y -
\sqrt{K} z \|^{2} $, where $ Y $ is the vector of observations and $ K $ is the
empirical kernel matrix.
The procedure is stopped at
\begin{align}
  \widehat{T}: 
  = \inf \Big\{ 
      t \in \mathbb{N}: 
      \frac{1}{n} \sum_{i = 1}^{n}
      \min \{ \widehat{\lambda}_{i}, t^{- 1 / 2} \} 
      > ( \sigma t )^{-1} 
    \Big\} 
    - 1,
\end{align}
where the $ ( \widehat{\lambda}_{i} ) $ are the scaled eigenvalues of $ K $ and
$ \sigma $ is the noise level (up to a constant).
This rule is computable from the data and allows to adapt to the complexity of
the underlying kernel space.
Yet, other than the heuristic stopping rule above, this rule
structurally cannot adapt to the true data generating process. 
The kernel matrix and hence the sequence $ ( \widehat{\lambda}_{i} )_{i = 1,
\dots n} $ only depends on the design variables.
Therefore, $ \widehat{T} $ does not depend on $ f^{*} $ itself and will overfit
when the true smoothness of $ f^{*} $ is larger than the minimal smoothness of
functions from the kernel space.

Finally, additional progress has been made in the literature on statistical
inverse problems, which is another important framework for learning, see
e.g. \citet{RosascoEtal2005}.  
Blanchard, Hoffmann and Rei\ss\ \cite{BlanchardEtal2018a} consider early stopping
for a $ D $-dimensional discretisation of the inverse problem $ Y = A \mu +\delta \dot W $ with white noise $ \dot W $ and the sequence $ (
\widehat{\mu}^{( m )} )_{m = 1, \dots, D} $ of truncated SVD estimators.
They analyse the stopping rule 
\begin{align}
  \label{eq_TauForNonSmoothedResiduals}
  \tau:
  = \inf \{ 
      m \in \mathbb{N} \cup \{ 0 \}:
      \| Y - A \widehat{\mu}^{( m )} \|^{2} \le \delta^{2} D
    \}
\end{align}
based on the \emph{discrepancy principle}, which is well studied for
deterministic inverse problems, see e.g. \citet{EnglEtal1996}. 
This problem is similar to \cite{RaskuttiEtal2014} in that minimising $ \|
Y - \sqrt{K} z \|^{2} $ can also be understood as solving a finite dimensional
inverse problem.
The stopping rule $ \tau $, however, structurally differs from $ \widehat{T} $
in that, via $ Y $,  it takes the true signal $ \mu $ into account. 
Indeed, the authors prove that, up to a dimension dependent error term, stopping
according to $ \tau $ satisfies an oracle inequality, which yields rate optimal
adaptation simultaneously over a range of Sobolev-type ellipsoids of differing
smoothness.
Therefore, while the setting in \cite{BlanchardEtal2018a} is less general than
in the kernel literature, their version of early stopping is more comprehensive.
In addition, their setting can be understood as a prototypical model of an
iterative estimation procedure.

The analysis in this work is a continuation of the third approach above, 
where we stop using the $ ( \alpha ) $-smoothed residuals $
\| ( A A^{\top} )^{\alpha / 2} ( Y - A \widehat{\mu}^{( m )} ) \|^{2} $
for general $ \alpha > 0 $ instead.
In the next section, we motivate in detail why this should be considered and
what can be gained by it.


\subsection{Model and problem formulation}
\label{ssec_ModelAndProblemFormulation}

We recall in detail the setting in Blanchard, Hoffmann and Rei\ss\
\cite{BlanchardEtal2018a}:
They consider problems of the form
\begin{align}
  \label{eq_StatisticalInverseProblem}
  Y = A \mu + \delta \dot W,  
\end{align}
where \( A: H_{1} \to H_{2} \) is a linear bounded operator between real Hilbert
spaces, \( \mu \in H_{1} \) is the signal of interest, \( \delta > 0 \) is the
noise level and \( \dot W \) is a Gaussian white noise in \( H_{2} \). 
In any practical application, the problem has to be discretised by the user. 
Therefore, we can assume that \( H_{1} = \mathbb{R}^{D} \) and \( H_{2} =
\mathbb{R}^{P} \) for \( D \le P \), which both are possibly very large.
Further, assume that \( A: \mathbb{R}^{D} \to \mathbb{R}^{P} \) is
one-to-one. 
By transforming \eqref{eq_StatisticalInverseProblem}, using the singular value
decomposition (SVD) of \( A \), we arrive at the Gaussian vector observation
model
\begin{align}
  \label{eq_SVDRepresentation}
  Y_{i} = \lambda_{i} \mu_{i} + \delta \varepsilon_{i}, 
  \qquad i = 1, \dots, D.
\end{align}
\( \lambda_{1} \ge \lambda_{2}, \dots, \lambda_{D} > 0 \) are the singular
values of \( A \), \( ( \mu_{i} )_{i \le D} \) the coefficients of \( \mu \) in
the orthonormal basis of singular vectors and \( ( \varepsilon_{i} )_{i \le D}
\) are independent standard Gaussian random variables. 

In order to recover the signal \( \mu = ( \mu_{i} )_{i \le D} \) from the
observation of \eqref{eq_SVDRepresentation}, we use the \emph{truncated SVD}
(\emph{cut-off}) estimators \( \widehat{\mu}^{( m )}, m = 0, \dots, D \) given
by
\begin{align}
  \label{eq_TruncatedSVDEstimators}
  \widehat{\mu}_{i}^{( m )}: 
  = \mathbf{1} \{ i \le m \} \lambda_{i}^{-1} Y_{i}, 
  \qquad i = 1, \dots, D.
\end{align}
For a fixed index \( m \), the risk (expected squared Euclidean error) of \(
\widehat{\mu}^{( m)} \) can be decomposed into a bias and a variance term:
\begin{align}
  \label{eq_Bias}
  B^{2}_{m}(\mu): 
  & = \| \mathbb{E} \widehat{\mu}^{( m )} - \mu \|^{2}  
    = \sum_{i = m + 1}^{D} \mu_{i}^{2} \\ 
  \label{eq_Variance}
  \qquad \text{ and } \qquad 
  V_{m}: 
  & = \mathbb{E} \| \widehat{\mu}^{( m )} - \mathbb{E} \widehat{\mu}^{( m )} \|^{2} 
    = \sum_{i = 1}^{m} \lambda_{i}^{- 2} \delta^{2}. 
\end{align}
In particular, the estimators are ordered with decreasing bias and increasing
variance in \( m \). 
We reemphasise the importance of this setting as a prototypical model of an
iterative method.
Note that the truncated SVD-estimators are iterative in the sense that the
SVD of the operator has to be computed alongside the
estimators.
This is the case, since in practice, we cannot expect the observation vector $ Y
$ to be represented in an SVD basis, see also the detailed discussion in
\cite{BlanchardEtal2018a} and the references therein.
Other iterative methods often share important qualitative features with cut-off
estimation.
Therefore, results from this simple framework typically carry over
to more complex settings.
For example, Blanchard, Hoffmann and Rei\ss\ \cite{BlanchardEtal2018b} transfer
the results of \cite{BlanchardEtal2018a} to general regularisation schemes,
including gradient descent.  

In \cite{BlanchardEtal2018a}, the authors consider stopping according to the
discrepancy principle, i.e. at the smallest \( m \) which satisfies
\begin{align}
  \label{eq_ClassicalResidualStoppingCondition}
  \| Y - A \widehat{\mu}^{( m )} \|^{2} \le \kappa
\end{align}
for a suitable critical value \( \kappa > 0 \). 
Their analysis shows that generally, stopping according to the condition in
\eqref{eq_ClassicalResidualStoppingCondition} is optimal (in terms of an oracle
inequality) up to a dimension dependent error term, which stems from the
variability of the residuals.
For signals $ \mu $, which are not too smooth relative to the approximation
dimension $ D $, this term is of lower order. 
More precisely, \eqref{eq_ClassicalResidualStoppingCondition} yields
optimal results simultaneously for all signals satisfying \(
m^{\mathfrak{b}}(\mu) \gtrsim \sqrt{D} \), where
\begin{align}
  m^{\mathfrak{b}}(\mu): 
  = \inf \{ m \ge 0: B^{2}_{m}(\mu) \le V_{m} \} 
\end{align}
is the index at which balance between the squared bias and variance is obtained.
Otherwise, random deviations in the residuals systematically lead to stopping
times which are too large. 

Alternatively, \citet{BlanchardMathe2012} apply the discrepancy
principle to the normal equation \( A^{\top} Y = A^{\top} A \mu \) and stop
according to 
\begin{align}
  \label{eq_DiscrepancyPrinciple}
  \| A^{\top} ( Y - A \widehat{\mu}^{( m )} ) \|^{2} 
  = \| ( A A^{\top} )^{1 / 2} ( Y - A \widehat{\mu}^{( m )} ) \|^{2} 
  \le \kappa, 
\end{align}
i.e. the residuals are smoothed by \( ( A A^{\top} )^{1 / 2} \). 
This is motivated by the fact that in the infinite-dimensional problem, $ A^{*}
\dot W $ can be represented as an element of $ H_{1} $ when $ A $ is
Hilbert-Schmidt.
The condition in \eqref{eq_DiscrepancyPrinciple} is able to 
to control the stochastic part of the residuals and avoid the
dimension-dependency from \cite{BlanchardEtal2018a}.
Yet, it typically results in suboptimal convergence rates, since the variability
of the residuals is reduced too much, which leads to stopping times which are
too small.

These results raise the question of whether there is a stopping criterion in
between \eqref{eq_ClassicalResidualStoppingCondition} and
\eqref{eq_DiscrepancyPrinciple} which is able to mitigate the
dimension-dependency from \cite{BlanchardEtal2018a} and thereby increase the
range of signals for which adaptation is possible without slipping into the
suboptimal regime discussed in \cite{BlanchardMathe2012}.  
A very natural consideration is to smooth the residuals by a general power \(
\alpha \ge 0 \) of \( ( A A^{\top} )^{1 / 2} \) and stop at the smallest index
\( m \) which satisfies
\begin{align}
  \label{eq_SmoothedResidualStopping}
  R_{m, \alpha}^{2}: 
  = \| ( A A^{\top} )^{\alpha / 2} ( Y - A \widehat{\mu}^{( m )} ) \|^{2} 
  \le \kappa,
\end{align}
where \( R^{2}_{m, \alpha} \) are the (\( \alpha \)-)\emph{smoothed residuals}.
The main contribution of this paper is to answer the posed question in the
affirmative for the criterion in \eqref{eq_SmoothedResidualStopping}, provided
that the inverse problem is moderately ill-posed.
Smoothing with \( \alpha > 0 \) reduces the variability of \( R^{2}_{m, \alpha}
\), which mitigates the constraint from \cite{BlanchardEtal2018a}.
For values of \( \alpha \) which are small relative to the decay of the singular
values of \( A \), smoothing does not produce suboptimal rates.
Additionally, it is possible to eliminate the dimension constraint entirely
before the oversmoothing effect from \cite{BlanchardMathe2012} manifests. 
In order to further motivate stopping according to $ R_{m, \alpha}^{2} $, we
compare it to other possible generalisations of the discrepancy principle:

\begin{rem}[Other discrepancy-type rules]
  \label{rem_OtherDiscrepancyTypeRules}
  \ 
  \begin{enumerate}[label=(\alph*)]
    \item \citet{BlanchardMathe2012} also choose a stopping criterion in between
      \eqref{eq_ClassicalResidualStoppingCondition}  and
      \eqref{eq_DiscrepancyPrinciple} in order to guarantee optimality. 
      They weigh the residuals in \eqref{eq_DiscrepancyPrinciple} further by \(
      \varrho_{\lambda}(A^{\top} A) \) for \( \varrho_{\lambda}(t): = 1 /
      \sqrt{t + \lambda} \), \( t > 0 \) and a tuning parameter \( \lambda \).
      In their framework, however, the final choice of $ \lambda $ directly
      depends on the smoothness of the true signal and only yields optimal results
      for this smoothness class.
      Therefore, their stopping criterion will not adapt simultaneously to
      signals of varying smoothness, which is precisely the goal of our
      analysis.    

    \item Other well founded variations of the discrepancy principle mostly take
      the form 
      \begin{align}
        \label{eq_HmResiduals}
        \| H_{m}(A A^{\top}) ( Y - A \widehat{\mu}^{( m )} ) \|^{2} 
        \le \kappa,
      \end{align}
      i.e. the weight of $ ( A A^{\top} ) $ depends on $ m $, see e.g.
      \citet{EnglEtal1996}. 
      Compared to the smoothed residuals, such a rule is computationally more
      expensive:
      In our setting, the computation of the first $ m $ estimators roughly
      requires $ O(m D^{2}) $ operations, see \cite{BlanchardEtal2018a}. 
      With the update $ R_{m + 1, \alpha}^{2} = R _{m, \alpha}^{2} -
      \lambda_{m + 1}^{2 \alpha} Y_{m + 1}^{2} $, the additional computational cost
      of the smoothed residuals is negligible.
      Note that the $ m $-th eigenvalue $ \lambda_{m} $ already has to be computed
      for $ \widehat{\mu}^{( m )} $.
      In contrast, computing \eqref{eq_HmResiduals} for $ i = 0, \dots, m $
      potentially requires $ O(m D^{2}) $ operations itself. 
      If we regard early stopping as a tool to treat the computational
      complexity of the problem, this provides further motivation for the
      $ ( \alpha ) $-smoothed residuals.
  \end{enumerate}
\end{rem}

The remainder of the paper is structured as follows:
In Section \ref{sec_FrameworkForTheAnalysisAndMainResults}, we collect the
structural assumptions of the analysis and provide an interpretation of the
smoothed residual stopping procedure in \eqref{eq_SmoothedResidualStopping}
as estimating the bias of a smoothed version of the risk.
At the end, we present the main results of the paper, which are
derived in Section \ref{sec_DerivationOfTheMainResults}. 
Its constraints in terms of lower bounds are explored in Section
\ref{sec_ConstraintsInTermsOfLowerBounds}. 
Finally, Section \ref{sec_DiscussionAndSimulations} discusses different choices
for the smoothing parameter \( \alpha \) and illustrates the results by
Monte-Carlo simulations.



\section{Framework for the analysis and main results}
\label{sec_FrameworkForTheAnalysisAndMainResults}

\subsection{Structural assumptions}
\label{ssec_StructuralAssumptions}

Throughout the paper, we assume that the inverse problem is moderately
ill-posed, i.e. the singular values \( ( \lambda_{i} )_{i \le D} \) satisfy a
\emph{polynomial spectral decay} assumption of the form
\begin{align}
  \label{eq_PSD}
  C_{A}^{-1} i^{- p} \le \lambda_{i} \le C_{A} i^{- p}, 
  \qquad i = 1, \dots, D
  \qquad \qquad ( \text{PSD}(p, C_{A}) ) 
\end{align}
for some \( p \ge 0 \) and \( C_{A} \ge 1 \).  
By dividing Equation \eqref{eq_SVDRepresentation} by \( \lambda_{1} \), we can
further assume that \( \lambda_{i} \le 1 \), \( i = 1, \dots, D \). 
Additionally, we always require that the critical value \( \kappa \) satisfies
\begin{align}
  \label{eq_AssumptionOnKappa}
  | \kappa - \sum_{i = 1}^{D} \lambda_{i}^{2 \alpha} \delta^{2} | 
  \le C_{\kappa} s_{D} \delta^{2} 
  \qquad \text{ with } \qquad 
  s_{D}^{2}: = 2 \sum_{i = 1}^{D} \lambda_{i}^{4 \alpha } 
\end{align}
for an absolute constant \( C_{\kappa} > 0 \).  

Note that $ \sum_{i = 1}^{D} \lambda_{i}^{2} \delta^{2} $ is the
expectation of the smoothed residuals for the zero signal at $ m = 0 $, since 
\begin{align}
  R^{2}_{0, \alpha} 
  = \sum_{i = 1}^{D} \Big( 
      \lambda_{i}^{2 + 2 \alpha} \mu_{i}^{2}  
    + 2 \lambda_{i}^{1 + 2 \alpha} \mu_{i} \delta \varepsilon_{i}   
    + \lambda_{i}^{2 \alpha} \delta^{2} \varepsilon_{i}^{2}
    \Big). 
\end{align}
Similarly, $ s_{D} \delta^{2} $ is the standard deviation of the 
dominant stochastic part of the term above.
Therefore, \eqref{eq_AssumptionOnKappa} states that up to small deviations, $
\kappa $ should be chosen as the expectation of the smoothed residuals in the
pure noise case.

In the following, we denote essential inequalities up to an absolute constant by
``\( \lesssim, \gtrsim, \sim \)".
Further dependencies on \( \alpha \), the operator \( A \), i.e. \( p \) and \(
C_{A} \), and \( C_{\kappa} \), are denoted by indices \( \alpha, A \) and \(
\kappa \).  
Finally, we assume that all smoothing indices \( \alpha \) are bounded from
above by some \( \bar \alpha > 0 \).
This guarantees that \( \lambda_{i}^{\alpha} \sim_{A} i^{- \alpha p} \), \(
i \le D \).  
Under \hyperref[eq_PSD]{\( ( \text{PSD}(p, C_{A}) ) \)}, the order of \( s_{D}
\) is given by
\begin{align}
  \label{eq_SizeOfSD}
  s_{D} \sim_{\alpha, A} \begin{cases}
                           D^{1 / 2 - 2 \alpha p}, & \alpha p < 1 / 4, \\ 
                           \log D,                 & \alpha p = 1 / 4, \\ 
                           1,                      & \alpha p > 1 / 4. 
                         \end{cases}
\end{align}
The fact that the order of \( s_{D} \) is decreasing in \( \alpha \) will later
allow to relax the constraint from \citet{BlanchardEtal2018a}. 
The variance of \( \widehat{\mu}^{( m )} \) is of order
\begin{align}
  \label{eq_SizeOfTheVariance}
  V_{m} 
  = \sum_{i = 1}^{m} \lambda_{i}^{- 2} \delta^{2} 
  \sim_{A} m^{2 p + 1} \delta^{2}. 
\end{align}
For the analysis of lower bounds in Section
\ref{sec_ConstraintsInTermsOfLowerBounds}, we consider signals from
\emph{Sobolev-type ellipsoids} 
\begin{align}
  \label{eq_SobolevEllipsoid}
  H^{\beta}(r, D): 
  = \Big\{ 
      \mu \in \mathbb{R}^{D}: 
      \sum_{i = 1}^{D} i^{2 \beta} \mu_{i}^{2} \le r^{2}
    \Big\} 
  \qquad \text{for some } \beta \ge 0, r > 0. 
\end{align}
For \( \mu \in H^{\beta}(r, D) \), we have the upper bound
\begin{align}
  \label{eq_SizeOfTheBias}
  B^{2}_{m}(\mu) 
  = \sum_{i = m + 1}^{D} \mu_{i}^{2} 
  \le ( m + 1 )^{- 2 \beta} r^{2} 
\end{align}
for the squared bias of \( \widehat{\mu}^{( m )} \). 
The bounds in \eqref{eq_SizeOfTheVariance} and \eqref{eq_SizeOfTheBias} are
balanced at the order of the \emph{minimax-truncation index} 
\begin{align}
  \label{eq_MinimaxTruncationIndex}
  t^{mm}_{\beta, p, r} 
  = t^{mm}_{\beta, p, r}(\delta): 
  = ( r^{2} \delta^{- 2} )^{1 / ( 2 \beta + 2 p + 1 )}. 
\end{align}

Taking the asymptotic view that $ D = D(\delta) \to \infty $ for $ \delta \to 0
$, the rate $ v_{\delta}^{2} $ is optimal in the minimax sense if there exist
estimators $ ( \widehat{\mu}_{\delta})_{\delta > 0} $ in the models
corresponding to the ellipsoids $ H^{\beta}(r, D(\delta)) $ such that  
\begin{align}
  \limsup_{\delta \to 0} 
  v_{\delta}^{- 2} 
  \sup_{\mu \in H^{\beta}(r, D(\delta))} 
  \mathbb{E} \| \widehat{\mu}_{\delta} - \mu \|^{2} 
  < \infty 
\end{align}
and 
\begin{align}
  \liminf_{\delta \to 0} 
  v_{\delta}^{- 2} \inf_{\widehat{\mu}}
  \sup_{\mu \in H^{\beta}(r, D(\delta))} 
  \mathbb{E} \| \widehat{\mu} - \mu \|^{2} 
  > 0, 
\end{align}
where the infimum is taken over all estimators $ \widehat{\mu} $. 
A deterministic stopping index of the order of the minimax truncation index $
t^{mm}_{\beta, p, r} $ in \eqref{eq_MinimaxTruncationIndex} yields the rate
\begin{align}
  \label{eq_MinimaxRate}
  \mathcal{R}^{*}_{\beta, p, r}(\delta): 
  = r^{2} ( r^{- 2} \delta^{2})^{2 \beta / ( 2 \beta + 2 p + 1 )}.
\end{align}
This is the minimax rate in the infinite-dimensional Gaussian sequence model.
Note that lower bounding the minimax risk in the infinite-dimensional
case, up to a constant, only requires to consider alternatives in the first $
t^{mm}_{\beta, p, r} $ components, see e.g. Proposition 4.23 in
\citet{Johnstone2017}.
Therefore, if $ D(\delta) $ is chosen at least of the order of $ t^{mm}_{\beta,
p, r} $, the rate $ \mathcal{R}^{*}_{\beta, p, r}(\delta) $ is also minimax in
our setting.
In the asymptotic considerations, we will always assume that this is the case,
since we can also think of $ t^{mm}_{\beta, p, r} $ as the minimally sufficient
approximation dimension.
Indeed, the error of approximating a signal from an infinite-dimensional Sobolev
ellipsoid of smoothness $ \beta $ by a signal from $ H^{\beta}(r, D) $ will
only be negligible if $ D(\delta) \gtrsim t^{mm}_{\beta, p, r} $. 


\subsection{Smoothed residual stopping as bias estimation}
\label{ssec_SmoothedResidualStoppingAsBiasEstimation}

For a clearer formulation of the results, we introduce continuous versions of
the bias and the variance by linearly interpolating Equations 
\eqref{eq_Bias} and \eqref{eq_Variance}.
For \( t \in [ 0, D ] \), we set 
\begin{align}
  B^{2}_{t}(\mu): 
  & = ( \lceil t \rceil - t) \mu_{\lceil t \rceil}^{2} 
    + \sum_{i = \lceil t \rceil + 1}^{D} \mu_{i}^{2} \\ 
  \qquad \text{ and } \qquad 
  V_{t}: 
  & = \sum_{i = 1}^{\lfloor t \rfloor} \lambda_{i}^{- 2} \delta^{2} 
    + ( t - \lfloor t \rfloor ) \lambda_{\lceil t \rceil}^{- 2} \delta^{2}, 
\end{align}
where \( \lfloor t \rfloor \) and \( \lceil t \rceil \) are the floor and
ceiling functions, respectively.
We can define a continuous cut-off estimator \( \widehat{\mu}^{( t )} \) such
that \( \mathbb{E} \| \widehat{\mu}^{( t )} - \mu \|^{2} = B^{2}_{t}(\mu) +
V_{t}, t \in [ 0, D ] \): By randomising between the discrete estimators with
index \( \lfloor t \rfloor \) and \( \lceil t \rceil \), we set
\begin{align}
  \label{eq_ContinuousTruncatedSVDEstimator}
  \widehat{\mu}^{( t )}_{i}: 
  = ( 
      \mathbf{1} \{ i \le \lfloor t \rfloor \} 
    + \xi_{t} \mathbf{1} \{ i = \lceil t \rceil \}
    ) 
    \lambda_{i}^{-1} Y_{i}, 
  \qquad i = 1, \dots, D,
\end{align}
where \( \xi_{t} \) are Bernoulli random variables with success probabilities \(
t - \lfloor t \rfloor \) independent of everything else.
This also gives a continuous version of the smoothed residuals:
\begin{align}
  \label{eq:ContinuousSmoothedResiduals}
  R^{2}_{t, \alpha}: 
  & = \| ( A A^{\top} )^{\alpha / 2} ( Y - A \widehat{\mu}^{( t )} ) \|^{2} \\ 
  \notag 
  & = ( \mathbf{1} \{ t \ne \lceil t \rceil \} - \xi_{t} ) 
      \lambda_{\lceil t \rceil}^{2 \alpha} Y_{\lceil t \rceil}^{2} 
    + \sum_{i = \lceil t \rceil + 1}^{D} \lambda_{i}^{2 \alpha} Y_{i}^{2}
\end{align}
for \( t \in [ 0, D ] \).
The (\( \alpha \)-)\emph{smoothed residual stopping time} 
\begin{align}
  \label{eq_StoppingTime}
  \tau_{\alpha}: 
  = \inf \{ m \in \mathbb{N} \cup \{ 0 \}: R^{2}_{m, \alpha} \le \kappa \}
\end{align}
yet remains integer.
In the following, integer indices are denoted by \( m \) and continuous indices
are denoted by \( t \). 

Applying optional stopping to the martingale \( M_{m}: = \sum_{i = 1}^{m}
\lambda_{i}^{- 2} ( \varepsilon_{i}^{2} - 1 ) \), \( m \le D \), yields 
\begin{align}
  \label{eq_OptionalStoppingIdentity}
  \mathbb{E} \| \widehat{\mu}^{( \tau_{\alpha} )} - \mu \|^{2} 
  = \mathbb{E} \Big( 
      \sum_{i = \tau_{\alpha} + 1}^{D} \mu_{i}^{2} 
    + \sum_{i = 1}^{\tau_{\alpha}}
      \lambda_{i}^{- 2} \delta^{2} \varepsilon_{i} ^{2} 
    \Big) 
  = \mathbb{E} \big( B^{2}_{\tau_{\alpha}}(\mu) + V_{\tau_{\alpha}} \big).
\end{align}
Therefore, at best, the risk at \( \tau_{\alpha} \) behaves like the risk at
the \emph{classical oracle index} 
\begin{align}
  \label{eq_ClassicalOracle}
  t^{\mathfrak{c}} = t^{\mathfrak{c}}(\mu): 
  = \argmin_{t \in [ 0, D ]} 
    \mathbb{E} \| \widehat{\mu}^{( t )} - \mu \|^{2}
\end{align}
which minimises the risk over all deterministic stopping indices.
There is, however, no direct connection between \( \tau_{\alpha} \) and \(
t^{\mathfrak{c}} \). 
This is intrinsic to the sequential nature of the analysis, since at truncation
index \( t \), we cannot say anything about the behaviour of the bias for larger
indices.

For our purposes, we instead consider the \emph{balanced oracle index} 
\begin{align}
  \label{eq_BalancedOracle}
  t^{\mathfrak{b}} = t^{\mathfrak{b}}(\mu): 
  = \inf \{ t \ge 0 : B^{2}_{t}(\mu) \le V_{t} \}.
\end{align}
Due to the continuity of the functions \( t \mapsto V_{t} \) and \( t \mapsto
B^{2}_{t}(\mu) \), we have that at \( t^{\mathfrak{b}} \), squared bias and
variance balance exactly, i.e. \( B^{2}_{t^{\mathfrak{b}}}(\mu) =
V_{t^{\mathfrak{b}}} \).  
Furthermore, the balanced oracle risk is comparable to the classical oracle
risk:
The monotonicity of \( t \mapsto V_{t} \) and \( t \mapsto B^{2}_{t}(\mu) \)
yields
\begin{align}
  \label{eq_ComparabilityOfOracleRisks}
  \mathbb{E} \| \widehat{\mu}^{( t^{\mathfrak{b}} )} - \mu \|^{2} 
  =  B^{2}_{t^{b}}(\mu) + V_{t^{\mathfrak{b}}}
  \le 2 \mathbb{E} \| \widehat{\mu}^{( t^{\mathfrak{c}} )} - \mu \|^{2}
\end{align}
by distinguishing the cases \( t^{\mathfrak{c}} \le t^{\mathfrak{b}} \) and \(
t^{\mathfrak{c}} > t^{\mathfrak{b}} \). 
Assuming that the operator $ A $ and the noise level $ \delta $ are known,
knowledge of the bias is therefore enough to stop at an index at which the risk
is of the order of the classical oracle risk.

The smoothed residuals $ R^{2}_{t, \alpha} $ contain some information about the
bias:
We can write
\begin{align}
  \label{eq_InformationInTheSmoothedResiduals}
  \mathbb{E} R^{2}_{t, \alpha} 
  = B^{2}_{t, \alpha}(\mu) 
  + \sum_{i = 1}^{D} \lambda_{i}^{2 \alpha} \delta^{2} - V_{t, \alpha}, 
  \qquad t \in [ 0, D ],
\end{align}
where the \( \alpha \)-bias and the \( \alpha \)-variance
\begin{align}
  \label{eq_AlphaBias}
  B^{2}_{t, \alpha}(\mu): 
  & = ( \lceil t \rceil - t )
      \lambda_{\lceil t \rceil}^{2 + 2 \alpha} \mu_{\lceil t \rceil}^{2} 
    + \sum_{i = \lceil t \rceil + 1}^{D} 
      \lambda_{i}^{2 + 2 \alpha} \mu_{i}^{2} \\ 
  \label{eq_AlphaVariance}
  \qquad \text{ and } \qquad 
  V_{t, \alpha}: 
  & = \sum_{i = 1}^{\lfloor t \rfloor} \lambda_{i}^{2 \alpha} \delta^{2} 
    + ( t - \lfloor t \rfloor ) \lambda_{\lceil t \rceil}^{2 \alpha} \delta^{2} 
\end{align}
are smoothed versions of \( B^{2}_{t}(\mu) \) and \( V_{t} \). 
Since $ \lambda_{i} \le 1 $ for all $ i = 1, \dots, D $, the
smoothed quantities $ B^{2}_{t, \alpha} $ and $ V_{t, \alpha} $ are always
smaller than their nonsmoothed counterparts.
Analogously to $ t^{\mathfrak{b}} $, we define the $ \alpha $-\emph{balanced
oracle} 
\begin{align}
  \label{eq_AlphaBalancedOracle}
  t^{\mathfrak{b}}_{\alpha} = t^{\mathfrak{b}}_{\alpha}(\mu): 
  = \inf \{ t \ge 0: B^{2}_{t, \alpha}(\mu) \le V_{t, \alpha} \} 
\end{align}
at which the squared \( \alpha \)-bias and the $ \alpha $-variance balance.

\begin{figure}[t]
  \centering
  \includegraphics[width=\textwidth]{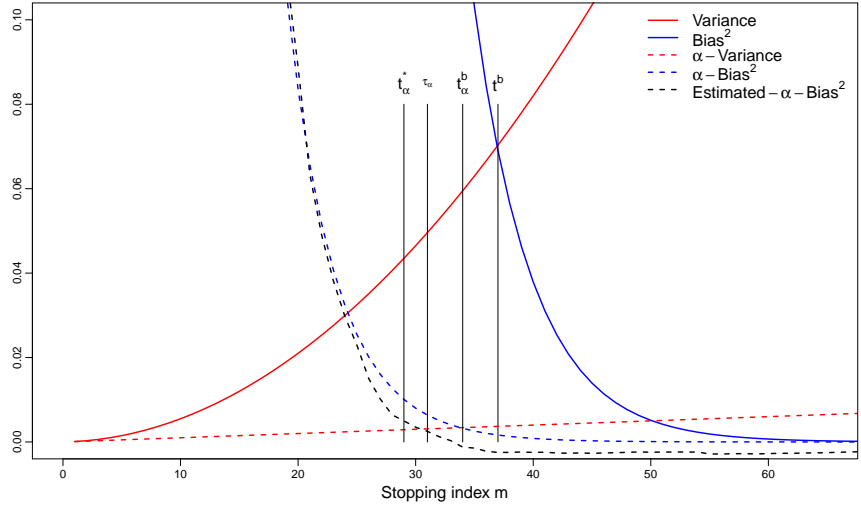}
  \caption{Bias estimation with oracle indices. Here, $ \alpha = 0 $ to ensure
  that all curves fit into one plot.}
  \label{fig_BiasEstimationWithOracleIndices}
\end{figure}

The stopping condition $ R^{2}_{m, \alpha} \le \kappa $ can be reformulated as
\begin{align}
  \widehat{B}^{2}_{m, \alpha}(\mu): 
  = R^{2}_{m, \alpha} + V_{m, \alpha} - \kappa 
  \le V_{m, \alpha},
\end{align}
which yields 
\begin{align}
  \tau_{\alpha} 
  = \inf \{ m \ge 0 : \widehat{B}^{2}_{m, \alpha}(\mu) \le V_{m, \alpha} \}.
\end{align}
Due to \eqref{eq_InformationInTheSmoothedResiduals}, \( \widehat{B}^{2}_{m,
\alpha}(\mu) \) is an unbiased estimator of \( B^{2}_{m, \alpha}(\mu) \) for \(
\kappa = \sum_{i = 1}^{D} \lambda_{i}^{2 \alpha} \delta^{2} \). 
Therefore, stopping according to $ \tau_{\alpha} $ can be understood as
estimating the $ \alpha $-bias and stopping when the estimate is smaller than
the $ \alpha $-variance.
For the specific choice of $ \kappa $ above, $ \tau_{\alpha} $ directly mimics $
t^{\mathfrak{b}}_{\alpha} $. 
For other choices of $ \kappa $, $ \tau_{\alpha} $ mimics the (\( \alpha
\)-)\emph{oracle-proxy index}
\begin{align}
  \label{eq_AlphaOracleProxy}
  t^{*}_{\alpha} 
  = t^{*}_{\alpha}(\mu): 
  = \inf \{ t \ge 0: \mathbb{E} \widehat{B}^{2}_{t, \alpha} \le V_{t, \alpha} \}  
  = \inf \{ t \ge 0: \mathbb{E} R^{2}_{t, \alpha} \le \kappa \}. 
\end{align}
This is illustrated in Figure \ref{fig_BiasEstimationWithOracleIndices}. 
The oracle-proxy index satisfies
\begin{align}
  \label{eq_ComparisonTbaTja}
  \begin{cases}
    t^{*}_{\alpha} > t^{\mathfrak{b}}_{\alpha}, & \kappa < \sum_{i = 1}^{D} \lambda_{i}^{2 \alpha} \delta^{2}, \\[3pt]
    t^{*}_{\alpha} = t^{\mathfrak{b}}_{\alpha}, & \kappa = \sum_{i = 1}^{D} \lambda_{i}^{2 \alpha} \delta^{2}, \\[3pt]
    t^{*}_{\alpha} < t^{\mathfrak{b}}_{\alpha}, & \kappa > \sum_{i = 1}^{D} \lambda_{i}^{2 \alpha} \delta^{2}.
  \end{cases}
\end{align}
Assumption \eqref{eq_AssumptionOnKappa} can therefore be
understood as a requirement on the difference between $ t^{*}_{\alpha} $ and $
t^{\mathfrak{b}}_{\alpha} $.
So far, this yields the following picture:
Approximately, \( \tau_{\alpha} \) is centred around the oracle
proxy \( t^{*}_{\alpha} \), which is close to the \( \alpha \)-balanced oracle
\( t^{\mathfrak{b}}_{\alpha} \) for an appropriate choice of \( \kappa \).
In turn, \( t^{\mathfrak{b}}_{\alpha} \) is related to the balanced oracle \(
t^{\mathfrak{b}} \) due to the connection between the bias and the variance and
their smoothed counterparts. 
Generally, we can therefore hope for adaptation as long as \(
t^{\mathfrak{b}}_{\alpha} \) and \( t^{\mathfrak{b}} \) are of the same size.

With respect to the difference between \( t^{\mathfrak{b}}_{\alpha} \) and \(
t^{\mathfrak{b}} \), we note:

\begin{lem}
  \label{lem_MonotonicityOfTheAlphaBalancedOracle}
  The mapping $ \alpha \mapsto t^{\mathfrak{b}}_{\alpha}, \alpha \ge 0  $ is
  monotonously decreasing in $ \alpha $.
  Further, $ t^{\mathfrak{b}}_{\alpha} \le t^{\mathfrak{b}} $ for all $ \alpha
  \ge 0 $.
\end{lem}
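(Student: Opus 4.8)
The plan is to deduce both claims from a single monotonicity statement obtained by extending the family in \eqref{eq_AlphaBias}--\eqref{eq_AlphaVariance} to the value $\alpha = -1$. Setting $\alpha = -1$ gives $\lambda_i^{2+2\alpha} = 1$ and $\lambda_i^{2\alpha} = \lambda_i^{-2}$, so $B^2_{t,-1}(\mu) = B^2_t(\mu)$ and $V_{t,-1} = V_t$, whence $t^{\mathfrak{b}}_{-1} = t^{\mathfrak{b}}$. It therefore suffices to show that $\alpha \mapsto t^{\mathfrak{b}}_\alpha$ is nonincreasing on $[-1,\infty)$; evaluating this between $\alpha_1 = -1$ and an arbitrary $\alpha_2 = \alpha \ge 0$ then also yields $t^{\mathfrak{b}}_\alpha \le t^{\mathfrak{b}}$. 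The case $\mu = 0$ is trivial, since then $t^{\mathfrak{b}}_\alpha = 0$ for every $\alpha$, so assume $\mu \ne 0$.

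Fix $-1 \le \alpha_1 < \alpha_2$, put $t := t^{\mathfrak{b}}_{\alpha_1}$, $k := \lceil t\rceil$ and $\gamma := 2(\alpha_2 - \alpha_1) > 0$; note $1 \le k \le D$, since $0 < t \le D$ (the lower bound because $B^2_{0,\alpha_1}(\mu) > 0 = V_{0,\alpha_1}$ and both curves are continuous). By continuity of $t \mapsto B^2_{t,\alpha_1}(\mu)$ and $t \mapsto V_{t,\alpha_1}$, and since the defining set of $t^{\mathfrak{b}}_{\alpha_1}$ in \eqref{eq_AlphaBalancedOracle} is nonempty (it contains $D$, as $B^2_{D,\alpha_1}(\mu) = 0 \le V_{D,\alpha_1}$), we have $B^2_{t,\alpha_1}(\mu) \le V_{t,\alpha_1}$ (in fact equality, exactly as in the discussion after \eqref{eq_BalancedOracle}). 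To obtain $t^{\mathfrak{b}}_{\alpha_2} \le t$ it then suffices to verify that $t$ belongs to the defining set of $t^{\mathfrak{b}}_{\alpha_2}$, i.e.\ that $B^2_{t,\alpha_2}(\mu) \le V_{t,\alpha_2}$.

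The key point is that $k$ simultaneously bounds the bias indices from below and the variance indices from above. Every index $i$ entering $B^2_{t,\alpha}(\mu)$ in \eqref{eq_AlphaBias} satisfies $i \ge k$, so $\lambda_i \le \lambda_k$ and hence $\lambda_i^\gamma \le \lambda_k^\gamma$; writing $\lambda_i^{2+2\alpha_2} = \lambda_i^{2+2\alpha_1}\lambda_i^\gamma$ in each summand yields $B^2_{t,\alpha_2}(\mu) \le \lambda_k^\gamma B^2_{t,\alpha_1}(\mu)$. Likewise, every index $i$ entering $V_{t,\alpha}$ in \eqref{eq_AlphaVariance} satisfies $i \le k$, so $\lambda_i \ge \lambda_k$ and $\lambda_i^\gamma \ge \lambda_k^\gamma$, and writing $\lambda_i^{2\alpha_2} = \lambda_i^{2\alpha_1}\lambda_i^\gamma$ gives $V_{t,\alpha_2} \ge \lambda_k^\gamma V_{t,\alpha_1}$. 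Since $\lambda_k^\gamma > 0$ and $B^2_{t,\alpha_1}(\mu) \le V_{t,\alpha_1}$, we conclude $B^2_{t,\alpha_2}(\mu) \le \lambda_k^\gamma B^2_{t,\alpha_1}(\mu) \le \lambda_k^\gamma V_{t,\alpha_1} \le V_{t,\alpha_2}$, as required.

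I expect no serious obstacle; the content is the two factoring identities together with the observation about the index split. The only care needed is bookkeeping with the floor/ceiling terms---checking that the fractional summand on each side sits exactly at $i = k$ while the remaining genuine sums run over $i \ge k+1$ (bias) and $i \le k-1$ (variance), so that the threshold at $k$ separates the two index sets cleanly even when $t$ is an integer---and justifying the balance inequality $B^2_{t,\alpha_1}(\mu) \le V_{t,\alpha_1}$ at $t = t^{\mathfrak{b}}_{\alpha_1}$ from continuity, exactly as is done for $t^{\mathfrak{b}}$ after \eqref{eq_BalancedOracle}.
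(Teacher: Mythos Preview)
Your proof is correct and essentially identical to the paper's: both rest on the chain $B^{2}_{t,\alpha_2}(\mu) \le \lambda_{\lceil t\rceil}^{2(\alpha_2-\alpha_1)} B^{2}_{t,\alpha_1}(\mu) \le \lambda_{\lceil t\rceil}^{2(\alpha_2-\alpha_1)} V_{t,\alpha_1} \le V_{t,\alpha_2}$ at $t = t^{\mathfrak{b}}_{\alpha_1}$. Your extension to $\alpha=-1$ so that $t^{\mathfrak{b}}_{-1}=t^{\mathfrak{b}}$ is a nice cosmetic unification of the two claims, but not a different argument.
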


\begin{proof}
  Let $ \alpha, \alpha' \ge 0 $ with $ \alpha \le \alpha' $.
  Then, for any $ t \in [ 0, D ] $ which satisfies $ B^{2}_{t, \alpha}(\mu) \le
  V_{t, \alpha}  $, we have
  \begin{align}
    B^{2}_{t, \alpha'}(\mu) 
    \le \lambda_{\lceil t \rceil}^{2 ( \alpha' - \alpha )} B^{2}_{t, \alpha}(\mu) 
    \le \lambda_{\lceil t \rceil}^{2 ( \alpha' - \alpha )} V_{t, \alpha} 
    \le V_{t, \alpha'}. 
  \end{align}
  Analogous reasoning yields \( t^{\mathfrak{b}}_{\alpha} \le t^{\mathfrak{b}}
  \) for all \( \alpha \ge 0 \). 
\end{proof}

\noindent Therefore, smoothing increases the difference between $
t^{\mathfrak{b}}_{\alpha} $ and $ t^{\mathfrak{b}} $ and will generally induce
smaller stopping times \( \tau_{\alpha} \). 

Under \hyperref[eq_PSD]{\( ( \text{PSD}(p, C_{A}) ) \)}, we also have essential
upper bounds for $ t^{\mathfrak{b}}_{\alpha} $ and $ t^{\mathfrak{b}}_{\alpha}
$:
For $ t^{\mathfrak{b}} $, the bounds on the size of the bias and the variance in
\eqref{eq_SizeOfTheBias} and \eqref{eq_SizeOfTheVariance} show that
\begin{align}
  \label{eq_UpperBoundForTheBalancedOracle}
  t^{\mathfrak{b}}(\mu) 
  \lesssim_{A} t^{mm}_{\beta, p, r}(\delta) 
  = ( r^{2} \delta^{- 2} )^{1 / ( 2 \beta + 2 p + 1 )} 
  \qquad \text{for all } \mu \in H^{\beta}(r, D). 
\end{align}
For $ t^{\mathfrak{b}}_{\alpha} $, analogously to \eqref{eq_SizeOfTheBias} and
\eqref{eq_SizeOfTheVariance}, we obtain
\begin{align}
  \label{eq_SizeOfTheAlphaBias}
  B^{2}_{m, \alpha}(\mu) 
  & \lesssim_{A} r^{2} m^{- ( 2 \beta + 2 p + 2 \alpha p )} 
  \qquad \text{ for all } \mu \in H^{\beta}(r, D) \\
  \label{eq_SizeOfTheAlphaVariance}
  \quad \text{ and } \quad 
  V_{m, \alpha} 
  & \sim_{A} 
    \begin{cases}
      m^{1 - 2 \alpha p} \delta^{2} / ( 1 - 2 \alpha p ), & \alpha p < 1 / 2, \\ 
      \log(m) \delta^{2},                                 & \alpha p = 1 / 2, \\ 
      \delta^{2},                                         & \alpha p > 1 / 2
    \end{cases}
\end{align}
for sufficiently large values of \( m \ge 0 \).
Given that \( t^{\mathfrak{b}}_{\alpha} \) is large enough, this gives 
the essential upper bound
\begin{align}
  \label{eq_UpperBoundForTheAlphaBalancedOracle}
  t^{\mathfrak{b}}_{\alpha}(\mu) 
  \lesssim_{A} t^{mm}_{\beta, p, r, \alpha}(\delta) 
  \qquad \text{ for all } \mu \in H^{\beta}(r, D),
\end{align}
where
\begin{align}
  \label{eq_AlphaMinimaxTruncationIndex}
  t^{mm}_{\beta, p, r, \alpha} 
  = t^{mm}_{\beta, p, r, \alpha}(\delta): 
  = \begin{cases}
      ( ( 1 - 2 \alpha p ) r^{2} \delta^{- 2} )^{1 / ( 2 \beta + 2 p + 1 )},         & \alpha p < 1 / 2, \\ 
      ( r^{2} \delta^{- 2} / \log(r^{2} \delta^{- 2}) )^{1 / ( 2 \beta + 2 p + 1 )}, & \alpha p = 1 / 2, \\
      ( r^{2} \delta^{- 2} )^{1 / ( 2 \beta + 2 p + 2 \alpha p )},                   & \alpha p > 1 / 2 
    \end{cases}
\end{align}
is the \( \alpha \)-\emph{minimax truncation} index.

For \( \alpha p < 1 / 2 \), \( t^{mm}_{\beta, p, r, \alpha} \) is of the same
order as \( t^{mm}_{\beta, p, r} \), but smoothing shrinks \( t^{mm}_{\beta, p,
r, \alpha} \) by a power of \( ( 1 - 2 \alpha p ) \). 
In the same way, we obtain that for \( \alpha p \ge 1 / 2 \), the \( \alpha
\)-balanced oracle is of order strictly smaller than the minimax-truncation
index \( t^{mm}_{\beta, p, r}(\delta) \). 
Since there are signals \( \mu \in H^{\beta}(r, D) \), for which \(
t^{\mathfrak{b}}(\mu) \sim t^{mm}_{\beta, p, r}(\delta) \), we can therefore
only expect to achieve adaptation on \( H^{\beta}(r, D) \) as long as \( \alpha
p < 1 / 2 \). 


\subsection{Main results}
\label{ssec_MainResults}

Based on the understanding of the stopping procedure developed in Sections
\ref{ssec_StructuralAssumptions} and
\ref{ssec_SmoothedResidualStoppingAsBiasEstimation}, we can now formulate our
main theorem.  It provides an oracle inequality for the risk at $ \tau_{\alpha}
$ in terms of the risk at the balanced oracle $ t^{\mathfrak{b}} $. 

\begin{thm}[Balanced oracle inequality]
  \label{thm_BalancedOracleInequality}
  Assume \hyperref[eq_PSD]{\( ( \text{PSD}(p, C_{A}) ) \)} with \( \alpha p < 1
  / 2\) and \eqref{eq_AssumptionOnKappa}. 
  Then, there exists a constant $ C_{\alpha, A, \kappa} $ depending on $ \alpha,
  p, C_{A} $ and $ C_{k} $ such that 
  \begin{align*}
    \mathbb{E} \| \widehat{\mu}^{( \tau_{\alpha} )} - \mu \|^{2} 
    \le C_{\alpha, A, \kappa} \big( 
          \mathbb{E} \| \widehat{\mu}^{( t^{\mathfrak{b}} )} - \mu \|^{2} 
        + s_{D}^{( 2 p + 1 ) / ( 1 - 2 \alpha p )} \delta^{2} 
        \big).  
  \end{align*}
  For \( t^{\mathfrak{b}} \gtrsim_{\alpha, A, \kappa} s_{D}^{1 / ( 1 - 2 \alpha
  p )} \), the risk of stopping at \( \tau_{\alpha} \) is of the order of the
  balanced-oracle risk.
\end{thm}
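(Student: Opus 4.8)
The plan is to control the risk at $\tau_\alpha$ by comparing it with the risk at $t^{\mathfrak{b}}$ through a deviation analysis of the smoothed residuals, splitting into the events $\{\tau_\alpha \le t^{\mathfrak{b}}\}$ (early stopping, where the bias is the dominant error) and $\{\tau_\alpha > t^{\mathfrak{b}}\}$ (late stopping, where the variance dominates). By the optional stopping identity \eqref{eq_OptionalStoppingIdentity}, it suffices to bound $\mathbb{E}(B^2_{\tau_\alpha}(\mu) + V_{\tau_\alpha})$. On the early-stopping event, $V_{\tau_\alpha} \le V_{t^{\mathfrak{b}}}$ by monotonicity, and $B^2_{\tau_\alpha}(\mu) \le B^2_{t^{\mathfrak{b}}}(\mu)$ is false in general, so the work there is to show that with high probability $\tau_\alpha$ cannot be much smaller than $t^{\mathfrak{b}}_\alpha$, hence $B^2_{\tau_\alpha}(\mu)$ is not much larger than $B^2_{t^{\mathfrak{b}}_\alpha}(\mu) \sim V_{t^{\mathfrak{b}}_\alpha}$, which in turn is controlled via the relation between $t^{\mathfrak{b}}_\alpha$ and $t^{\mathfrak{b}}$ (Lemma \ref{lem_MonotonicityOfTheAlphaBalancedOracle} and the size bounds \eqref{eq_SizeOfTheAlphaBias}–\eqref{eq_SizeOfTheAlphaVariance}). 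On the late-stopping event, the work is to show $\tau_\alpha$ cannot greatly exceed $t^{\mathfrak{b}}$, so that $V_{\tau_\alpha}$ stays comparable to $V_{t^{\mathfrak{b}}}$ up to the additive error term.

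The technical heart is a concentration bound for $R^2_{m,\alpha} - \mathbb{E} R^2_{m,\alpha}$, uniformly in $m$. Writing $R^2_{t,\alpha} = \sum_{i>t} \lambda_i^{2\alpha}(\lambda_i\mu_i + \delta\varepsilon_i)^2$ (schematically), the stochastic fluctuation decomposes into a quadratic-in-Gaussian part $\sum_i \lambda_i^{2\alpha}\delta^2(\varepsilon_i^2 - 1)$ and a linear cross term $\sum_i \lambda_i^{1+2\alpha}\mu_i\delta\varepsilon_i$. The quadratic part is a weighted sum of centred $\chi^2$ variables with weights $\lambda_i^{2\alpha}\delta^2$; its standard deviation is of order $s_D\delta^2$, and a Hanson–Wright / Bernstein-type tail gives deviations of order $s_D\delta^2$ times a logarithmic or polynomial factor. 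I would fix a deterministic index $\mathfrak{m}^+ := C t^{\mathfrak{b}}_\alpha$ with $C$ a large constant depending on $(\alpha, p, C_A, C_\kappa)$, and show: (i) with high probability $R^2_{\mathfrak{m}^+,\alpha} \le \kappa$, so $\tau_\alpha \le \mathfrak{m}^+$ — this uses that $\mathbb{E} R^2_{\mathfrak{m}^+,\alpha} = B^2_{\mathfrak{m}^+,\alpha}(\mu) + \sum \lambda_i^{2\alpha}\delta^2 - V_{\mathfrak{m}^+,\alpha}$ and that $B^2_{\mathfrak{m}^+,\alpha}(\mu) - V_{\mathfrak{m}^+,\alpha}$ is sufficiently negative, dominating the fluctuation plus $C_\kappa s_D\delta^2$; and (ii) $\tau_\alpha$ cannot be too small, by a union bound over $m$ below a suitable lower proxy index showing $R^2_{m,\alpha} > \kappa$ there. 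On the complementary low-probability events the crude bound $B^2_{\tau_\alpha}(\mu) + V_{\tau_\alpha} \le \|\mu\|^2 + V_D$ combined with an exponentially small probability (from the sub-exponential tails) contributes only a negligible term, provided the exponent in the tail beats $V_D = \sum \lambda_i^{-2}\delta^2$; this is where the constraint $\alpha p < 1/2$ and the choice of $C$ must be calibrated.

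Converting the high-probability index bounds into the risk bound then requires translating $\mathfrak{m}^+ \sim t^{\mathfrak{b}}_\alpha$ and the lower proxy into statements about $B^2$ and $V$ at the non-smoothed scale. Here I would use that $B^2_{\tau_\alpha}(\mu) \le B^2_{\mathfrak m^-,\alpha}(\mu)\cdot \lambda_{\mathfrak m^-}^{-2}$-type comparisons to relate smoothed and unsmoothed bias (each desmoothing costs a factor $\lambda^{-2\alpha}\sim m^{2\alpha p}$), and that $V_{\mathfrak m^+} \lesssim (\mathfrak m^+)^{2p+1}\delta^2 \lesssim (t^{\mathfrak{b}}_\alpha)^{2p+1}\delta^2$. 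Feeding in $t^{\mathfrak{b}}_\alpha \lesssim s_D^{1/(1-2\alpha p)}$ near the worst case (which is exactly the regime where $t^{\mathfrak{b}}_\alpha$ may be of order $s_D^{1/(1-2\alpha p)}$ rather than of order $t^{\mathfrak{b}}$) produces the additive term $s_D^{(2p+1)/(1-2\alpha p)}\delta^2$, while in the favourable regime $t^{\mathfrak{b}} \gtrsim s_D^{1/(1-2\alpha p)}$ one instead controls $V_{\mathfrak m^+}$ directly by $V_{t^{\mathfrak b}} \le 2\,\mathbb{E}\|\widehat\mu^{(t^{\mathfrak c})} - \mu\|^2$ via \eqref{eq_ComparabilityOfOracleRisks}. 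The final sentence of the theorem then follows by noting that when $t^{\mathfrak{b}} \gtrsim_{\alpha,A,\kappa} s_D^{1/(1-2\alpha p)}$, the additive term is dominated by $\mathbb{E}\|\widehat\mu^{(t^{\mathfrak{b}})} - \mu\|^2 \ge V_{t^{\mathfrak{b}}} \gtrsim (t^{\mathfrak{b}})^{2p+1}\delta^2 \gtrsim s_D^{(2p+1)/(1-2\alpha p)}\delta^2$. I expect the main obstacle to be the uniform-in-$m$ deviation control for the weighted $\chi^2$ sums with the right dependence on $s_D$ — in particular, ensuring the logarithmic/polynomial overshoot in the concentration bound is small enough that it does not enlarge the additive term beyond $s_D^{(2p+1)/(1-2\alpha p)}\delta^2$, and that the rare-event contribution genuinely vanishes against $V_D$.
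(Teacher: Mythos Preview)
Your overall architecture (split into bias and stochastic parts, control each via deviations of $R^2_{m,\alpha}$ from its mean using chi-square and Gaussian tails) matches the paper. However, there is a genuine gap in the stochastic/variance control: the single-threshold high-probability approach you propose does \emph{not} close. If you set $\mathfrak{m}^+ = C\max(t^{\mathfrak{b}}_\alpha, s_D^{1/(1-2\alpha p)})$ for a fixed constant $C$, then the exponent in your tail bound for $\mathbb{P}\{\tau_\alpha > \mathfrak{m}^+\}$ is of order $(\kappa - \mathbb{E}R^2_{\mathfrak{m}^+,\alpha})^2 / (s_D^2\delta^4)$, and since $\kappa - \mathbb{E}R^2_{\mathfrak{m}^+,\alpha} \sim \delta^2((\mathfrak{m}^+)^{1-2\alpha p} - (t^*_\alpha)^{1-2\alpha p}) \sim \delta^2 s_D$ at that scale, the probability is only $\exp(-c)$ for some constant~$c$ --- it does \emph{not} decay in $D$. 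Multiplying by the crude bound $V_D \sim D^{2p+1}\delta^2$ then gives an unbounded contribution, and taking $C$ to depend on $\log D$ would destroy the ``good event'' bound $V_{\mathfrak{m}^+}$. You correctly flag this as the main obstacle, but it is not a technicality: the argument as written fails here.

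The paper avoids this by never truncating at a single index. Instead it writes
\[
  \mathbb{E}(S_{\tau_\alpha} - S_{\lceil t^*_\alpha\rceil})^+
  = \delta^2 \sum_{m > \lceil t^*_\alpha\rceil} \lambda_m^{-2}\,
    \mathbb{E}\bigl(\varepsilon_m^2 \mathbf{1}\{\tau_\alpha \ge m\}\bigr)
  \le \sqrt{3}\,\delta^2 \sum_{m} \lambda_m^{-2}
      \sqrt{\mathbb{P}\{\tau_\alpha \ge m\}},
\]
bounds each $\mathbb{P}\{\tau_\alpha \ge m\}$ by an exponential in $(m^{1-2\alpha p} - \lceil t^*_\alpha\rceil^{1-2\alpha p})^2/s_D^2$, and then \emph{integrates} the resulting sum $\sum_m m^{2p}\exp(-\cdot)$ via the substitution $u = t^{1-2\alpha p}$ into a Gaussian moment, which yields exactly $(t^*_\alpha)^{2p+1} + s_D^{(2p+1)/(1-2\alpha p)}$. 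The analogous integration is done on the bias side. A second structural difference is that the paper compares $\tau_\alpha$ to the oracle-proxy $t^*_\alpha$ first (Theorem~\ref{thm_OracleProxyInequality}), where the identity $\mathbb{E}R^2_{t^*_\alpha,\alpha} = \kappa$ makes the deviation analysis clean, and only afterwards passes from $t^*_\alpha$ to $t^{\mathfrak{b}}$ by purely deterministic comparisons (Lemma~\ref{lem_DifferenceBetweenAlphaErrorTerms}, Corollary~\ref{cor_DistanceBetweenTjaTbaTb}, Proposition~\ref{prp_ComparisonOfTheOracleRisks}); this two-step route is what makes the desmoothing factor $\lambda_{\lceil t\rceil}^{-(2+2\alpha)}$ appear only at a deterministic index.
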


\noindent Theorem \ref{thm_BalancedOracleInequality} is derived in Section
\ref{sec_DerivationOfTheMainResults}. 

We comment on the result:
$ s_{D}^{( 2 p + 1 ) / ( 1 - 2 \alpha p )} \delta^{2} $ is a dimension-dependent
error term.
Since \( V_{t} \sim_{A} t^{2 p + 1} \delta^{2} \), it is of order \( V_{s_{D}^{1
/ ( 1 - 2 \alpha p )}} \).
Its existence stems from the stochastic variability of the residuals,
which is discussed in Section \ref{ssec_UndersmoothingForAlphaPLe14}. 
Since the risk at $ t^{\mathfrak{b}} $ is of the order of $ V_{t^{\mathfrak{b}}}
$, this error term is of lower order as long as
\begin{align}
  \label{eq_AbstractRangeOfAdaptation}
  t^{\mathfrak{b}} 
  \gtrsim_{\alpha, A, \kappa} s_{D}^{1 / ( 1 - 2 \alpha p )} 
  \sim_{\alpha, A, \kappa} 
  \begin{cases}
    D^{\frac{1 / 2 - 2 \alpha p}{1 - 2 \alpha p}}, & \alpha p < 1 / 4, \\ 
    ( \log D )^{2},                                & \alpha p = 1 / 4, \\ 
    1,                                             & \alpha p > 1 / 4.
  \end{cases}
\end{align}
Equation \eqref{eq_AbstractRangeOfAdaptation} determines for what signals we
can obtain optimal estimation results and shows the advantage of smoothing:
For $ \alpha = 0 $, we obtain the same result as in \citet{BlanchardEtal2018a},
i.e. we need to require $ t^{\mathfrak{b}} \gtrsim_{A, \kappa} \sqrt{D} $. 
For values $ \alpha > 0 $, this constraint is weakened and thereby
guarantees that the dimension dependent error term is of lower order for a
larger class of signals.
For $ \alpha p = 1 / 4 $, the error is only a log-term.
For $ \alpha p > 1 / 4 $, it is of constant size.

Intuitively, under our assumptions, $ \tau_{\alpha} $ behaves like $
t^{\mathfrak{b}}_{\alpha} $. 
As seen in Lemma \ref{lem_MonotonicityOfTheAlphaBalancedOracle}, $
t^{\mathfrak{b}}_{\alpha} $ is monotonously decreasing in $ \alpha $. 
While decreasing the variance, 
smoothing therefore increases the squared bias
\(B^{2}_{t^{\mathfrak{b}}_{\alpha}}(\mu) \). 
For $ \alpha p < 1 / 2 $, this results in an increase in the constant $
C_{\alpha, A, \kappa} $. 
For $ \alpha p \ge 1 / 2 $, $ t^{\mathfrak{b}}_{\alpha} $ and $ t^{\mathfrak{b}}
$ can be of different order such that the squared bias at $
t^{\mathfrak{b}}_{\alpha} $ is strictly larger than the risk at $
t^{\mathfrak{b}} $. 
Then, an oracle inequality is no longer possible. 
The details of this are further discussed in Section
\ref{ssec_OversmoothingForAlphaPGe12}. 
One of the basic assumptions in \citet{BlanchardMathe2012} is that \( A \)
is Hilbert-Schmidt.
In our setting, this is the case when \( p > 1 / 2 \), which is the exact point
when the discrepancy principle for the normal equation, i.e. \( \alpha = 1 \),
loses the optimal rate.
Therefore, the above reasoning provides a nice explanation for their
nonoptimality result.

Finally, our result directly translates to an asymptotic minimax upper bound
over the Sobolev-type ellipsoids $ H^{\beta}(r, D) $:
When $ D = D(\delta) \to \infty  $ for $ \delta \to 0 $, the risk at $
t^{\mathfrak{b}} $ is of optimal order when $ D(\delta) $ grows faster than the
minimax truncation index $ t^{mm}_{\beta, p, r}(\delta) $, see the discussion in
Section \ref{ssec_StructuralAssumptions}. 
The same is true for the dimension-dependent error as long as $ s_{D}^{1 / ( 1 -
2 \alpha p )} \lesssim_{\alpha, A, \kappa} t^{mm}_{\beta, p, r} $. 
Therefore, we obtain:

\begin{cor}[Adaptive rates for Sobolev ellipsoids]
  \label{cor_AdaptiveUpperBoundForSobolevTypeEllipsoids}
  Assume \hyperref[eq_PSD]{\( ( \text{PSD}(p, C_{A}) ) \)} with $ \alpha p < 1 /
  2 $ and \eqref{eq_AssumptionOnKappa}. 
  Then, there exists a constant $ C_{\alpha, A, \kappa} $ depending on $ \alpha,
  p, C_{A} $ and $ C_{k} $ such that 
  \begin{align*}
    \sup_{\mu \in H^{\beta}(r, D)} 
    \mathbb{E} \| \widehat{\mu}^{( \tau_{\alpha} )} - \mu \|^{2} 
    \le C_{\alpha, A, \kappa} \mathcal{R}^{*}_{\beta, p, r}(\delta) 
  \end{align*}
  for any \( \beta, r > 0 \) with \( D \gtrsim t^{mm}_{\beta, p, r}
  \gtrsim_{\alpha, A, \kappa} s_{D}^{1 / ( 1 - 2 \alpha p )} \). 
\end{cor}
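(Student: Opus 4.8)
The plan is to derive Corollary~\ref{cor_AdaptiveUpperBoundForSobolevTypeEllipsoids} directly from Theorem~\ref{thm_BalancedOracleInequality} by controlling the two terms on its right-hand side uniformly over $\mu \in H^{\beta}(r, D)$. The oracle inequality gives, for every such $\mu$,
\begin{align*}
  \mathbb{E} \| \widehat{\mu}^{( \tau_{\alpha} )} - \mu \|^{2}
  \le C_{\alpha, A, \kappa} \big(
        \mathbb{E} \| \widehat{\mu}^{( t^{\mathfrak{b}} )} - \mu \|^{2}
      + s_{D}^{( 2 p + 1 ) / ( 1 - 2 \alpha p )} \delta^{2}
      \big),
\end{align*}
so it suffices to bound each summand by a constant multiple of $\mathcal{R}^{*}_{\beta, p, r}(\delta)$.

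For the first term, I would combine \eqref{eq_ComparabilityOfOracleRisks}, which gives $\mathbb{E} \| \widehat{\mu}^{( t^{\mathfrak{b}} )} - \mu \|^{2} \le 2 \mathbb{E} \| \widehat{\mu}^{( t^{\mathfrak{c}} )} - \mu \|^{2}$, with the observation that the classical oracle risk is at most the risk of the \emph{deterministic} truncation index $\lceil t^{mm}_{\beta, p, r} \rceil$. At that index, \eqref{eq_SizeOfTheBias} bounds the squared bias by $r^{2} (t^{mm}_{\beta, p, r})^{-2\beta}$ and \eqref{eq_SizeOfTheVariance} bounds the variance by $\lesssim_{A} (t^{mm}_{\beta, p, r})^{2p+1}\delta^{2}$; plugging in the definition \eqref{eq_MinimaxTruncationIndex} of $t^{mm}_{\beta, p, r} = (r^{2}\delta^{-2})^{1/(2\beta+2p+1)}$ shows both are of order $\mathcal{R}^{*}_{\beta, p, r}(\delta) = r^{2}(r^{-2}\delta^{2})^{2\beta/(2\beta+2p+1)}$ as in \eqref{eq_MinimaxRate}. (One should note that $t^{mm}_{\beta, p, r} \le D$ here, which is guaranteed by the hypothesis $D \gtrsim t^{mm}_{\beta, p, r}$, so that the index is admissible.) This is the standard bias--variance balancing computation and carries no real difficulty.

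For the second, dimension-dependent term, the hypothesis is precisely $s_{D}^{1/(1-2\alpha p)} \lesssim_{\alpha, A, \kappa} t^{mm}_{\beta, p, r}$. Raising both sides to the power $2p+1$ and multiplying by $\delta^{2}$ gives $s_{D}^{(2p+1)/(1-2\alpha p)}\delta^{2} \lesssim_{\alpha, A, \kappa} (t^{mm}_{\beta, p, r})^{2p+1}\delta^{2}$, and by the same substitution as above the right-hand side is $\sim_{\alpha, A, \kappa} \mathcal{R}^{*}_{\beta, p, r}(\delta)$. Taking the supremum over $\mu \in H^{\beta}(r, D)$ and absorbing all constants into a single $C_{\alpha, A, \kappa}$ then yields the claim.

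The argument is essentially bookkeeping, so I do not expect a genuine obstacle; the only point requiring a little care is ensuring that the hypothesis $D \gtrsim t^{mm}_{\beta, p, r}$ is actually used where needed — both to make $\lceil t^{mm}_{\beta, p, r}\rceil$ an admissible truncation index in $[0, D]$ for the oracle-risk bound, and implicitly to justify (via the discussion in Section~\ref{ssec_StructuralAssumptions}) that $\mathcal{R}^{*}_{\beta, p, r}(\delta)$ is indeed the minimax rate in the discretised model and not merely an upper bound artifact. If one wants to be fully rigorous about the condition $t^{\mathfrak{b}}_{\alpha}$ being ``large enough'' that the bounds \eqref{eq_SizeOfTheAlphaBias}--\eqref{eq_SizeOfTheAlphaVariance} and hence \eqref{eq_UpperBoundForTheAlphaBalancedOracle} apply, one can note that this is subsumed in the regime $D \gtrsim t^{mm}_{\beta, p, r} \gtrsim_{\alpha, A, \kappa} s_{D}^{1/(1-2\alpha p)}$, since in the degenerate small-signal case the risk bound is trivial anyway.
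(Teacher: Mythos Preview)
Your proposal is correct and follows essentially the same approach as the paper. The paper's proof is the short paragraph preceding the corollary: it observes that the balanced-oracle risk is of minimax order for $\mu \in H^{\beta}(r,D)$ when $D \gtrsim t^{mm}_{\beta,p,r}$, and that the dimension-dependent term is of lower order under the hypothesis $s_{D}^{1/(1-2\alpha p)} \lesssim_{\alpha,A,\kappa} t^{mm}_{\beta,p,r}$ --- exactly your two steps, with the minor difference that you route the first bound through $t^{\mathfrak{c}}$ via \eqref{eq_ComparabilityOfOracleRisks} whereas the paper appeals directly to \eqref{eq_UpperBoundForTheBalancedOracle} and \eqref{eq_SizeOfTheVariance}.
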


\noindent By comparing the size of $ t^{mm}_{\beta, p, r} = ( r^{2} \delta^{- 2}
)^{1 / ( 2 \beta + 2 p + 1 )} $ with $ s_{D} $, Corollary
\ref{cor_AdaptiveUpperBoundForSobolevTypeEllipsoids} yields a range of
Sobolev-type
ellipsoids $ H^{\beta}(r, D) $ for which stopping according to the smoothed
residual stopping time \( \tau_{\alpha} \) is simultaneously minimax adaptive.
How this can used to choose a suitable smoothing parameter $ \alpha $ is
further discussed in Section \ref{ssec_ChoosingTheSmoothingParameterAlpha}. 



\section{Derivation of the main results}
\label{sec_DerivationOfTheMainResults}

In this section, we derive the result in Theorem
\ref{thm_BalancedOracleInequality}. 
By defining the stochastic error term
\begin{align}
  \label{eq_StochasticError}
  S_{t}: 
  = \sum_{i = 1}^{\lfloor t \rfloor} 
    \lambda_{i}^{- 2} \delta^{2} \varepsilon_{i}^{2} 
  + ( t - \lfloor t \rfloor ) \lambda_{\lceil t \rceil}^{- 2} 
    \delta^{2} \varepsilon_{\lceil t \rceil}^{2}, 
  \qquad t \in [ 0, D ], 
\end{align}
we obtain
\begin{align}
  \mathbb{E} \| \widehat{\mu}^{( t )} - \mu \|^{2} 
  & 
  = \mathbb{E} ( B^{2}_{t}(\mu) + S_{t} ), \quad t \in [ 0, D ] 
  \\ 
  \text{and } \qquad 
  \mathbb{E} \| \hat \mu^{\tau_{\alpha}} - \mu \|^{2} 
  & 
  = \mathbb{E} ( B^{2}_{\tau_{\alpha}}(\mu) + S_{\tau_{\alpha}} ). 
\end{align}
This allows to decompose the difference between the risk at the smoothed
residual stopping time \( \tau_{\alpha} \) and the risk at any deterministic
index \( t \in [ 0, D ] \) into a bias part and a stochastic part:
\begin{align}
  \label{eq_RiskDecomposition}
    \mathbb{E} \| \hat \mu^{( \tau_{\alpha} )} - \mu \|^{2} 
  - \mathbb{E} \| \hat \mu^{( t )} - \mu \|^{2}
  \le \mathbb{E} \big( B^{2}_{\tau_{\alpha}}(\mu) - B^{2}_{t}(\mu) \big)^{+} 
    + \mathbb{E} ( S_{\tau_{\alpha}} - S_{t} )^{+}. 
\end{align}

\subsection{An oracle-proxy inequality}
\label{ssec_OracleProxyInequalities}

Initially, we compare the risk at the smoothed residual stopping time \(
\tau_{\alpha} \) with the risk at the oracle-proxy index \( t^{*}_{\alpha} \). 
For the bias part in \eqref{eq_RiskDecomposition}, we can further decompose:
\begin{align}
  \label{eq_BiasErrorDecomposition}
  \mathbb{E} \big( B^{2}_{\tau_{\alpha}}(\mu) & - B^{2}_{t}(\mu) \big)^{+} 
    \le \lambda_{\lceil t \rceil}^{- ( 2 + 2 \alpha )} 
        \mathbb{E} \big( B^{2}_{\tau_{\alpha}, \alpha}(\mu) - B^{2}_{t, \alpha}(\mu) \big)^{+} \\ 
  \notag 
  & \le \lambda_{\lceil t \rceil}^{- ( 2 + 2 \alpha )}
        \Big[ 
          \mathbb{E} \big( 
            B^{2}_{\tau_{\alpha}, \alpha}(\mu) - B^{2}_{t^{*}_{\alpha}, \alpha}(\mu)
          \big)^{+} 
        + \big( 
            B^{2}_{t^{*}_{\alpha}, \alpha}(\mu) - B^{2}_{t, \alpha}(\mu)
          \big)^{+} 
        \Big].
\end{align}
In Appendix \ref{ssec_ProofAppendixForTheMainResult}, we bound the probability
\( \mathbb{P} \{ \tau_{\alpha} \le m \} \) for \( m \ge 0 \) to derive the
following estimate for the first term in the square brackets:

\begin{prp}
  \label{prp_OracleProxyInequalityForTheAlphaBias}
  For any signal \( \mu \in \mathbb{R}^{D} \), we have
  \begin{align*}
    \mathbb{E} \big( 
      B_{\tau_{\alpha}, \alpha}^{2}(\mu) - B_{t^{*}_{\alpha}, \alpha}^{2}(\mu) 
    \big)^{+} 
    \le C ( B^{2}_{t^{*}_{\alpha}, \alpha}(\mu) + s_{D} \delta^{2} ),
  \end{align*}
  where \( C \ge 1 \) is an absolute constant.
\end{prp}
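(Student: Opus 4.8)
My approach is to reduce the claim to a lower-deviation bound for the smoothed residuals below the oracle proxy and then integrate it against the bias increments. Since $\lambda_j\le 1$, the update $R^2_{m+1,\alpha}=R^2_{m,\alpha}-\lambda_{m+1}^{2\alpha}Y_{m+1}^2$ shows that $m\mapsto R^2_{m,\alpha}$ is nonincreasing, so $\{\tau_\alpha\le m\}=\{R^2_{m,\alpha}\le\kappa\}$. Writing $b_i:=\lambda_i^{2+2\alpha}\mu_i^2$, so that $-\tfrac{d}{ds}B^2_{s,\alpha}(\mu)=b_{\lceil s\rceil}$ almost everywhere, the continuity and monotonicity of $s\mapsto B^2_{s,\alpha}(\mu)$ together with $\tau_\alpha\in\mathbb{N}\cup\{0\}$ give the pathwise identity
\[
  \big(B^2_{\tau_\alpha,\alpha}(\mu)-B^2_{t^*_\alpha,\alpha}(\mu)\big)^+=\int_0^{t^*_\alpha}b_{\lceil s\rceil}\,\mathbf{1}\{\tau_\alpha\le s\}\,ds,
\]
both sides being equal to $B^2_{\tau_\alpha,\alpha}(\mu)-B^2_{t^*_\alpha,\alpha}(\mu)$ when $\tau_\alpha\le t^*_\alpha$ and vanishing otherwise. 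Taking expectations and using $\{\tau_\alpha\le s\}=\{R^2_{\lfloor s\rfloor,\alpha}\le\kappa\}$, it suffices to bound $\int_0^{t^*_\alpha}b_{\lceil s\rceil}\,\mathbb{P}\{R^2_{\lfloor s\rfloor,\alpha}\le\kappa\}\,ds$.

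For the probability bound, assume $t^*_\alpha>0$ (otherwise the left-hand side is $0$). Continuity of $t\mapsto\mathbb{E}R^2_{t,\alpha}$ gives $\mathbb{E}R^2_{t^*_\alpha,\alpha}=\kappa$, so by \eqref{eq_InformationInTheSmoothedResiduals} and the monotonicity of the $\alpha$-bias and $\alpha$-variance, for integer $m\le t^*_\alpha$
\[
  \Delta_m:=\mathbb{E}R^2_{m,\alpha}-\kappa=\big(B^2_{m,\alpha}(\mu)-B^2_{t^*_\alpha,\alpha}(\mu)\big)+\big(V_{t^*_\alpha,\alpha}-V_{m,\alpha}\big)\ge B^2_{m,\alpha}(\mu)-B^2_{t^*_\alpha,\alpha}(\mu)\ge0 .
\]
Moreover $R^2_{m,\alpha}=\sum_{j>m}\lambda_j^{2\alpha}(\lambda_j\mu_j+\delta\varepsilon_j)^2$ is a sum of independent scaled noncentral $\chi^2_1$ variables with $\mathrm{Var}(R^2_{m,\alpha})=2\delta^4\sum_{j>m}\lambda_j^{4\alpha}+4\delta^2\sum_{j>m}\lambda_j^{4\alpha+2}\mu_j^2\le s_D^2\delta^4+4\delta^2B^2_{m,\alpha}(\mu)$, using $s_D^2=2\sum_{i\le D}\lambda_i^{4\alpha}$ and $\lambda_j^{2\alpha}\le1$. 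The Laurent--Massart inequality for the lower tail of such sums (which is genuinely sub-Gaussian with variance proxy $\mathrm{Var}(R^2_{m,\alpha})$) then yields, for $\Delta_m>0$,
\[
  \mathbb{P}\{R^2_{m,\alpha}\le\kappa\}=\mathbb{P}\{R^2_{m,\alpha}-\mathbb{E}R^2_{m,\alpha}\le-\Delta_m\}\le\exp\!\Big(-c\,\frac{\Delta_m^2}{s_D^2\delta^4+\delta^2B^2_{m,\alpha}(\mu)}\Big).
\]

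To integrate, split $[0,t^*_\alpha)$ into $N=\{s<t^*_\alpha:B^2_{s,\alpha}(\mu)\le2B^2_{t^*_\alpha,\alpha}(\mu)\}$ and $F=[0,t^*_\alpha)\setminus N$; by continuity and monotonicity $N=[s_*,t^*_\alpha)$, $F=[0,s_*)$ with $B^2_{s_*,\alpha}(\mu)\le2B^2_{t^*_\alpha,\alpha}(\mu)$. On $N$ bound the probability by $1$ and use the fundamental theorem of calculus: $\int_Nb_{\lceil s\rceil}\,ds=B^2_{s_*,\alpha}(\mu)-B^2_{t^*_\alpha,\alpha}(\mu)\le B^2_{t^*_\alpha,\alpha}(\mu)$. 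On $F$ one has $B^2_{\lfloor s\rfloor,\alpha}(\mu)\ge B^2_{s,\alpha}(\mu)>2B^2_{t^*_\alpha,\alpha}(\mu)$, hence $\Delta_{\lfloor s\rfloor}\ge\tfrac12B^2_{\lfloor s\rfloor,\alpha}(\mu)\ge\tfrac12B^2_{s,\alpha}(\mu)$, so writing $w(s):=B^2_{s,\alpha}(\mu)$ the previous display gives $\mathbb{P}\{R^2_{\lfloor s\rfloor,\alpha}\le\kappa\}\le\exp\!\big(-c'\min(w(s)^2/(s_D^2\delta^4),\,w(s)/\delta^2)\big)$, and substituting $-w'(s)=b_{\lceil s\rceil}$,
\[
  \int_Fb_{\lceil s\rceil}\,\mathbb{P}\{R^2_{\lfloor s\rfloor,\alpha}\le\kappa\}\,ds\le\int_0^\infty\exp\!\Big(-c'\min\Big(\tfrac{w^2}{s_D^2\delta^4},\tfrac{w}{\delta^2}\Big)\Big)\,dw\lesssim s_D\delta^2 ,
\]
by splitting the $w$-integral at $s_D^2\delta^2$ (the Gaussian part is $\lesssim s_D\delta^2$, the exponential tail $\lesssim\delta^2\le s_D\delta^2$ since $s_D\ge\sqrt2$). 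Adding the $N$- and $F$-contributions gives $\mathbb{E}(B^2_{\tau_\alpha,\alpha}(\mu)-B^2_{t^*_\alpha,\alpha}(\mu))^+\le C\big(B^2_{t^*_\alpha,\alpha}(\mu)+s_D\delta^2\big)$.

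The one genuinely delicate step is the lower-tail concentration of the noncentral quadratic form $R^2_{m,\alpha}$ with exactly the variance proxy $s_D^2\delta^4+\delta^2B^2_{m,\alpha}(\mu)$, uniformly in $m$; this is the estimate on $\mathbb{P}\{\tau_\alpha\le m\}$ that I would formulate as a separate lemma (and is the one the text announces is carried out in the appendix). Everything else --- the pathwise identity, the split of $[0,t^*_\alpha)$, and the elementary $w$-integral --- is routine bookkeeping.
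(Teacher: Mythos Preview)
Your proof is correct and follows essentially the same route as the paper: both express the positive part as a sum/integral of bias increments weighted by $\mathbb{P}\{\tau_\alpha\le m\}=\mathbb{P}\{R^2_{m,\alpha}\le\kappa\}$, bound that probability by lower-tail concentration, and then integrate. The one point where the paper is more explicit is your ``genuinely delicate step'': rather than invoking a noncentral version of Laurent--Massart, the paper decomposes $R^2_{m,\alpha}-\mathbb{E}R^2_{m,\alpha}=\sum_{i>m}\lambda_i^{2\alpha}\delta^2(\varepsilon_i^2-1)+2\sum_{i>m}\lambda_i^{1+2\alpha}\mu_i\delta\varepsilon_i$ and applies the standard (central) Laurent--Massart bound to the first sum and the Gaussian tail bound to the second, which is exactly what your separate lemma would reduce to.
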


\noindent Plugging the bound from Proposition
\ref{prp_OracleProxyInequalityForTheAlphaBias} into
\eqref{eq_BiasErrorDecomposition} gives an inequality for the bias part in
\eqref{eq_RiskDecomposition}. 

\begin{cor}
  \label{cor_BiasInequalityForDeterministicIndices}
  For any signal \( \mu \in \mathbb{R}^{D} \) and \( t \in [ 0, D ] \), we have 
  \begin{align*}
    \mathbb{E} \big(
      B^{2}_{\tau_{\alpha}}(\mu) - B^{2}_{t}(\mu) 
    \big)^{+} 
    & \le C \lambda_{\lceil t \rceil}^{- ( 2 + 2 \alpha )}
          \big( 
            B^{2}_{t^{*}_{\alpha}, \alpha}(\mu) + s_{D} \delta^{2} 
          \big),
  \end{align*}
  where \( C \ge 1 \) is an absolute constant.
\end{cor}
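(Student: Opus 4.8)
The plan is to obtain the bound by substituting the oracle-proxy estimate of Proposition~\ref{prp_OracleProxyInequalityForTheAlphaBias} into the two-step decomposition \eqref{eq_BiasErrorDecomposition}, which is already available, and then disposing of the one leftover deterministic term by nonnegativity. Before that, it is worth recalling why \eqref{eq_BiasErrorDecomposition} holds. By monotonicity of $t \mapsto B^{2}_{t}(\mu)$, the difference $B^{2}_{\tau_{\alpha}}(\mu) - B^{2}_{t}(\mu)$ has nonzero positive part only on $\{ \tau_{\alpha} < t \}$, and there it equals $\sum_{i = \tau_{\alpha} + 1}^{\lceil t \rceil - 1} \mu_{i}^{2} + ( t - \lceil t \rceil + 1 ) \mu_{\lceil t \rceil}^{2}$, a nonnegative combination of $\mu_{i}^{2}$ over indices $i \le \lceil t \rceil$. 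Writing $\mu_{i}^{2} = \lambda_{i}^{-(2 + 2\alpha)} \lambda_{i}^{2 + 2\alpha} \mu_{i}^{2}$ and using that the singular values are nonincreasing, so $\lambda_{i}^{-(2 + 2\alpha)} \le \lambda_{\lceil t \rceil}^{-(2 + 2\alpha)}$ for every such $i$, turns this combination into $\lambda_{\lceil t \rceil}^{-(2 + 2\alpha)} \big( B^{2}_{\tau_{\alpha}, \alpha}(\mu) - B^{2}_{t, \alpha}(\mu) \big)$; this is the first inequality in \eqref{eq_BiasErrorDecomposition}. The second is the elementary $(a - c)^{+} \le (a - b)^{+} + (b - c)^{+}$ with $a = B^{2}_{\tau_{\alpha}, \alpha}(\mu)$, $b = B^{2}_{t^{*}_{\alpha}, \alpha}(\mu)$, $c = B^{2}_{t, \alpha}(\mu)$.

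Granting \eqref{eq_BiasErrorDecomposition}, the corollary is two lines. Proposition~\ref{prp_OracleProxyInequalityForTheAlphaBias} bounds the first term in the square brackets, $\mathbb{E} \big( B^{2}_{\tau_{\alpha}, \alpha}(\mu) - B^{2}_{t^{*}_{\alpha}, \alpha}(\mu) \big)^{+}$, by $C \big( B^{2}_{t^{*}_{\alpha}, \alpha}(\mu) + s_{D} \delta^{2} \big)$. The second term is controlled trivially: since $B^{2}_{t, \alpha}(\mu) \ge 0$ we have $\big( B^{2}_{t^{*}_{\alpha}, \alpha}(\mu) - B^{2}_{t, \alpha}(\mu) \big)^{+} \le B^{2}_{t^{*}_{\alpha}, \alpha}(\mu)$. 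Adding the two and pulling out the common prefactor $\lambda_{\lceil t \rceil}^{-(2 + 2\alpha)}$ gives $\mathbb{E} \big( B^{2}_{\tau_{\alpha}}(\mu) - B^{2}_{t}(\mu) \big)^{+} \le (C + 1) \lambda_{\lceil t \rceil}^{-(2 + 2\alpha)} \big( B^{2}_{t^{*}_{\alpha}, \alpha}(\mu) + s_{D} \delta^{2} \big)$, which is the claim after relabelling $C + 1$ as the absolute constant $C \ge 1$.

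I expect no genuine obstacle here: once Proposition~\ref{prp_OracleProxyInequalityForTheAlphaBias} is in hand, this corollary is a bookkeeping step that repackages a bound at the (random) oracle-proxy index $t^{*}_{\alpha}$ into one comparing against an arbitrary deterministic index $t$, which is the form needed to plug into the risk decomposition \eqref{eq_RiskDecomposition} alongside a matching estimate for the stochastic term $\mathbb{E}(S_{\tau_{\alpha}} - S_{t})^{+}$. The only point requiring mild care is that the prefactor in \eqref{eq_BiasErrorDecomposition} is $\lambda_{\lceil t \rceil}^{-(2 + 2\alpha)}$ rather than a weight depending on the running stopping index $\tau_{\alpha}$; this is precisely why one first passes to the positive part (so only indices $i \le \lceil t \rceil$ appear) and only then invokes monotonicity of the singular values, rather than comparing smoothed and unsmoothed bias at $\tau_{\alpha}$ directly, which would leave an uncontrolled random factor.
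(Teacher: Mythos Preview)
Your proof is correct and follows exactly the paper's approach: plug Proposition~\ref{prp_OracleProxyInequalityForTheAlphaBias} into the decomposition \eqref{eq_BiasErrorDecomposition} and dispose of the remaining deterministic term by nonnegativity, with your added justification of \eqref{eq_BiasErrorDecomposition} being a helpful elaboration of what the paper leaves implicit. One minor slip in your closing commentary: the oracle-proxy index $t^{*}_{\alpha}$ is deterministic (defined via $\mathbb{E} R^{2}_{t,\alpha}$), not random---the random index is $\tau_{\alpha}$.
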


In Appendix \ref{ssec_ProofAppendixForTheMainResult}, we also bound the
probability \( \mathbb{P} \{ \tau_{\alpha} \ge m \} \) for \( m \ge 0 \), which
yields the following bound for the stochastic part in
\eqref{eq_RiskDecomposition}:

\begin{prp}
  \label{prp_OracleProxyInequalityForTheStochasticError}
  Assume \hyperref[eq_PSD]{\( ( \text{PSD}(p, C_{A})) \)} with \( \alpha p < 1 /
  2\) and \eqref{eq_AssumptionOnKappa}.  
  Then, 
  \begin{align*}
    \mathbb{E} ( S_{\tau_{\alpha}} - S_{t^{*}_{\alpha}} )^{+} 
    \le C_{\alpha, A, \kappa} \big( 
          V_{t^{*}_{\alpha}} 
        + s_{D}^{( 2 p + 1 ) / ( 1 - 2 \alpha p )} \delta^{2} 
        \big),
  \end{align*}
  where \( C_{\alpha, A, \kappa} \ge 1 \) is a constant depending on \( \alpha,
  p, C_{A} \) and \( C_{\kappa} \).  
\end{prp}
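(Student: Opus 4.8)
The plan is to exploit the monotonicity of $t\mapsto S_{t}$ together with a right-tail bound for $\tau_{\alpha}$ above the oracle-proxy $t^{*}_{\alpha}$. Since $S_{t}$ is nondecreasing, $(S_{\tau_{\alpha}}-S_{t^{*}_{\alpha}})^{+}$ vanishes on $\{\tau_{\alpha}\le t^{*}_{\alpha}\}$ and is bounded by $\sum_{i=\lfloor t^{*}_{\alpha}\rfloor+1}^{\tau_{\alpha}}\lambda_{i}^{-2}\delta^{2}\varepsilon_{i}^{2}$ otherwise, so that, taking expectations and interchanging sum and expectation,
\begin{align*}
  \mathbb{E}\,(S_{\tau_{\alpha}}-S_{t^{*}_{\alpha}})^{+}
  \le\sum_{i=\lfloor t^{*}_{\alpha}\rfloor+1}^{D}\lambda_{i}^{-2}\delta^{2}\,
     \mathbb{E}\big[\varepsilon_{i}^{2}\mathbf{1}\{\tau_{\alpha}\ge i\}\big].
\end{align*}
With $\lambda_{i}^{-2}\sim_{A}i^{2p}$ from \hyperref[eq_PSD]{\( ( \text{PSD}(p, C_{A}) ) \)}, it suffices to control $\mathbb{E}[\varepsilon_{i}^{2}\mathbf{1}\{\tau_{\alpha}\ge i\}]$, which I do by splitting the sum at $\bar m:=c_{\alpha,A}\lceil t^{*}_{\alpha}\rceil+\lceil C'\,s_{D}^{1/(1-2\alpha p)}\rceil$ for suitable constants $c_{\alpha,A}$ and $C'$ (depending on $\alpha,p,C_{A},C_{\kappa}$): on the lower range $\lfloor t^{*}_{\alpha}\rfloor<i\le\bar m$ I use the crude bound $\mathbb{E}[\varepsilon_{i}^{2}\mathbf{1}\{\tau_{\alpha}\ge i\}]\le1$, while on the upper range $i>\bar m$ I use Cauchy--Schwarz, $\mathbb{E}[\varepsilon_{i}^{2}\mathbf{1}\{\tau_{\alpha}\ge i\}]\le\sqrt{3}\,\sqrt{\mathbb{P}\{\tau_{\alpha}\ge i\}}$, together with the tail estimate below.

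The core ingredient is a Bernstein-type tail bound for $\mathbb{P}\{\tau_{\alpha}\ge m\}$ valid once $m-1\ge t^{*}_{\alpha}$. There $\{\tau_{\alpha}\ge m\}=\{R^{2}_{m-1,\alpha}>\kappa\}$, and since $t\mapsto\mathbb{E}R^{2}_{t,\alpha}$ is continuous and nonincreasing with $\mathbb{E}R^{2}_{t^{*}_{\alpha},\alpha}=\kappa$, writing out this identity and using that $t\mapsto B^{2}_{t,\alpha}(\mu)$ is nonincreasing yields $\kappa-\mathbb{E}R^{2}_{m-1,\alpha}\ge V_{m-1,\alpha}-V_{t^{*}_{\alpha},\alpha}=:G_{m}\ge0$. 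Hence $\mathbb{P}\{\tau_{\alpha}\ge m\}\le\mathbb{P}\{R^{2}_{m-1,\alpha}-\mathbb{E}R^{2}_{m-1,\alpha}>G_{m}\}$, where the fluctuation $R^{2}_{m-1,\alpha}-\mathbb{E}R^{2}_{m-1,\alpha}=\sum_{j\ge m}\lambda_{j}^{2\alpha}(Y_{j}^{2}-\mathbb{E}Y_{j}^{2})$ is a centred Gaussian quadratic form of variance $2\delta^{4}\sum_{j\ge m}\lambda_{j}^{4\alpha}+4\delta^{2}\sum_{j\ge m}\lambda_{j}^{2+4\alpha}\mu_{j}^{2}$. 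The first term is $\le s_{D}^{2}\delta^{4}$; for the second, $\sum_{j\ge m}\lambda_{j}^{2+4\alpha}\mu_{j}^{2}\le\lambda_{m}^{2\alpha}B^{2}_{m-1,\alpha}(\mu)\le\lambda_{m}^{2\alpha}\big(V_{t^{*}_{\alpha},\alpha}+C_{\kappa}s_{D}\delta^{2}\big)$ by $B^{2}_{m-1,\alpha}(\mu)\le B^{2}_{t^{*}_{\alpha},\alpha}(\mu)=V_{t^{*}_{\alpha},\alpha}+\big(\kappa-\sum_{i}\lambda_{i}^{2\alpha}\delta^{2}\big)$ and \eqref{eq_AssumptionOnKappa}, and \eqref{eq_SizeOfSD}, \eqref{eq_SizeOfTheAlphaVariance} and $\alpha p<1/2$ then give $\lambda_{m}^{2\alpha}V_{t^{*}_{\alpha},\alpha}\lesssim_{\alpha,A}(t^{*}_{\alpha})^{1-4\alpha p}\delta^{2}\lesssim_{\alpha,A}s_{D}^{2}\delta^{2}$, so the variance is $\lesssim_{\alpha,A,\kappa}s_{D}^{2}\delta^{4}$; the subexponential (Bernstein) scale of the diagonal $\chi^{2}$-part is $\lambda_{m}^{2\alpha}\delta^{2}\sim_{A}m^{-2\alpha p}\delta^{2}$. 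A concentration inequality for quadratic forms in Gaussians therefore gives
\begin{align*}
  \mathbb{P}\{\tau_{\alpha}\ge m\}
  \le\exp\Big(-c_{\alpha,A,\kappa}\min\Big\{\frac{G_{m}^{2}}{s_{D}^{2}\delta^{4}},\ \frac{G_{m}}{\lambda_{m}^{2\alpha}\delta^{2}}\Big\}\Big).
\end{align*}

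To assemble the bound, on the lower range $\sum_{\lfloor t^{*}_{\alpha}\rfloor<i\le\bar m}\lambda_{i}^{-2}\delta^{2}\le V_{\bar m}\sim_{A}(\bar m)^{2p+1}\delta^{2}\lesssim_{\alpha,A,\kappa}(t^{*}_{\alpha})^{2p+1}\delta^{2}+s_{D}^{(2p+1)/(1-2\alpha p)}\delta^{2}$ by the power-mean inequality, which is of the order $V_{t^{*}_{\alpha}}+s_{D}^{(2p+1)/(1-2\alpha p)}\delta^{2}$. On the upper range, $c_{\alpha,A}$ is chosen large enough that $i\ge\bar m$ forces $i\ge2t^{*}_{\alpha}$ and hence $G_{i}\gtrsim_{\alpha,A}i^{1-2\alpha p}\delta^{2}$; since moreover $i^{1-4\alpha p}\lesssim_{A}s_{D}^{2}$ for $i\le D$, the first argument realizes the minimum in the tail bound and its exponent is $\gtrsim_{\alpha,A,\kappa}i^{2(1-2\alpha p)}/s_{D}^{2}$. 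Substituting $i=s_{D}^{1/(1-2\alpha p)}u$ then gives
\begin{align*}
  \sum_{i=\bar m+1}^{D}\lambda_{i}^{-2}\delta^{2}\,\sqrt{\mathbb{P}\{\tau_{\alpha}\ge i\}}
  \lesssim_{\alpha,A,\kappa}\delta^{2}\,s_{D}^{(2p+1)/(1-2\alpha p)}
  \int_{C'}^{\infty}u^{2p}\exp\!\big(-\tfrac12 c_{\alpha,A,\kappa}\,u^{2(1-2\alpha p)}\big)\,du,
\end{align*}
and choosing $C'$ large enough bounds the integral by an absolute constant. Adding the two contributions and recalling $V_{t^{*}_{\alpha}}\sim_{A}(t^{*}_{\alpha})^{2p+1}\delta^{2}$ proves the proposition.

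The step I expect to be the main obstacle is the tail bound, and within it the control of the $\mu$-dependent part of the variance: one has to show that $\lambda_{m}^{2\alpha}B^{2}_{m-1,\alpha}(\mu)$ stays $\lesssim_{\alpha,A,\kappa}s_{D}^{2}\delta^{2}$ uniformly over $m>t^{*}_{\alpha}$ even though $t^{*}_{\alpha}$ can itself be as large as the minimax-truncation index, and then choose the threshold $\bar m$ simultaneously small enough that the lower-range sum stays of order $V_{t^{*}_{\alpha}}+s_{D}^{(2p+1)/(1-2\alpha p)}\delta^{2}$ and large enough that $G_{i}$ dominates both the noise scale $s_{D}\delta^{2}$ and the Bernstein scale $\lambda_{m}^{2\alpha}\delta^{2}$ on the upper range. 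The hypothesis $\alpha p<1/2$ is used precisely so that $V_{t,\alpha}\sim_{\alpha,A}t^{1-2\alpha p}\delta^{2}$ still grows polynomially in $t$, which is what makes $G_{i}$ eventually overtake $s_{D}\delta^{2}$ and the tail sum summable.
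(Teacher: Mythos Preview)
Your proposal is correct and follows essentially the same approach as the paper: both control $\mathbb{E}(S_{\tau_\alpha}-S_{t^*_\alpha})^+$ via Cauchy--Schwarz and a sub-Gaussian/Bernstein tail bound for $\mathbb{P}\{\tau_\alpha\ge m\}=\mathbb{P}\{R^2_{m-1,\alpha}>\kappa\}$, exploiting the growth $V_{m-1,\alpha}-V_{t^*_\alpha,\alpha}\gtrsim_{\alpha,A}(m^{1-2\alpha p}-\lceil t^*_\alpha\rceil^{1-2\alpha p})\delta^{2}$ together with the uniform bound $\lambda_m^{2\alpha}B^2_{m-1,\alpha}(\mu)\lesssim_{\alpha,A,\kappa}s_D^{2}\delta^{2}$. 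The only differences are organizational: the paper applies Cauchy--Schwarz to every term, splits the fluctuation into a $\chi^2$ piece (Laurent--Massart) and a linear Gaussian piece, and carries the exact gap through an integral substitution, whereas you first peel off the range $i\le\bar m$ with the crude bound $\mathbb{E}[\varepsilon_i^2\mathbf{1}\{\tau_\alpha\ge i\}]\le1$ and then use the simplified gap $G_i\gtrsim i^{1-2\alpha p}\delta^{2}$ beyond it.
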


Together, Corollary \ref{cor_BiasInequalityForDeterministicIndices} and
Proposition \ref{prp_OracleProxyInequalityForTheStochasticError} show that under
a set of fairly general assumptions, the risk at the smoothed residual stopping
time \( \tau_{\alpha} \) essentially behaves like the risk at the deterministic
oracle-proxy index \( t^{*}_{\alpha} \).
Note that the result holds for all signals \( \mu \in \mathbb{R}^{D} \) and not
only for Sobolev-type ellipsoids.

\begin{thm}[Oracle-proxy inequality]
  \label{thm_OracleProxyInequality}
  Assume \hyperref[eq_PSD]{\( ( \text{PSD}(p, C_{A}) ) \)} with \( \alpha p < 1
  / 2 \) and \eqref{eq_AssumptionOnKappa}. 
  Then, there exists a constant \( C_{\alpha, A, \kappa} \) depending on \(
  \alpha, p, C_{A} \) and \( C_{\kappa} \) such that 
  \begin{align*}
    \mathbb{E} \| \hat \mu^{( \tau_{\alpha} )} - \mu \|^{2} 
    \le C_{\alpha, A, \kappa} \big( 
          \mathbb{E} \| \hat \mu^{( t^{*}_{\alpha} )} - \mu \|^{2} 
        + s_{D}^{( 2 p + 1 ) / ( 1 - 2 \alpha p )} \delta^{2}
        \big).
  \end{align*}
  For \( t^{*}_{\alpha} \gtrsim_{\alpha, A, \kappa} s_{D}^{1 / ( 1 - 2 \alpha
  p)} \), the risk at \( \tau_{\alpha} \) is of the order of the risk at \(
  t^{*}_{\alpha} \).
\end{thm}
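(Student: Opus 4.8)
The plan is to apply the risk decomposition \eqref{eq_RiskDecomposition} at the deterministic index $ t = t^{*}_{\alpha} $ and to bound the two resulting contributions using the results already established in Section \ref{ssec_OracleProxyInequalities}. The stochastic part $ \mathbb{E}(S_{\tau_{\alpha}} - S_{t^{*}_{\alpha}})^{+} $ is controlled directly by Proposition \ref{prp_OracleProxyInequalityForTheStochasticError}, and since $ V_{t^{*}_{\alpha}} \le B^{2}_{t^{*}_{\alpha}}(\mu) + V_{t^{*}_{\alpha}} = \mathbb{E}\|\widehat{\mu}^{(t^{*}_{\alpha})} - \mu\|^{2} $, this term is already of the form claimed in the theorem. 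It therefore remains to treat the bias part $ \mathbb{E}(B^{2}_{\tau_{\alpha}}(\mu) - B^{2}_{t^{*}_{\alpha}}(\mu))^{+} $, for which Corollary \ref{cor_BiasInequalityForDeterministicIndices} with $ t = t^{*}_{\alpha} $ gives the bound $ C\lambda_{\lceil t^{*}_{\alpha}\rceil}^{-(2+2\alpha)}(B^{2}_{t^{*}_{\alpha}, \alpha}(\mu) + s_{D}\delta^{2}) $.

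The first step in handling this bound is to convert the $ \alpha $-bias at $ t^{*}_{\alpha} $ into the $ \alpha $-variance. Because $ t \mapsto \mathbb{E}R^{2}_{t, \alpha} $ is continuous and non-increasing with $ t^{*}_{\alpha} = \inf\{t: \mathbb{E}R^{2}_{t, \alpha} \le \kappa\} $, identity \eqref{eq_InformationInTheSmoothedResiduals} together with Assumption \eqref{eq_AssumptionOnKappa} yields $ B^{2}_{t^{*}_{\alpha}, \alpha}(\mu) \le V_{t^{*}_{\alpha}, \alpha} + C_{\kappa}s_{D}\delta^{2} $ (with equality $ \mathbb{E}R^{2}_{t^{*}_{\alpha}, \alpha} = \kappa $ when $ t^{*}_{\alpha} > 0 $, and trivially when $ t^{*}_{\alpha} = 0 $). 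The bias part is thus at most $ C_{\kappa}\lambda_{\lceil t^{*}_{\alpha}\rceil}^{-(2+2\alpha)}(V_{t^{*}_{\alpha}, \alpha} + s_{D}\delta^{2}) $. For $ t^{*}_{\alpha} \ge 1 $, I would now invoke \hyperref[eq_PSD]{$(\text{PSD}(p, C_{A}))$} and $ \alpha p < 1/2 $ to use $ \lambda_{\lceil t^{*}_{\alpha}\rceil}^{-(2+2\alpha)} \sim_{\alpha, A} (t^{*}_{\alpha})^{(2+2\alpha)p} $, $ V_{t^{*}_{\alpha}, \alpha} \lesssim_{\alpha, A} (t^{*}_{\alpha})^{1-2\alpha p}\delta^{2} $ and $ V_{t^{*}_{\alpha}} \sim_{A} (t^{*}_{\alpha})^{2p+1}\delta^{2} $. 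The term carrying $ V_{t^{*}_{\alpha}, \alpha} $ then becomes $ \lesssim_{\alpha, A} (t^{*}_{\alpha})^{2p+1}\delta^{2} \sim_{A} V_{t^{*}_{\alpha}} $, while the remaining term $ (t^{*}_{\alpha})^{(2+2\alpha)p}s_{D}\delta^{2} $ is split by Young's inequality with conjugate exponents $ r = (2p+1)/((2+2\alpha)p) $ and $ r' = (2p+1)/(1-2\alpha p) $ into a multiple of $ (t^{*}_{\alpha})^{2p+1}\delta^{2} + s_{D}^{(2p+1)/(1-2\alpha p)}\delta^{2} \lesssim_{A} V_{t^{*}_{\alpha}} + s_{D}^{(2p+1)/(1-2\alpha p)}\delta^{2} $. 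The degenerate range $ t^{*}_{\alpha} \in (0,1) $ is disposed of crudely using $ V_{t^{*}_{\alpha}, \alpha} \le \delta^{2} $, $ \lambda_{\lceil t^{*}_{\alpha}\rceil}^{-(2+2\alpha)} = \lambda_{1}^{-(2+2\alpha)} \lesssim_{\alpha, A} 1 $ and $ 1 \lesssim_{A} s_{D} \lesssim_{\alpha, A} s_{D}^{(2p+1)/(1-2\alpha p)} $, and $ t^{*}_{\alpha} = 0 $ makes the bias part vanish. In all cases the bias part is $ \lesssim_{\alpha, A, \kappa} \mathbb{E}\|\widehat{\mu}^{(t^{*}_{\alpha})} - \mu\|^{2} + s_{D}^{(2p+1)/(1-2\alpha p)}\delta^{2} $.

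Adding the bounds for the bias and stochastic parts back into \eqref{eq_RiskDecomposition} with $ t = t^{*}_{\alpha} $ yields the asserted oracle-proxy inequality. For the final claim, observe that under \hyperref[eq_PSD]{$(\text{PSD}(p, C_{A}))$} the hypothesis $ t^{*}_{\alpha} \gtrsim_{\alpha, A, \kappa} s_{D}^{1/(1-2\alpha p)} $ forces $ V_{t^{*}_{\alpha}} \sim_{A} (t^{*}_{\alpha})^{2p+1}\delta^{2} \gtrsim_{\alpha, A, \kappa} s_{D}^{(2p+1)/(1-2\alpha p)}\delta^{2} $, so the dimension-dependent error is dominated by $ V_{t^{*}_{\alpha}} \le \mathbb{E}\|\widehat{\mu}^{(t^{*}_{\alpha})} - \mu\|^{2} $. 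Given Corollary \ref{cor_BiasInequalityForDeterministicIndices} and Proposition \ref{prp_OracleProxyInequalityForTheStochasticError}, the only genuinely delicate point is the Young-type splitting of $ (t^{*}_{\alpha})^{(2+2\alpha)p}s_{D}\delta^{2} $: one must verify that the two exponents are conjugate and, crucially, that $ \alpha p < 1/2 $ is exactly the condition making $ r > 1 $, which is where the restriction on $ \alpha $ in the statement enters. Everything else is bookkeeping with polynomial singular-value decay.
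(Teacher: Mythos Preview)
Your proof is correct and follows the same overall strategy as the paper: apply the risk decomposition \eqref{eq_RiskDecomposition} at $t=t^{*}_{\alpha}$, invoke Corollary~\ref{cor_BiasInequalityForDeterministicIndices} and Proposition~\ref{prp_OracleProxyInequalityForTheStochasticError}, and then absorb the remaining bias term. The only substantive difference is in how you handle $\lambda_{\lceil t^{*}_{\alpha}\rceil}^{-(2+2\alpha)}B^{2}_{t^{*}_{\alpha},\alpha}(\mu)$: the paper uses the pointwise inequality $\lambda_{\lceil t \rceil}^{-(2+2\alpha)}B^{2}_{t,\alpha}(\mu)\le B^{2}_{t}(\mu)$ directly, whereas you first convert $B^{2}_{t^{*}_{\alpha},\alpha}(\mu)$ to $V_{t^{*}_{\alpha},\alpha}+C_{\kappa}s_{D}\delta^{2}$ via the defining property of $t^{*}_{\alpha}$ and Assumption~\eqref{eq_AssumptionOnKappa}. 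The paper's route is slightly cleaner here because it does not need \eqref{eq_AssumptionOnKappa} for this particular term, but both lead to the same bound. Your use of Young's inequality for the cross term $(t^{*}_{\alpha})^{(2+2\alpha)p}s_{D}\delta^{2}$ is equivalent to the paper's case distinction $t^{*}_{\alpha}\lessgtr s_{D}^{1/(1-2\alpha p)}$; your observation that conjugacy of the exponents is precisely the condition $\alpha p<1/2$ is a nice way to see where the restriction enters.
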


\begin{proof}
  After plugging the inequalities from Corollary
  \ref{cor_BiasInequalityForDeterministicIndices} and Proposition
  \ref{prp_OracleProxyInequalityForTheStochasticError} into Equation
  \eqref{eq_RiskDecomposition} with \( t = t^{*}_{\alpha} \),
  only the remaining bias part has to be estimated.
  For any \( m \ge 0 \), however, we have \( \lambda_{m}^{- ( 2 + 2 \alpha )}
  B^{2}_{m, \alpha}(\mu) \le B^{2}_{m}(\mu) \). 
  This yields
  \begin{align}
    \lambda_{\lceil t^{*}_{\alpha} \rceil}^{- ( 2 + 2 \alpha )} 
    ( B^{2}_{t^{*}_{\alpha}, \alpha}(\mu) + s_{D} \delta^{2} ) 
    & 
    \lesssim_{A} B^{2}_{t^{*}_{\alpha}}(\mu) 
               + ( t^{*}_{\alpha} )^{2 p + 2 \alpha p} s_{D} \delta^{2} 
    \\ 
    \notag 
    & 
    \lesssim_{A} B^{2}_{t^{*}_{\alpha}}(\mu) + V_{t^{*}_{\alpha}} 
               + s_{D}^{( 2 p + 1 ) / ( 1 - 2 \alpha p )} \delta^{2}
  \end{align}
  by distinguishing the cases where \( t^{*}_{\alpha} \) is smaller or greater
  than \( s_{D}^{1 / ( 1 - 2 \alpha p ) } \).
\end{proof}

The proof of Proposition \ref{prp_OracleProxyInequalityForTheStochasticError}
relies on the growth of \( 
  m \mapsto V_{m, \alpha} - B^{2}_{m, \alpha}(\mu)  
\) for \( m \ge \lceil t^{*}_{\alpha} \rceil \), which can be insufficient for
\( \alpha p \ge 1 / 2 \) even if we assume that \( \mu \in H^{\beta}(r, D) \). 
This suggests that a result as in Theorem \ref{thm_OracleProxyInequality} for \(
\alpha p \ge 1 / 2 \) requires additional assumptions on the decay of \( m
\mapsto B^{2}_{m, \alpha}(\mu) \).
We note a sufficient condition from the literature, see e.g.
\citet{KindermannNeubauer2008} or \citet{SzaboEtal2015}.

\begin{rem}[Oracle-proxy inequality under polished tails]
  \label{rem_OracleProxyInequalityUnderPolishedTails}
  Assume that the signal \( \mu \) is not only an element of \( H^{\beta}(r, D)
  \) but additionally the projection onto the first $ D $ components of an
  infinite-dimensional signal $ \tilde \mu $, which 
  satisfies a \emph{polished tail condition} of the form  
  \begin{align}
    \label{eq_PolishedTailCondition}
    \sum_{i = m}^{\infty} \tilde \mu_{i}^{2} 
    \le C_{0} \sum_{i = m}^{\rho m} \tilde \mu_{i}^{2}
    \qquad \text{ for all } m \ge 1
  \end{align}
  for an integer constant \( \rho \ge 2 \) and \( C_{0} > 0  \). 
  Then, we have \( \mathbb{E} ( S_{\tau_{\alpha}} - S_{t^{*}_{\alpha}} )^{+}
  \lesssim_{A, \kappa} ( t^{*}_{\alpha} )^{2 p + 1} \delta^{2} \) also when \(
  \alpha p \ge 1 / 2 \) and \( \kappa \ge \sum_{i = 1}^{D} \lambda_{i}^{2
  \alpha} \delta^{2} \), see Proposition
  \ref{prp_ControlOfTheStochasticErrorForAlphaPGe12}(i) in Appendix
  \ref{ssec_ProofAppendixForSupplementaryResults}.
  Under this condition, we obtain \( \mathbb{E} \| \hat \mu^{( \tau_{\alpha} )}
  - \mu \| ^{2} \lesssim_{A} \mathbb{E} \| \hat \mu^{( t^{*}_{\alpha} )} - \mu
  \|^{2} \) the same way as in Theorem
  \ref{thm_OracleProxyInequality}.  
\end{rem}


\subsection{Comparison of the oracle risks}
\label{ssec_ComparisonOfTheOracleRisks}

In this section, we derive the balanced oracle inequality in Theorem
\ref{thm_BalancedOracleInequality} from the oracle-proxy inequality in Theorem
\ref{thm_OracleProxyInequality}.
We do this by comparing the different bias and variance quantities at \(
t^{*}_{\alpha}, t^{\mathfrak{b}}_{\alpha} \) and \( t^{\mathfrak{b}} \). 
Initially, we bound the difference between the \( \alpha \)-risk terms at \(
t^{*}_{\alpha} \) and \( t^{\mathfrak{b}}_{\alpha} \). 

\begin{lem}
  \label{lem_DifferenceBetweenAlphaErrorTerms}
  We have
  \begin{align*}
    ( V_{t^{*}_{\alpha} , \alpha} - V_{t^{\mathfrak{b}}_{\alpha}, \alpha} )^{+} 
    & 
    \le \Big( 
          \sum_{i = 1}^{D} \lambda_{i}^{2 \alpha} \delta^{2} - \kappa
        \Big)^{+}
    \\ 
    \text{and } \qquad 
    \big( 
      B^{2}_{t^{*}_{\alpha}, \alpha}(\mu) 
      -
      B^{2}_{t^{\mathfrak{b}}_{\alpha}, \alpha}(\mu) 
    \big)^{+} 
    & 
    \le \Big(
          \kappa - \sum_{i = 1}^{D} \lambda_{i}^{2 \alpha} \delta^{2} 
        \Big)^{+}.
  \end{align*}
\end{lem}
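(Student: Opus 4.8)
The plan is to read both oracle indices off the identity for $\mathbb{E} R^{2}_{t,\alpha}$ in \eqref{eq_InformationInTheSmoothedResiduals}. Set $h(t) := B^{2}_{t,\alpha}(\mu) - V_{t,\alpha}$ and abbreviate $c_{\alpha} := \kappa - \sum_{i=1}^{D} \lambda_{i}^{2\alpha}\delta^{2}$; then the second description of the oracle proxy in \eqref{eq_AlphaOracleProxy} reads $t^{*}_{\alpha} = \inf\{t \ge 0 : h(t) \le c_{\alpha}\}$, while by definition $t^{\mathfrak{b}}_{\alpha} = \inf\{t \ge 0 : h(t) \le 0\}$. From \eqref{eq_AlphaBias} and \eqref{eq_AlphaVariance}, $h$ is continuous and strictly decreasing on $[0,D]$ with $h(0) = B^{2}_{0,\alpha}(\mu) \ge 0$ and $h(D) = -\sum_{i=1}^{D}\lambda_{i}^{2\alpha}\delta^{2} < 0$; since $\kappa > 0$ also $h(D) \le c_{\alpha}$, so both sublevel sets are nonempty and the two infima are attained in $[0,D]$. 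I would first record the two facts used below: by the same continuity argument as for $t^{\mathfrak{b}}$ one has exact balance $B^{2}_{t^{\mathfrak{b}}_{\alpha},\alpha}(\mu) = V_{t^{\mathfrak{b}}_{\alpha},\alpha}$, and $h(t^{*}_{\alpha}) \le c_{\alpha}$, with equality whenever $t^{*}_{\alpha} > 0$.

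Then I would split into the two cases already visible in \eqref{eq_ComparisonTbaTja}. If $c_{\alpha} \ge 0$, monotonicity of $h$ gives $t^{*}_{\alpha} \le t^{\mathfrak{b}}_{\alpha}$, hence $V_{t^{*}_{\alpha},\alpha} \le V_{t^{\mathfrak{b}}_{\alpha},\alpha}$ and the first claimed bound is $0 \le 0$; for the second, the balance identity at $t^{\mathfrak{b}}_{\alpha}$ yields
\begin{align*}
  B^{2}_{t^{*}_{\alpha},\alpha}(\mu) - B^{2}_{t^{\mathfrak{b}}_{\alpha},\alpha}(\mu)
  = h(t^{*}_{\alpha}) + \big( V_{t^{*}_{\alpha},\alpha} - V_{t^{\mathfrak{b}}_{\alpha},\alpha} \big)
  \le c_{\alpha}
  = \Big( \kappa - \sum_{i=1}^{D} \lambda_{i}^{2\alpha}\delta^{2} \Big)^{+},
\end{align*}
using $h(t^{*}_{\alpha}) \le c_{\alpha}$ and $V_{t^{*}_{\alpha},\alpha} \le V_{t^{\mathfrak{b}}_{\alpha},\alpha}$. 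The case $c_{\alpha} < 0$ is symmetric: now $t^{*}_{\alpha} > t^{\mathfrak{b}}_{\alpha}$, so $B^{2}_{t^{*}_{\alpha},\alpha}(\mu) \le B^{2}_{t^{\mathfrak{b}}_{\alpha},\alpha}(\mu)$ makes the second bound trivial, and since $t^{*}_{\alpha} > 0$ we have $h(t^{*}_{\alpha}) = c_{\alpha}$, whence
\begin{align*}
  V_{t^{*}_{\alpha},\alpha} - V_{t^{\mathfrak{b}}_{\alpha},\alpha}
  = \big( B^{2}_{t^{*}_{\alpha},\alpha}(\mu) - B^{2}_{t^{\mathfrak{b}}_{\alpha},\alpha}(\mu) \big) - c_{\alpha}
  \le -c_{\alpha}
  = \Big( \sum_{i=1}^{D} \lambda_{i}^{2\alpha}\delta^{2} - \kappa \Big)^{+}.
\end{align*}
Carrying the $(\cdot)^{+}$ through both chains covers both signs of $c_{\alpha}$ simultaneously.

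There is no substantial obstacle: the statement is a one-dimensional continuity-and-monotonicity computation at the two crossing points of $h$. The only point needing care is the boundary case $t^{*}_{\alpha} = 0$, where $h(t^{*}_{\alpha}) \le c_{\alpha}$ need not be an equality; but this occurs only when $c_{\alpha} \ge 0$, and there the first chain uses precisely the inequality $h(t^{*}_{\alpha}) \le c_{\alpha}$, so nothing is lost. The analogous edge case $t^{\mathfrak{b}}_{\alpha} = 0$ still gives $B^{2}_{0,\alpha}(\mu) = 0 = V_{0,\alpha}$, so the balance identity remains valid throughout.
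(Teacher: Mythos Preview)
Your proof is correct and follows essentially the same approach as the paper: both arguments split into the cases $\kappa \gtrless \sum_{i=1}^{D}\lambda_i^{2\alpha}\delta^2$, invoke the identity \eqref{eq_InformationInTheSmoothedResiduals} at $t^{*}_{\alpha}$ together with the exact balance $B^{2}_{t^{\mathfrak{b}}_{\alpha},\alpha}(\mu) = V_{t^{\mathfrak{b}}_{\alpha},\alpha}$, and finish with the monotonicity of $t \mapsto B^{2}_{t,\alpha}(\mu)$ and $t \mapsto V_{t,\alpha}$. Your packaging via $h(t)$ and $c_{\alpha}$ is slightly more compact, and you are a bit more explicit about the boundary cases $t^{*}_{\alpha}=0$ and $t^{\mathfrak{b}}_{\alpha}=0$, but the substance is the same.
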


\begin{proof}
  For the first inequality, we assume without loss of generality that \(
  t^{\mathfrak{b}}_{\alpha} < t^{*}_{\alpha} \).
  The monotonicity of \( t \mapsto B^{2}_{t, \alpha}(\mu) \), the fact that \(
  \mathbb{E} R^{2}_{t^{*}_{\alpha}, \alpha} = \kappa \) and Equation
  \eqref{eq_InformationInTheSmoothedResiduals} yield
  \begin{align}
    V_{t^{*}_{\alpha}, \alpha} 
    & 
    =   B^{2}_{t^{*}_{\alpha}, \alpha}(\mu) 
        + 
        \sum_{i = 1}^{D} \lambda_{i}^{2 \alpha} \delta^{2} - \kappa 
    \le B^{2}_{t^{\mathfrak{b}}_{\alpha}, \alpha}(\mu) 
        + 
        \sum_{i = 1}^{D} \lambda_{i}^{2 \alpha} \delta^{2} - \kappa 
    \\ 
    \notag 
    & 
    \le V_{t^{\mathfrak{b}}_{\alpha}, \alpha}(\mu) 
        + 
        \sum_{i = 1}^{D} \lambda_{i}^{2 \alpha} \delta^{2} - \kappa. 
  \end{align}
  
  For the second inequality, we analogously assume without loss of generality
  that \( t^{*}_{\alpha} < t^{\mathfrak{b}}_{\alpha} \).
  The monotonicity of \( t \mapsto V_{t, \alpha} \), the fact that \(
  \mathbb{E} R^{2}_{t^{*}_{\alpha}, \alpha} \le \kappa \) and Equation
  \eqref{eq_InformationInTheSmoothedResiduals} then yield
  \begin{align}
    B_{t^{*}_{\alpha} , \alpha}^{2}(\mu) 
    & 
    \le V_{t^{*}_{\alpha} , \alpha} 
        +
        \kappa - \sum_{i = 1}^{D} \lambda_{i} ^{2} \delta^{2} 
    \le V_{t^{\mathfrak{b}}_{\alpha}, \alpha} 
        +
        \kappa - \sum_{i = 1}^{D} \lambda_{i} ^{2} \delta^{2} 
    \\
    \notag 
    & 
    \le B^{2}_{t^{\mathfrak{b}}_{\alpha}, \alpha}(\mu) 
        +
        \kappa - \sum_{i = 1}^{D} \lambda_{i} ^{2} \delta^{2} .
  \end{align}
\end{proof}

\noindent Under our assumptions, the first inequality in Lemma
\ref{lem_DifferenceBetweenAlphaErrorTerms} allows to bound the size of \(
t^{*}_{\alpha} \):

\begin{cor}
  \label{cor_DistanceBetweenTjaTbaTb}
  Assume \hyperref[eq_PSD]{\( ( \text{PSD}(p, C_{A}) ) \)} with $ \alpha p < 1 /
  2 $ and \eqref{eq_AssumptionOnKappa}. 
  Then,
  \begin{align*}
    t^{*}_{\alpha} \lesssim_{\alpha, A, \kappa}
    t^{\mathfrak{b}}_{\alpha} + s_{D}^{1 / ( 1 - 2 \alpha p )} 
    \le t^{\mathfrak{b}} + s_{D}^{1 / ( 1 - 2 \alpha p )}. 
  \end{align*}
\end{cor}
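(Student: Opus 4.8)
The plan is to obtain the first inequality from the first bound in Lemma~\ref{lem_DifferenceBetweenAlphaErrorTerms} together with the polynomial growth of the $ \alpha $-variance, and to read off the second inequality directly from Lemma~\ref{lem_MonotonicityOfTheAlphaBalancedOracle}. If $ t^{*}_{\alpha} \le t^{\mathfrak{b}}_{\alpha} $ there is nothing to prove, so I would assume $ t^{*}_{\alpha} > t^{\mathfrak{b}}_{\alpha} $; likewise one may assume that $ t^{*}_{\alpha} $ exceeds a fixed $ (\alpha, A) $-dependent threshold, since otherwise the claimed bound holds with that threshold absorbed into the constant. Combining the first inequality of Lemma~\ref{lem_DifferenceBetweenAlphaErrorTerms} with \eqref{eq_AssumptionOnKappa} then gives
\[
  V_{t^{*}_{\alpha}, \alpha} - V_{t^{\mathfrak{b}}_{\alpha}, \alpha}
  \le \Big( \sum_{i = 1}^{D} \lambda_{i}^{2 \alpha} \delta^{2} - \kappa \Big)^{+}
  \le C_{\kappa} s_{D} \delta^{2}.
\]

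Next I would use that under $ ( \text{PSD}(p, C_{A}) ) $ with $ \alpha p < 1/2 $ one has $ \lambda_{i}^{2 \alpha} \sim_{A} i^{- 2 \alpha p} $, so that comparing the defining sums of $ V_{t, \alpha} $ with integrals yields the two-sided estimate
\[
  c_{\alpha, A}\, t^{1 - 2 \alpha p} \delta^{2}
  \le V_{t, \alpha}
  \le C_{\alpha, A}\, ( 1 + t^{1 - 2 \alpha p} ) \delta^{2},
\]
the lower bound being valid once $ t $ is large enough. Inserting this into the previous display and dividing by $ \delta^{2} $ gives $ ( t^{*}_{\alpha} )^{1 - 2 \alpha p} \lesssim_{\alpha, A, \kappa} 1 + ( t^{\mathfrak{b}}_{\alpha} )^{1 - 2 \alpha p} + s_{D} $; since $ s_{D} \gtrsim_{\alpha, A} 1 $ by \eqref{eq_SizeOfSD}, the constant term is absorbed. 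Because $ \alpha p < 1/2 $ forces the exponent $ 1 / ( 1 - 2 \alpha p ) $ to exceed $ 1 $, raising both sides to this power and using $ ( a + b )^{c} \lesssim_{c} a^{c} + b^{c} $ for $ c \ge 1 $ produces $ t^{*}_{\alpha} \lesssim_{\alpha, A, \kappa} t^{\mathfrak{b}}_{\alpha} + s_{D}^{1 / ( 1 - 2 \alpha p )} $. The second inequality is then immediate, since $ t^{\mathfrak{b}}_{\alpha} \le t^{\mathfrak{b}} $ by Lemma~\ref{lem_MonotonicityOfTheAlphaBalancedOracle} and adding $ s_{D}^{1 / ( 1 - 2 \alpha p )} $ to both sides preserves the ordering.

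I expect the only mildly delicate point to be the two-sided control of $ V_{t, \alpha} $: its lower bound $ V_{t, \alpha} \gtrsim_{\alpha, A} t^{1 - 2 \alpha p} \delta^{2} $ only holds above an $ (\alpha, A) $-dependent threshold, so the argument must first dispose of the case of small $ t^{*}_{\alpha} $, where the inequality is trivial. The remaining ingredients — the integral comparison for the variance sums, the elementary inequality for powers of sums, and the bookkeeping of constants through the factor $ 1 / ( 1 - 2 \alpha p ) $ — are routine, and the hypothesis $ \alpha p < 1/2 $ is exactly what is needed: it is what makes $ V_{t, \alpha} $ grow like a genuine positive power of $ t $, which in turn is what lets us invert the variance bound into a bound on $ t^{*}_{\alpha} $.
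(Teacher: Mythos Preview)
Your argument is correct and follows the same overall strategy as the paper: combine the first bound of Lemma~\ref{lem_DifferenceBetweenAlphaErrorTerms} with assumption~\eqref{eq_AssumptionOnKappa} to control $V_{t^{*}_{\alpha},\alpha}-V_{t^{\mathfrak{b}}_{\alpha},\alpha}$ by $s_{D}\delta^{2}$, then use the polynomial growth of the $\alpha$-variance to translate this into a bound on $t^{*}_{\alpha}$, and finish with Lemma~\ref{lem_MonotonicityOfTheAlphaBalancedOracle}.

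The one technical difference is in how the variance growth is exploited. You bound $V_{t^{*}_{\alpha},\alpha}$ from below and $V_{t^{\mathfrak{b}}_{\alpha},\alpha}$ from above separately, which introduces an additive constant and forces you to treat small $t^{*}_{\alpha}$ as a separate trivial case. The paper instead lower-bounds the \emph{difference} directly by the integral
\[
  \delta^{-2}\bigl(V_{t^{*}_{\alpha},\alpha}-V_{t^{\mathfrak{b}}_{\alpha},\alpha}\bigr)
  \gtrsim_{A}\int_{t^{\mathfrak{b}}_{\alpha}}^{t^{*}_{\alpha}} t^{-2\alpha p}\,dt
  = \frac{(t^{*}_{\alpha})^{1-2\alpha p}-(t^{\mathfrak{b}}_{\alpha})^{1-2\alpha p}}{1-2\alpha p},
\]
which immediately yields $(t^{*}_{\alpha})^{1-2\alpha p}\lesssim_{\alpha,A,\kappa}(t^{\mathfrak{b}}_{\alpha})^{1-2\alpha p}+s_{D}$ without any case distinction or stray additive term. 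This is slightly cleaner but not materially different; both routes are sound.
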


\begin{proof}
  Under \hyperref[eq_PSD]{\( ( \text{PSD}(p, C_{A}) ) \)} with $ \alpha p < 1 /
  2 $, we have
  \begin{align}
    \delta^{- 2} ( V_{t^{*}_{\alpha}, \alpha} - V_{t_{\alpha}^{\mathfrak{b}}} )
    &
    \gtrsim_{A} 
      \int_{t^{\mathfrak{b}}_{\alpha}}^{t^{*}_{\alpha}} t^{- 2 \alpha p} \, d t 
    = \frac{
        ( t^{*}_{\alpha} )^{1 - 2 \alpha p} 
      - ( t^{\mathfrak{b}}_{\alpha})^{1 - 2 \alpha p}
      }{1 - 2 \alpha p}.  
  \end{align}
  Now, the result follows from Lemma \ref{lem_DifferenceBetweenAlphaErrorTerms}
  and assumption \eqref{eq_AssumptionOnKappa}. 
\end{proof}

We can now essentially compare the order of the risk at \( t^{*}_{\alpha}
\), \( t^{\mathfrak{b}}_{\alpha} \) and \( t^{\mathfrak{b}} \).

\begin{prp}[Comparison of the oracle risks]
  \label{prp_ComparisonOfTheOracleRisks}
  Assume \hyperref[eq_PSD]{\( ( \text{PSD}(p, C_{A}) ) \)} with $ \alpha p < 1 /
  2 $ and \eqref{eq_AssumptionOnKappa}. 
  Then,
  \begin{align*}
    \mathbb{E} \| \hat \mu^{( t^{\mathfrak{b}}_{\alpha} )} - \mu \|^{2}
    & \sim_{\alpha, A, \kappa} 
      \mathbb{E} \| \hat \mu^{( t^{\mathfrak{b}} )} - \mu \|^{2} \\ 
    \qquad \text{ and } \qquad 
    \mathbb{E} \| \hat \mu^{( t^{*}_{\alpha} )} - \mu \|^{2} 
    & \lesssim_{\alpha, A, \kappa} 
      \mathbb{E} \| \hat \mu^{( t^{\mathfrak{b}} )} - \mu \|^{2}
    + s_{D}^{( 2 p + 1 ) / ( 1 - 2 \alpha p )} \delta^{2}. 
  \end{align*}
\end{prp}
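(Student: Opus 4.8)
The goal is to compare the risks at the three oracle indices $t^{*}_{\alpha}$, $t^{\mathfrak{b}}_{\alpha}$ and $t^{\mathfrak{b}}$. The plan is to work with the bias/variance decomposition $\mathbb{E}\|\hat\mu^{(t)}-\mu\|^{2}=B^{2}_{t}(\mu)+V_{t}$ throughout and to translate between the smoothed ($\alpha$) quantities and the unsmoothed ones using the two-sided bound $\lambda_{\lceil t\rceil}^{-(2+2\alpha)}B^{2}_{t,\alpha}(\mu)\le B^{2}_{t}(\mu)$ together with $\lambda_{i}^{2\alpha}\sim_{A} i^{-2\alpha p}$ and the explicit orders of $V_{t}$, $V_{t,\alpha}$ recorded in \eqref{eq_SizeOfTheVariance} and \eqref{eq_SizeOfTheAlphaVariance}. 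First I would treat the comparison between $t^{\mathfrak{b}}_{\alpha}$ and $t^{\mathfrak{b}}$. By Lemma \ref{lem_MonotonicityOfTheAlphaBalancedOracle} we have $t^{\mathfrak{b}}_{\alpha}\le t^{\mathfrak{b}}$, so $V_{t^{\mathfrak{b}}_{\alpha}}\le V_{t^{\mathfrak{b}}}$ and, using the exact balance $B^{2}_{t^{\mathfrak{b}}_{\alpha},\alpha}(\mu)=V_{t^{\mathfrak{b}}_{\alpha},\alpha}$ at the $\alpha$-balanced oracle, the unsmoothed bias at $t^{\mathfrak{b}}_{\alpha}$ satisfies
\begin{align*}
  B^{2}_{t^{\mathfrak{b}}_{\alpha}}(\mu)
  \ge \lambda_{\lceil t^{\mathfrak{b}}_{\alpha}\rceil}^{-(2+2\alpha)}B^{2}_{t^{\mathfrak{b}}_{\alpha},\alpha}(\mu)
  = \lambda_{\lceil t^{\mathfrak{b}}_{\alpha}\rceil}^{-(2+2\alpha)}V_{t^{\mathfrak{b}}_{\alpha},\alpha},
\end{align*}
and by \hyperref[eq_PSD]{$(\text{PSD}(p,C_{A}))$} the right-hand side is $\gtrsim_{A}(t^{\mathfrak{b}}_{\alpha})^{2p+2\alpha p}\cdot(t^{\mathfrak{b}}_{\alpha})^{1-2\alpha p}\delta^{2}=(t^{\mathfrak{b}}_{\alpha})^{2p+1}\delta^{2}\sim_{A}V_{t^{\mathfrak{b}}_{\alpha}}$. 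Hence the risk at $t^{\mathfrak{b}}_{\alpha}$ is, up to constants, $\sim_{\alpha,A}(t^{\mathfrak{b}}_{\alpha})^{2p+1}\delta^{2}$. The matching lower direction is immediate since the balanced-oracle risk is minimal among balanced indices only in a limited sense, so I would instead argue directly: $\mathbb{E}\|\hat\mu^{(t^{\mathfrak{b}})}-\mu\|^{2}\sim_{A}V_{t^{\mathfrak{b}}}\sim_{A}(t^{\mathfrak{b}})^{2p+1}\delta^{2}$, and we need $(t^{\mathfrak{b}})^{2p+1}\lesssim_{\alpha,A,\kappa}(t^{\mathfrak{b}}_{\alpha})^{2p+1}$, equivalently $t^{\mathfrak{b}}\lesssim_{\alpha,A,\kappa}t^{\mathfrak{b}}_{\alpha}$. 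This is the heart of the argument: while Lemma \ref{lem_MonotonicityOfTheAlphaBalancedOracle} gives one inequality, the reverse comparison $t^{\mathfrak{b}}\lesssim t^{\mathfrak{b}}_{\alpha}$ (for $\alpha p<1/2$) must come from comparing the growth rates of $V_{t,\alpha}-B^{2}_{t,\alpha}(\mu)$ and $V_{t}-B^{2}_{t}(\mu)$ near the crossing points, using that for $t\in[t^{\mathfrak{b}}_{\alpha},t^{\mathfrak{b}}]$ one still has $B^{2}_{t,\alpha}(\mu)\ge V_{t,\alpha}$ is impossible (both are at their balance), so actually I expect this to reduce to: on $(t^{\mathfrak{b}}_{\alpha},\infty)$, $B^{2}_{t,\alpha}(\mu)\le V_{t,\alpha}$, and translating via $\lambda^{-(2+2\alpha)}$ one gets $B^{2}_{t}(\mu)\lesssim_{A} t^{2p+2\alpha p}V_{t,\alpha}\sim_{A} t^{2p+1}\delta^{2}\sim_{A}V_{t}$ for all such $t$ with $t\gtrsim_{A}1$, forcing $t^{\mathfrak{b}}\lesssim_{\alpha,A} t^{\mathfrak{b}}_{\alpha}$ up to a bounded additive constant. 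Combining both directions gives the first claimed equivalence.

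For the second inequality I would start from Corollary \ref{cor_DistanceBetweenTjaTbaTb}, which gives $t^{*}_{\alpha}\lesssim_{\alpha,A,\kappa}t^{\mathfrak{b}}_{\alpha}+s_{D}^{1/(1-2\alpha p)}\le t^{\mathfrak{b}}+s_{D}^{1/(1-2\alpha p)}$. Then
\begin{align*}
  \mathbb{E}\|\hat\mu^{(t^{*}_{\alpha})}-\mu\|^{2}
  = B^{2}_{t^{*}_{\alpha}}(\mu)+V_{t^{*}_{\alpha}}.
\end{align*}
The variance term is $V_{t^{*}_{\alpha}}\sim_{A}(t^{*}_{\alpha})^{2p+1}\delta^{2}\lesssim_{\alpha,A,\kappa}\big((t^{\mathfrak{b}})^{2p+1}+s_{D}^{(2p+1)/(1-2\alpha p)}\big)\delta^{2}\sim_{A}\mathbb{E}\|\hat\mu^{(t^{\mathfrak{b}})}-\mu\|^{2}+s_{D}^{(2p+1)/(1-2\alpha p)}\delta^{2}$, using the elementary bound $(a+b)^{2p+1}\lesssim_{p} a^{2p+1}+b^{2p+1}$. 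For the bias term, I would split into the cases $t^{*}_{\alpha}\ge t^{\mathfrak{b}}_{\alpha}$ and $t^{*}_{\alpha}<t^{\mathfrak{b}}_{\alpha}$. In the first case, monotonicity of the bias gives $B^{2}_{t^{*}_{\alpha}}(\mu)\le B^{2}_{t^{\mathfrak{b}}_{\alpha}}(\mu)$, which by the first part of the Proposition is $\lesssim_{\alpha,A,\kappa}\mathbb{E}\|\hat\mu^{(t^{\mathfrak{b}})}-\mu\|^{2}$. In the second case $t^{*}_{\alpha}<t^{\mathfrak{b}}_{\alpha}\le t^{\mathfrak{b}}$, so $B^{2}_{t^{*}_{\alpha}}(\mu)$ can only be controlled via the second inequality of Lemma \ref{lem_DifferenceBetweenAlphaErrorTerms} combined with the $\lambda^{-(2+2\alpha)}$ translation: $B^{2}_{t^{*}_{\alpha}}(\mu)\le\lambda_{\lceil t^{*}_{\alpha}\rceil}^{-(2+2\alpha)}B^{2}_{t^{*}_{\alpha},\alpha}(\mu)\le\lambda_{\lceil t^{*}_{\alpha}\rceil}^{-(2+2\alpha)}\big(B^{2}_{t^{\mathfrak{b}}_{\alpha},\alpha}(\mu)+(\kappa-\sum_{i}\lambda_{i}^{2\alpha}\delta^{2})^{+}\big)$, and since $t^{*}_{\alpha}\le t^{\mathfrak{b}}_{\alpha}$ the factor $\lambda_{\lceil t^{*}_{\alpha}\rceil}^{-(2+2\alpha)}\lesssim_{A}(t^{\mathfrak{b}}_{\alpha})^{2+2\alpha}$; using $B^{2}_{t^{\mathfrak{b}}_{\alpha},\alpha}(\mu)=V_{t^{\mathfrak{b}}_{\alpha},\alpha}$ and \eqref{eq_AssumptionOnKappa} this is $\lesssim_{\alpha,A,\kappa}(t^{\mathfrak{b}}_{\alpha})^{2p+1}\delta^{2}+ (t^{\mathfrak{b}}_{\alpha})^{2p+2\alpha p}s_{D}\delta^{2}$, and splitting on whether $t^{\mathfrak{b}}_{\alpha}\gtrsim s_{D}^{1/(1-2\alpha p)}$ bounds the second term by $(t^{\mathfrak{b}}_{\alpha})^{2p+1}\delta^{2}+s_{D}^{(2p+1)/(1-2\alpha p)}\delta^{2}$, exactly as in the proof of Theorem \ref{thm_OracleProxyInequality}. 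This again yields $\lesssim_{\alpha,A,\kappa}\mathbb{E}\|\hat\mu^{(t^{\mathfrak{b}})}-\mu\|^{2}+s_{D}^{(2p+1)/(1-2\alpha p)}\delta^{2}$.

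The main obstacle I anticipate is the reverse comparison $t^{\mathfrak{b}}\lesssim_{\alpha,A,\kappa}t^{\mathfrak{b}}_{\alpha}$ needed for the first equivalence: Lemma \ref{lem_MonotonicityOfTheAlphaBalancedOracle} only gives $t^{\mathfrak{b}}_{\alpha}\le t^{\mathfrak{b}}$, and $t^{\mathfrak{b}}_{\alpha}$ could a priori be much smaller than $t^{\mathfrak{b}}$ for special signals. The resolution is that the constraint $\alpha p<1/2$ keeps the $\alpha$-variance $V_{t,\alpha}$ genuinely polynomially growing (rather than saturating), so the translation $B^{2}_{t}(\mu)\lesssim_{A}t^{2p+2\alpha p}B^{2}_{t,\alpha}(\mu)$ never costs more than what is recovered by $V_{t}\sim_{A}t^{2p+1}\delta^{2}$ against $V_{t,\alpha}\sim_{A}t^{1-2\alpha p}\delta^{2}$; one has to be careful that all the ``$\sim_{A}$'' estimates for $V_{t,\alpha}$ only hold for $t$ sufficiently large (a bounded additive correction to both oracle indices, absorbed into the constants), and that when $\alpha p$ is close to $1/2$ the constant $1/(1-2\alpha p)$ blows up — which is why the final constant $C_{\alpha,A,\kappa}$ depends on $\alpha$. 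Once this growth-rate comparison is in place, everything else is the bookkeeping of the two cases above and the elementary inequality $(a+b)^{2p+1}\lesssim_{p}a^{2p+1}+b^{2p+1}$.
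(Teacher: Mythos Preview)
Your argument contains a genuine gap: the translation inequality you repeatedly invoke has the wrong sign. Since $\lambda_{i}\le\lambda_{\lceil t\rceil}$ for $i\ge\lceil t\rceil$, one has
\[
  B^{2}_{t,\alpha}(\mu)\le\lambda_{\lceil t\rceil}^{2+2\alpha}B^{2}_{t}(\mu),
  \qquad\text{i.e.}\qquad
  B^{2}_{t}(\mu)\ge\lambda_{\lceil t\rceil}^{-(2+2\alpha)}B^{2}_{t,\alpha}(\mu),
\]
so the estimate $B^{2}_{t}(\mu)\le\lambda_{\lceil t\rceil}^{-(2+2\alpha)}B^{2}_{t,\alpha}(\mu)$ that you use (both to conclude ``risk at $t^{\mathfrak{b}}_{\alpha}\sim(t^{\mathfrak{b}}_{\alpha})^{2p+1}\delta^{2}$'' and in the case $t^{*}_{\alpha}<t^{\mathfrak{b}}_{\alpha}$) is false in general: the tail of $B^{2}_{t}(\mu)$ can pick up indices $i\gg t$ where $\lambda_{i}^{-(2+2\alpha)}\gg\lambda_{\lceil t\rceil}^{-(2+2\alpha)}$. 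This also means your target inequality $t^{\mathfrak{b}}\lesssim_{\alpha,A,\kappa}t^{\mathfrak{b}}_{\alpha}$ is simply not true. For a signal with a single nonzero coefficient $\mu_{D}^{2}=\delta^{2}D^{(2+2\alpha)p}$ one gets $t^{\mathfrak{b}}_{\alpha}\sim 1$ while $t^{\mathfrak{b}}\sim D^{(2+2\alpha)p/(2p+1)}$; nevertheless the \emph{risks} at both indices are $\sim\mu_{D}^{2}$, so the proposition holds even though the indices are of different order.

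The paper's proof avoids this by never upper-bounding the full bias $B^{2}_{t^{\mathfrak{b}}_{\alpha}}(\mu)$ via the smoothed bias at $t^{\mathfrak{b}}_{\alpha}$. Instead it bounds only the \emph{difference}
\[
  B^{2}_{t^{\mathfrak{b}}_{\alpha}}(\mu)-B^{2}_{t^{\mathfrak{b}}}(\mu)
  =\sum_{t^{\mathfrak{b}}_{\alpha}<i\le t^{\mathfrak{b}}}\mu_{i}^{2}
  \le\lambda_{\lceil t^{\mathfrak{b}}\rceil}^{-(2+2\alpha)}
     \sum_{t^{\mathfrak{b}}_{\alpha}<i\le t^{\mathfrak{b}}}\lambda_{i}^{2+2\alpha}\mu_{i}^{2}
  \le\lambda_{\lceil t^{\mathfrak{b}}\rceil}^{-(2+2\alpha)}B^{2}_{t^{\mathfrak{b}}_{\alpha},\alpha}(\mu),
\]
where the crucial point is that the $\lambda^{-(2+2\alpha)}$-factor is taken at the \emph{larger} index $t^{\mathfrak{b}}$, so that $\lambda_{i}\ge\lambda_{\lceil t^{\mathfrak{b}}\rceil}$ on the restricted range and the inequality goes the right way. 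Then $B^{2}_{t^{\mathfrak{b}}_{\alpha},\alpha}(\mu)=V_{t^{\mathfrak{b}}_{\alpha},\alpha}\le V_{t^{\mathfrak{b}},\alpha}\lesssim_{A}(t^{\mathfrak{b}})^{1-2\alpha p}\delta^{2}$ and $\lambda_{\lceil t^{\mathfrak{b}}\rceil}^{-(2+2\alpha)}\lesssim_{A}(t^{\mathfrak{b}})^{2p+2\alpha p}$ combine to give $(t^{\mathfrak{b}})^{2p+1}\delta^{2}\sim_{A}V_{t^{\mathfrak{b}}}$. The same trick (with $t^{*}_{\alpha}$ in place of $t^{\mathfrak{b}}_{\alpha}$ and an extra $s_{D}\delta^{2}$ from Lemma~\ref{lem_DifferenceBetweenAlphaErrorTerms}) handles the bias part of the second statement. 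Finally, the direction $\mathbb{E}\|\hat\mu^{(t^{\mathfrak{b}})}-\mu\|^{2}\lesssim\mathbb{E}\|\hat\mu^{(t^{\mathfrak{b}}_{\alpha})}-\mu\|^{2}$ is immediate from \eqref{eq_ComparabilityOfOracleRisks}, so no comparison of the indices is needed there either.
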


\begin{proof}
  For the second statement, we note that, as in \eqref{eq_RiskDecomposition}, we
  can write
  \begin{align}
    \mathbb{E} \| \hat \mu^{( t^{*}_{\alpha} )} - \mu \|^{2} 
  - \mathbb{E} \| \hat \mu^{( t^{\mathfrak{b}} )} - \mu \|^{2} 
    \le \big( 
          B^{2}_{t^{*}_{\alpha}}(\mu) - B^{2}_{t^{\mathfrak{b}}}(\mu)
        \big)^{+} 
      + ( V_{t^{*}_{\alpha}} - V_{t^{\mathfrak{b}}} )^{+}. 
  \end{align}
  We treat the two terms on the right-hand side separately. 
  For the bias part, we can assume \( t^{*}_{\alpha} \le t^{\mathfrak{b}} \). 
  Analogously to \eqref{eq_BiasErrorDecomposition}, we have
  \begin{align}
    \label{eq_ComparisonOfTheOracleRisks1}
    B^{2}_{t^{*}_{\alpha}}(\mu) - B^{2}_{t^{\mathfrak{b}}}(\mu)
    & \le \lambda_{t^{\mathfrak{b}}}^{- ( 2 + 2 \alpha )} 
          B^{2}_{t^{*}_{\alpha}, \alpha}(\mu) \\ 
    \notag 
    & \le \lambda_{t^{\mathfrak{b}}}^{- ( 2 + 2 \alpha )} 
          \big( 
            V_{t^{\mathfrak{b}}, \alpha} 
          + \kappa - \sum_{i = 1}^{D} \lambda_{i}^{2 \alpha} \delta^{2}
          \big) \\ 
    \notag 
    & \lesssim_{\alpha, A, \kappa} 
          \big( 
            ( t^{\mathfrak{b}} )^{2 p + 1} 
          + ( t^{\mathfrak{b}} )^{2 p + 2 \alpha p} s_{D} 
          \big) 
          \delta^{2},
  \end{align}
  since $ V_{t^{\mathfrak{b}}, \alpha} \lesssim_{\alpha, A, \kappa} (
  t^{\mathfrak{b}} )^{1 - 2 \alpha p} \delta^{2} $. 

  For the variance part, we can assume \( t^{*}_{\alpha} \ge t^{\mathfrak{b}} \)
  and obtain
  \begin{align}
    V_{t^{*}_{\alpha}} - V_{t^{\mathfrak{b}}} 
    & 
    \le \lambda_{\lceil t^{*}_{\alpha} \rceil}^{- ( 2 + 2 \alpha )} 
        ( V_{t^{*}_{\alpha}, \alpha} - V_{t^{\mathfrak{b}}, \alpha} ) 
    \lesssim_{A, \kappa} ( t^{*}_{\alpha} )^{2 p + 2 \alpha p} s_{D} \delta^{2}  
    \\ 
    \notag 
    & 
    \lesssim_{\alpha, A, \kappa} 
      ( 
        ( t^{\mathfrak{b}} )^{2 p + 2 \alpha p} s_{D}
      + s_{D}^{( 2 p + 1 ) / ( 1 - 2 \alpha p )}
      ) 
      \delta^{2} 
  \end{align}
  using Lemma \ref{lem_DifferenceBetweenAlphaErrorTerms} and Corollary
  \ref{cor_DistanceBetweenTjaTbaTb}.
  The intended inequality now follows from
  \begin{align}
    ( t^{\mathfrak{b}} )^{2 p + 1} \delta^{2}
    \sim_{A} V_{t^{\mathfrak{b}}} 
    \sim \mathbb{E} \| \hat \mu^{( t^{\mathfrak{b}})} - \mu \|^{2} 
  \end{align}
  and distinguishing the cases where \( t^{\mathfrak{b}} \) is smaller
  or greater than \( s_{D}^{1 / ( 1 - 2 \alpha p )} \). 

  The essential inequality "$ \lesssim_{\alpha, A, \kappa} $" in the first
  statement follows by replacing $ t^{*}_{\alpha} $ with $
  t^{\mathfrak{b}}_{\alpha} $ in \eqref{eq_ComparisonOfTheOracleRisks1} and
  noting both that $ B_{t^{\mathfrak{b}}_{\alpha}, \alpha}^{2}(\mu) =
  V_{t^{\mathfrak{b}}_{\alpha}, \alpha} $ and $ V_{t^{\mathfrak{b}}_{\alpha}}
  \le V_{t^{\mathfrak{b}}} $, since $ t^{\mathfrak{b}}_{\alpha} \le
  t^{\mathfrak{b}} $. 
  The reverse direction "$ \gtrsim_{\alpha, A, \kappa} $" follows immediately
  from the fact that the risk at $ t^{\mathfrak{b}} $ is always of smaller
  order than the risk at any other $ t \in [ 0, D ] $.
\end{proof}

\noindent Together with Theorem \ref{thm_OracleProxyInequality}, Proposition
\ref{prp_ComparisonOfTheOracleRisks} yields the result in Theorem
\ref{thm_BalancedOracleInequality}.

  

\section{Constraints in terms of lower bounds}
\label{sec_ConstraintsInTermsOfLowerBounds}

\subsection{Undersmoothing for \( \alpha p \le 1 / 4 \)}
\label{ssec_UndersmoothingForAlphaPLe14}

The first constraint in Theorem \ref{thm_BalancedOracleInequality} is the 
dimension-dependent error term
\begin{align}
  s_{D}^{( 2 p + 1 ) / ( 1 - 2 \alpha p )} \delta^{2}
  \sim_{A} V_{s_{D}^{1 / ( 1 - 2 \alpha p )}}. 
\end{align}
We show that an error of this order is unavoidable:
From the identity in  \eqref{eq_OptionalStoppingIdentity} and the monotonicity
of \( t \mapsto V_{t} \), we obtain that for any \( i_{0} \in \{ 0, \dots,
D \} \),

\begin{align}
  \label{eq_GeneralVarianceLowerBound}
  \mathbb{E} \| \widehat{\mu}^{( \tau_{\alpha} )} - \mu \|^{2} 
  \ge \mathbb{E} ( V_{i_{0}} \mathbf{1} \{ \tau_{\alpha} \ge i_{0} \} )
  \ge \mathbb{P} \{ \tau_{\alpha} \ge i_{0} \} V_{i_{0}}.
\end{align}
By considering the zero signal \( \mu = 0 \), we can isolate the error, which
stems directly from the stochastic variability of the smoothed residuals.
In Appendix \ref{ssec_ProofAppendixForSupplementaryResults}, we show that for
\( \alpha p \le 1 / 4 \), we stop later than $ i_{0} = s_{D}^{1 / ( 1 - 2 \alpha
p)} $ with nonvanishing probability for \( D = D_{\delta} \to \infty \) when \(
\delta \to 0 \).
This causes a dimension-dependent error of the size \( V_{s_{D}^{1 / ( 1 - 2
\alpha p )}} \).
Since this reasoning can be extended to \( \mu \ne 0 \), we obtain:

\begin{prp}[Dimension-dependent lower bound]
  \label{prp_DimensionDependentLowerBound}
  Assume \hyperref[eq_PSD]{\( ( \text{PSD}(p, C_{A}) ) \)} with \( \alpha p \le
  1 / 4 \) and \eqref{eq_AssumptionOnKappa}. 
  Then, we have for any \( \mu \in \mathbb{R}^{D} \) that 
  \begin{align*}
    \mathbb{E} \| \widehat{\mu}^{( \tau_{\alpha} )} - \mu \|^{2} 
    \ge C s_{D}^{( 2 p + 1 ) / ( 1 - 2 \alpha p )} \delta^{2}
  \end{align*}
  with an absolute constant \( C > 0 \), provided that \( \delta \) is
  sufficiently small and \( D = D_{\delta} \to \infty \) for \( \delta \to 0 \). 
\end{prp}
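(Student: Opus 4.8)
The plan is to run the variance lower bound \eqref{eq_GeneralVarianceLowerBound} at a carefully chosen index. Set $ i_{0} := \lceil c_{0} s_{D}^{1/(1 - 2\alpha p)} \rceil $ for a small absolute constant $ c_{0} > 0 $ to be fixed below. Because $ \alpha p \le 1/4 $, \eqref{eq_SizeOfSD} gives $ s_{D} \to \infty $, and a short computation with the exponents appearing there shows $ s_{D}^{1/(1 - 2\alpha p)} = o(D) $; hence for all small $ \delta $ the index $ i_{0} $ satisfies $ 1 \le i_{0} \le D $, and \eqref{eq_SizeOfTheVariance} yields $ V_{i_{0}} \sim_{A} i_{0}^{2p+1}\delta^{2} \sim s_{D}^{(2p+1)/(1 - 2\alpha p)}\delta^{2} $. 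By \eqref{eq_GeneralVarianceLowerBound} it then suffices to show that $ \mathbb{P}\{\tau_{\alpha} \ge i_{0}\} $ is bounded below by a positive constant, uniformly over $ \mu \in \mathbb{R}^{D} $, once $ \delta $ is small.

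Using the update $ R^{2}_{m+1,\alpha} = R^{2}_{m,\alpha} - \lambda_{m+1}^{2\alpha}Y_{m+1}^{2} $, the sequence $ m \mapsto R^{2}_{m,\alpha} $ is non-increasing, so $ \{\tau_{\alpha} \ge i_{0}\} = \{R^{2}_{i_{0}-1,\alpha} > \kappa\} $. Writing $ R^{2}_{i_{0}-1,\alpha} = \sum_{i = i_{0}}^{D}\lambda_{i}^{2\alpha}(\lambda_{i}\mu_{i} + \delta\varepsilon_{i})^{2} $, its mean equals $ B^{2}_{i_{0}-1,\alpha}(\mu) + \sum_{i=1}^{D}\lambda_{i}^{2\alpha}\delta^{2} - V_{i_{0}-1,\alpha} $ by \eqref{eq_InformationInTheSmoothedResiduals}. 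Discarding the nonnegative term $ B^{2}_{i_{0}-1,\alpha}(\mu) $ and combining \eqref{eq_AssumptionOnKappa} with $ V_{i_{0}-1,\alpha} \lesssim_{A} i_{0}^{1 - 2\alpha p}\delta^{2} \le C_{A}' c_{0}^{1 - 2\alpha p} s_{D}\delta^{2} $ from \eqref{eq_SizeOfTheAlphaVariance} gives, uniformly in $ \mu $,
\[
  \kappa - \mathbb{E}R^{2}_{i_{0}-1,\alpha} \le C^{*} s_{D}\delta^{2},
  \qquad C^{*} := C_{\kappa} + C_{A}' c_{0}^{1 - 2\alpha p}.
\]
Consequently $ \mathbb{P}\{\tau_{\alpha} \ge i_{0}\} \ge \mathbb{P}\{Z > C^{*}s_{D}\delta^{2}\} $, where $ Z := R^{2}_{i_{0}-1,\alpha} - \mathbb{E}R^{2}_{i_{0}-1,\alpha} = \sum_{i=i_{0}}^{D}\bigl(\lambda_{i}^{2\alpha}\delta^{2}(\varepsilon_{i}^{2}-1) + 2\lambda_{i}^{1+2\alpha}\mu_{i}\delta\varepsilon_{i}\bigr) $.

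To dispose of the signal-dependent cross term I would symmetrise. Conditionally on $ W := (|\varepsilon_{i}|)_{i \ge i_{0}} $, the pure-noise part $ Q := \sum_{i=i_{0}}^{D}\lambda_{i}^{2\alpha}\delta^{2}(\varepsilon_{i}^{2}-1) $ is deterministic, while the cross term is a linear combination of the independent signs $ \sgn(\varepsilon_{i}) $ and hence symmetric about $ 0 $; this gives $ \mathbb{P}\{Z > a\} \ge \tfrac12\mathbb{P}\{Q > a\} $ for every $ a $, so $ \mathbb{P}\{\tau_{\alpha}\ge i_{0}\} \ge \tfrac12\mathbb{P}\{Q > C^{*}s_{D}\delta^{2}\} $ and we are reduced to the zero-signal case. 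There, $ Q $ is a centred weighted sum of the independent variables $ \varepsilon_{i}^{2}-1 $ with $ \operatorname{Var}(Q) = 2\delta^{4}\sum_{i=i_{0}}^{D}\lambda_{i}^{4\alpha} = \delta^{4}s_{D}^{2}(1 - o(1)) $, since the discarded block $ \sum_{i < i_{0}}\lambda_{i}^{4\alpha} $ is $ o(s_{D}^{2}) $ exactly because $ i_{0} = o(D) $ and $ \alpha p \le 1/4 $; the maximal weight is $ \le \delta^{2} = o(\delta^{2}s_{D}) $, so Lindeberg's condition holds and $ Q/\sqrt{\operatorname{Var}(Q)} \Rightarrow N(0,1) $. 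Hence $ \mathbb{P}\{Q > C^{*}s_{D}\delta^{2}\} = \mathbb{P}\{Q/\sqrt{\operatorname{Var}(Q)} > C^{*}(1+o(1))\} \to 1 - \Phi(C^{*}) > 0 $, and combining with \eqref{eq_GeneralVarianceLowerBound} yields $ \mathbb{E}\|\widehat{\mu}^{(\tau_{\alpha})} - \mu\|^{2} \gtrsim V_{i_{0}} \sim s_{D}^{(2p+1)/(1-2\alpha p)}\delta^{2} $ for $ \delta $ small enough.

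The main obstacle I anticipate is the passage from $ \mu = 0 $ to arbitrary $ \mu $: the centred residual $ Z $ then has a signal-dependent, skewed law with possibly very large variance, and a crude moment or Berry--Esseen argument does not obviously give a lower bound on $ \mathbb{P}\{Z > C^{*}s_{D}\delta^{2}\} $ uniformly in $ \mu $. The symmetrisation/conditioning step bypasses this cleanly, but it hinges on two points requiring care: that the threshold $ \kappa - \mathbb{E}R^{2}_{i_{0}-1,\alpha} $ stays $ \le C^{*}s_{D}\delta^{2} $ for every $ \mu $ — which works only because the $ \alpha $-bias $ B^{2}_{i_{0}-1,\alpha}(\mu) $ is nonnegative and therefore can only help — and that all the $ o(1) $ terms in $ \operatorname{Var}(Q) $ and in the Lindeberg ratio are genuinely negligible, which is precisely where the hypothesis $ \alpha p \le 1/4 $ (forcing $ s_{D}\to\infty $) is used.
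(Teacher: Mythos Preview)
Your proposal is correct and follows essentially the same route as the paper's proof: choose $i_{0}\sim s_{D}^{1/(1-2\alpha p)}$, apply the variance lower bound \eqref{eq_GeneralVarianceLowerBound}, invoke the CLT on the centred pure-noise quadratic $Q$, and reduce general $\mu$ to the $\mu=0$ case via the conditional symmetry of the cross term given $(|\varepsilon_{i}|)_{i\ge i_{0}}$. The only cosmetic difference is that the paper first treats $\mu=0$ and then extends by the same symmetrisation, whereas you handle both cases simultaneously by centering and discarding the nonnegative $\alpha$-bias.
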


\noindent The proof of Proposition \ref{prp_DimensionDependentLowerBound} shows
that decreasing the admissible order of \( | \kappa - \sum_{i = 1}^{D}
\lambda_{i}^{2 \alpha} \delta^{2} | \) beyond \( s_{D} \delta^{2} \) does not
decrease the order of the lower bound. 
At the same time, increasing the admissible order of \( | \kappa - \sum_{i =
1}^{D} \lambda_{i}^{2 \alpha} \delta^{2} | \) may increase the order of the
lower bound.  This further motivates assumption \eqref{eq_AssumptionOnKappa}. 


\subsection{Oversmoothing for \( \alpha p \ge 1 / 2 \)}
\label{ssec_OversmoothingForAlphaPGe12}

The second constraint in Theorem \ref{thm_BalancedOracleInequality} is \( \alpha
p < 1 / 2 \).   
We already anticipated in Section \ref{ssec_MainResults} that for \( \alpha p
\ge 1 / 2 \), an oracle inequality is no longer possible, since \(
t^{\mathfrak{b}}_{\alpha} \) can be of strictly smaller order than \(
t^{\mathfrak{b}} \).
We make this precise by providing a lower bound.
Analogously to \eqref{eq_GeneralVarianceLowerBound}, the monotonicity of \( t
\mapsto B^{2}_{t}(\mu) \) yields that for any \( i_{0} \in \{ 0, \dots, D \} \),
we have
\begin{align}
  \label{eq_GeneralBiasLowerBound}
  \mathbb{E} \| \widehat{\mu}^{( \tau_{\alpha} )} - \mu \|^{2} 
  \ge \mathbb{E} ( B^{2}_{i_{0}}(\mu) \mathbf{1} \{ \tau_{\alpha} \le i_{0} \} )
  \ge \mathbb{P} \{ \tau_{\alpha} \le i_{0}  \} B^{2}_{i_{0}}(\mu).
\end{align}
Intuitively, \( \tau_{\alpha} \) centres around \( t^{*}_{\alpha} \). 
Therefore, we can hope to bound the probability in
\eqref{eq_GeneralBiasLowerBound} from below against a constant when \( i_{0} \)
is of the order of \( t^{*}_{\alpha} \). 
If \( t^{*}_{\alpha} \le t^{\mathfrak{b}}_{\alpha} \), this gives a bound in
terms of \( B^{2}_{t^{\mathfrak{b}}_{\alpha}}(\mu) \).
From  \eqref{eq_ComparisonTbaTja}, we have that \( t^{*}_{\alpha} \le
t^{\mathfrak{b}}_{\alpha} \) exactly when \( \kappa \ge \sum_{i = 1}^{D}
\lambda_{i}^{2 \alpha} \delta^{2} \).
Under this assumption, we obtain:

\begin{prp}[\( \alpha \)-balanced oracle lower bounds]
  \label{prp_AlphaBalancedOracleLowerBounds}
  Assume \hyperref[eq_PSD]{\( ( \text{PSD}(p, C_{A}) ) \)} and \( 
  \kappa \ge \sum_{i = 1}^{D} \lambda_{i}^{2 \alpha} \delta^{2} \).
  Then, there exists a constant \( C_{A}' > 0 \) depending on \( p \) and \(
  C_{A} \) such that  
  \begin{align*}
    \sup_{\mu \in H^{\beta}(r, D)} 
    \mathbb{E} \| \widehat{\mu}^{( \tau_{\alpha})} - \mu \|^{2} 
    \ge C_{A}' \mathcal{R}_{\beta, r, p, \alpha}(\delta) 
  \end{align*}
  with 
  \begin{align*}
    \mathcal{R}_{\beta, r, p, \alpha}(\delta): 
    = \begin{cases}
          r^{2} \big( r^{- 2} \delta^{2} / ( 1 - 2 \alpha p ) \big)^{2 \beta / ( 2 \beta + 2 p + 1 )},
          & \alpha p < 1 / 2, \\[5pt]
          r^{2} ( 
            r^{- 2} \delta^{2} \log(r^{2} \delta^{- 2})
          )^{2 \beta / ( 2 \beta + 2 p + 1 )}, 
          & \alpha p = 1 / 2, \\[5pt]
          r^{2} ( r^{- 2} \delta^{2} )^ {2 \beta / ( 2 \beta + 2 p + 2 \alpha p )},
          & \alpha p > 1 / 2, 
        \end{cases}
  \end{align*}
  provided that \( \delta \) is sufficiently small and \( t^{mm}_{\beta, p,
  r}(\delta) = o(D) \).
\end{prp}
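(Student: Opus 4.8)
The plan is to use the general bias lower bound \eqref{eq_GeneralBiasLowerBound} with a one–sparse least favourable signal. Fix an index \( k^{*} \in \{ 1, \dots, D \} \) (to be chosen) and set \( \mu = c\, e_{k^{*}} \), where \( e_{k^{*}} \) is the \( k^{*} \)-th unit vector and \( c^{2} = r^{2} (k^{*})^{- 2 \beta} \), so that \( \mu \) lies on the boundary of \( H^{\beta}(r, D) \). For this signal the continuous bias is flat below \( k^{*} \): one has \( B^{2}_{t}(\mu) = c^{2} \) and \( B^{2}_{t, \alpha}(\mu) = \lambda_{k^{*}}^{2 + 2 \alpha} c^{2} \) for every \( t \le k^{*} - 1 \). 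Taking \( i_{0} = k^{*} - 1 \) in \eqref{eq_GeneralBiasLowerBound} reduces the statement to two tasks: showing \( c^{2} \gtrsim_{A} \mathcal{R}_{\beta, r, p, \alpha}(\delta) \), and bounding \( \mathbb{P} \{ \tau_{\alpha} \le i_{0} \} \) below by a positive absolute constant.

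For the first task I would take \( k^{*} = \lceil C_{1}\, t^{mm}_{\beta, p, r, \alpha}(\delta) \rceil \) with a constant \( C_{1} \) depending only on \( p \) and \( C_{A} \). Using the regime–dependent expression \eqref{eq_SizeOfTheAlphaVariance} for \( V_{m, \alpha} \) one checks, separately for \( \alpha p < 1 / 2 \), \( \alpha p = 1 / 2 \) and \( \alpha p > 1 / 2 \), that a large enough \( C_{1} \) makes \( V_{k^{*} - 1, \alpha} \ge \lambda_{k^{*}}^{2 + 2 \alpha} c^{2} = B^{2}_{k^{*} - 1, \alpha}(\mu) \); by monotonicity of \( t \mapsto B^{2}_{t, \alpha}(\mu) \) and \( t \mapsto V_{t, \alpha} \) this forces \( t^{\mathfrak{b}}_{\alpha}(\mu) \le k^{*} - 1 = i_{0} \). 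Inserting the three branches of \eqref{eq_AlphaMinimaxTruncationIndex} into \( c^{2} = r^{2} (k^{*})^{- 2 \beta} \) and simplifying with the defining identity of \( t^{mm}_{\beta, p, r, \alpha} \) yields exactly \( c^{2} \sim_{\alpha, A} \mathcal{R}_{\beta, r, p, \alpha}(\delta) \); moreover \( k^{*} \lesssim_{A} t^{mm}_{\beta, p, r}(\delta) = o(D) \), so \( \mu \) is admissible once \( \delta \) is small.

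The lower bound on \( \mathbb{P} \{ \tau_{\alpha} \le i_{0} \} \) is the crux. Since \( m \mapsto R^{2}_{m, \alpha} = \sum_{i > m} \lambda_{i}^{2 \alpha} Y_{i}^{2} \) is non-increasing, \( \{ \tau_{\alpha} \le i_{0} \} = \{ R^{2}_{i_{0}, \alpha} \le \kappa \} \). By \eqref{eq_InformationInTheSmoothedResiduals}, \( \mathbb{E} R^{2}_{i_{0}, \alpha} = B^{2}_{i_{0}, \alpha}(\mu) + \sum_{i = 1}^{D} \lambda_{i}^{2 \alpha} \delta^{2} - V_{i_{0}, \alpha} \le \sum_{i = 1}^{D} \lambda_{i}^{2 \alpha} \delta^{2} \le \kappa \), using \( B^{2}_{i_{0}, \alpha}(\mu) \le V_{i_{0}, \alpha} \) (i.e.\ \( i_{0} \ge t^{\mathfrak{b}}_{\alpha} \)) and the hypothesis on \( \kappa \). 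So it suffices to show \( \mathbb{P} \{ R^{2}_{i_{0}, \alpha} \le \mathbb{E} R^{2}_{i_{0}, \alpha} \} \ge c_{0} \) for an absolute constant \( c_{0} > 0 \). This is the delicate point, and the main obstacle: outside the adaptation regime \( t^{mm}_{\beta, p, r} \gtrsim_{\alpha, A, \kappa} s_{D}^{1 / ( 1 - 2 \alpha p )} \) the gap \( \kappa - \mathbb{E} R^{2}_{i_{0}, \alpha} \) need not dominate the standard deviation of \( R^{2}_{i_{0}, \alpha} \), so Chebyshev alone is insufficient. Instead I would exploit that \( R^{2}_{i_{0}, \alpha} \) is a non-negatively weighted sum of independent \( \chi^{2}_{1} \)-variables — one non-central, coming from the spike coordinate \( k^{*} \) (which also carries the largest weight \( \lambda_{k^{*}}^{2\alpha}\delta^{2} \)), the rest central — and is therefore right-skewed with median below its mean. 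Splitting off the spike coordinate, whose non-central \( \chi^{2}_{1} \)-distribution puts mass at least a universal constant on \( \{ Y_{k^{*}}^{2} \le \mathbb{E} Y_{k^{*}}^{2} \} \) (an elementary computation: the event is an interval for the underlying Gaussian that contains \( 0 \) and has length at least \( 2 \)), and controlling the remaining central part \( \sum_{i > k^{*}} \lambda_{i}^{2\alpha}\delta^{2}\varepsilon_{i}^{2} \) (by Chebyshev when its variance is small relative to its mean, and otherwise again by its dominant weight), yields such a \( c_{0} \) uniformly; this is the same type of anti-concentration estimate that underlies the probability bounds used for Propositions \ref{prp_OracleProxyInequalityForTheAlphaBias} and \ref{prp_DimensionDependentLowerBound}.

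Putting the pieces together, \eqref{eq_GeneralBiasLowerBound} gives \( \mathbb{E} \| \widehat{\mu}^{( \tau_{\alpha} )} - \mu \|^{2} \ge c_{0}\, c^{2} \ge C_{A}'\, \mathcal{R}_{\beta, r, p, \alpha}(\delta) \) with \( C_{A}' \) depending only on \( p \) and \( C_{A} \) (through \( C_{1} \) and \( c_{0} \)); taking the supremum over \( H^{\beta}(r, D) \) concludes. The spike construction covers all three ranges of \( \alpha p \) simultaneously; only the choice of \( C_{1} \) via \eqref{eq_SizeOfTheAlphaVariance} and the algebraic identification of \( c^{2} \) with \( \mathcal{R}_{\beta, r, p, \alpha}(\delta) \) are regime-specific.
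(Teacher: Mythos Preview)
Your overall strategy coincides with the paper's: a one-spike signal plus the bias lower bound \eqref{eq_GeneralBiasLowerBound}. The paper places the spike at $i_{0}+1$ with $\mu_{i_{0}+1}^{2}=\lambda_{i_{0}+1}^{-(2+2\alpha)}\sum_{i=2}^{i_{0}}\lambda_{i}^{2\alpha}\delta^{2}$, which is the same construction up to the parametrisation, and the identification of the bias with $\mathcal{R}_{\beta,r,p,\alpha}(\delta)$ is carried out regime by regime just as you describe. The divergence is entirely in how $\mathbb{P}\{\tau_{\alpha}\le i_{0}\}$ is bounded from below, and there your argument has a genuine gap.

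You reduce to $\mathbb{P}\{R^{2}_{i_{0},\alpha}\le \mathbb{E}R^{2}_{i_{0},\alpha}\}\ge c_{0}$ and propose to control the central tail $B=\sum_{i>k^{*}}\lambda_{i}^{2\alpha}\delta^{2}\varepsilon_{i}^{2}$ ``by Chebyshev when its variance is small relative to its mean, and otherwise again by its dominant weight''. Chebyshev, however, only shows that $B$ concentrates \emph{around} its mean; it gives no information about which side of the mean carries mass and therefore cannot yield $\mathbb{P}\{B\le\mathbb{E}B\}\ge c_{0}$. Worse, in the regime $\alpha p\le 1/4$ the variance $2\sum_{i>k^{*}}\lambda_{i}^{4\alpha}\delta^{4}\sim s_{D}^{2}\delta^{4}$ neither vanishes nor is dominated by a single weight, and since you have discarded the slack $\kappa-\mathbb{E}R^{2}_{i_{0},\alpha}$, neither branch of your dichotomy applies. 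The ``median below mean'' assertion for weighted $\chi^{2}$ sums may be true, but your sketch does not prove it, and it is not what underlies Propositions \ref{prp_OracleProxyInequalityForTheAlphaBias} or \ref{prp_DimensionDependentLowerBound}. The paper closes this gap with a regime split: for $\alpha p\le 1/4$ it imposes $\varepsilon_{i_{0}+1}\in[-1,0]$ together with $\sum_{i\ge i_{0}+2}\lambda_{i}^{2\alpha}(\varepsilon_{i}^{2}-1)\le 0$ and uses a Lindeberg CLT (variance $\tilde s_{D}^{2}\to\infty$) to get the latter probability $\to 1/2$; for $\alpha p>1/4$ it shows the relevant variances tend to zero, so $\mathbb{P}\{\tau_{\alpha}\le i_{0}\}\to 1$. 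Your argument can be repaired along exactly these lines, but not with the tools you name.
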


\noindent The proof is postponed to Appendix
\ref{ssec_ProofAppendixForSupplementaryResults}. 

Proposition \ref{prp_AlphaBalancedOracleLowerBounds} directly reflects
the bound on \( t^{\mathfrak{b}}_{\alpha} \) from
\eqref{eq_UpperBoundForTheAlphaBalancedOracle}.
As long as \( \alpha p < 1 / 2 \), the lower bound is of the order of the
minimax rate \( \mathcal{R}^{*}_{\beta, r, p} (\delta) \), however, we lose a
power of \( 1 / ( 1 - 2 \alpha p ) \) in the constant.
This is exactly what would be expected from
the possible loss of smoothing in the size of \( t^{\mathfrak{b}}_{\alpha} \)
deduced in \eqref{eq_UpperBoundForTheAlphaBalancedOracle}. 
Note that this result also implies that the constant in Theorem
\ref{thm_BalancedOracleInequality} grows at least this fast  in \( \alpha \). 
For \( \alpha p \ge 1 / 2 \), the balanced oracles \( t^{\mathfrak{b}}_{\alpha}
\) and \( t^{\mathfrak{b}} \) are of different order.
Since \( \tau_{\alpha} \) reflects the size of \( t^{\mathfrak{b}}_{\alpha} \)
rather than \( t^{\mathfrak{b}} \), we oversmooth and stop too early such that
rate optimal adaptation is no longer possible.

For \( \alpha = 1 \) and \( p > 1 / 2 \), the lower bound for \( \alpha p > 1 /
2 \) in Proposition \ref{prp_AlphaBalancedOracleLowerBounds} is the same rate
that \citet{BlanchardMathe2012} achieve via the discrepancy principle for the
normal equation (up to a \( \log \)-factor).
In our setting, this also is the correct rate. 
In Appendix \ref{ssec_ProofAppendixForSupplementaryResults}, we separately
control the stochastic error for \( \alpha p \ge 1 / 2 \).
We can then prove:

\begin{prp}
  \label{cor_RatesForAlphaPGe12}
  Assume \hyperref[eq_PSD]{\( ( \text{PSD}(p, C_{A}) ) \)} with \( \alpha p \ge
  1 / 2\), \( \kappa \ge \sum_{i = 1}^{D}
  \lambda_{i}^{2 \alpha} \delta^{2} \)
  and \eqref{eq_AssumptionOnKappa}.
  Then, there exists a constant \( C_{A, \kappa} \) depending on \( p, C_{A} \)
  and \( C_{\kappa} \) such that 
  \begin{align}
    \sup_{\mu \in H^{\beta}(r, D)}
    \mathbb{E} \| \widehat{\mu}^{( \tau_{\alpha} )} - \mu \|^{2} 
    \le C_{A, \kappa} \mathcal{R}_{\beta, r, p, \alpha}(\delta) 
  \end{align}
  with \( \mathcal{R}_{\beta, r, p, \alpha}(\delta) \) from Proposition
  \ref{prp_AlphaBalancedOracleLowerBounds}. 
\end{prp}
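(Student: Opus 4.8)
The plan is to follow the same two-step scheme as in the proof of Theorem~\ref{thm_BalancedOracleInequality} --- an oracle-proxy inequality followed by a comparison of oracle risks --- but now stopping the comparison at $t^{*}_{\alpha}$ instead of carrying it all the way to $t^{\mathfrak{b}}$, since for $\alpha p \ge 1/2$ the indices $t^{\mathfrak{b}}_{\alpha}$ and $t^{\mathfrak{b}}$ are genuinely of different order and only the (suboptimal) rate $\mathcal{R}_{\beta,r,p,\alpha}(\delta)$, which reflects $t^{\mathfrak{b}}_{\alpha}$, can be attained. Concretely, I would start from the decomposition \eqref{eq_RiskDecomposition} with $t = t^{*}_{\alpha}$,
\[
  \mathbb{E}\|\widehat\mu^{(\tau_{\alpha})}-\mu\|^{2}
  \le \mathbb{E}\|\widehat\mu^{(t^{*}_{\alpha})}-\mu\|^{2}
    + \mathbb{E}\big(B^{2}_{\tau_{\alpha}}(\mu)-B^{2}_{t^{*}_{\alpha}}(\mu)\big)^{+}
    + \mathbb{E}\big(S_{\tau_{\alpha}}-S_{t^{*}_{\alpha}}\big)^{+},
\]
and bound each of the three terms by $C_{A,\kappa}\,\mathcal{R}_{\beta,r,p,\alpha}(\delta)$ uniformly over $\mu\in H^{\beta}(r,D)$. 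The backbone throughout is that the assumption $\kappa\ge\sum_{i}\lambda_{i}^{2\alpha}\delta^{2}$ forces $t^{*}_{\alpha}\le t^{\mathfrak{b}}_{\alpha}$ by \eqref{eq_ComparisonTbaTja}, which combined with the upper bound $t^{\mathfrak{b}}_{\alpha}\lesssim_{A} t^{mm}_{\beta,p,r,\alpha}$ from \eqref{eq_UpperBoundForTheAlphaBalancedOracle} pins down the location of $t^{*}_{\alpha}$.

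The first two terms are essentially already available. For the deterministic risk, $\mathbb{E}\|\widehat\mu^{(t^{*}_{\alpha})}-\mu\|^{2}=B^{2}_{t^{*}_{\alpha}}(\mu)+V_{t^{*}_{\alpha}}$, the variance part satisfies $V_{t^{*}_{\alpha}}\le V_{t^{\mathfrak{b}}_{\alpha}}\lesssim_{A}(t^{mm}_{\beta,p,r,\alpha})^{2p+1}\delta^{2}$, which a direct computation shows is of smaller order than $\mathcal{R}_{\beta,r,p,\alpha}(\delta)$; the squared bias $B^{2}_{t^{*}_{\alpha}}(\mu)$ cannot be handled by monotonicity alone (as $t^{*}_{\alpha}$ may be small), so I would use the interpolation underlying \eqref{eq_SizeOfTheAlphaBias}: Lemma~\ref{lem_DifferenceBetweenAlphaErrorTerms} together with \eqref{eq_AssumptionOnKappa} gives $B^{2}_{t^{*}_{\alpha},\alpha}(\mu)\lesssim_{A,\kappa} V_{t^{\mathfrak{b}}_{\alpha},\alpha}+s_{D}\delta^{2}$, i.e. the high-frequency mass of $\mu$ is small in the $\lambda_{i}^{2+2\alpha}$-weighted norm, and combining this with the Sobolev constraint $\sum_i i^{2\beta}\mu_i^2\le r^2$ yields $B^{2}_{t^{*}_{\alpha}}(\mu)\lesssim_{A,\kappa}\mathcal{R}_{\beta,r,p,\alpha}(\delta)$ by exactly the trade-off that produces $t^{mm}_{\beta,p,r,\alpha}$ and $\mathcal{R}_{\beta,r,p,\alpha}$. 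For the bias excess, Corollary~\ref{cor_BiasInequalityForDeterministicIndices} (which holds for all $\mu$, with no restriction on $\alpha p$) bounds the second term by $C\lambda_{\lceil t^{*}_{\alpha}\rceil}^{-(2+2\alpha)}\big(B^{2}_{t^{*}_{\alpha},\alpha}(\mu)+s_{D}\delta^{2}\big)$; inserting $\lambda_{\lceil t^{*}_{\alpha}\rceil}^{-(2+2\alpha)}\sim_{A}(t^{*}_{\alpha})^{(2+2\alpha)p}$, the bound on $B^{2}_{t^{*}_{\alpha},\alpha}(\mu)$ just obtained, and $t^{*}_{\alpha}\lesssim_{A}t^{mm}_{\beta,p,r,\alpha}$, and distinguishing cases as in the proof of Theorem~\ref{thm_OracleProxyInequality}, this is again $\lesssim_{A,\kappa}\mathcal{R}_{\beta,r,p,\alpha}(\delta)$.

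The main obstacle is the stochastic excess $\mathbb{E}(S_{\tau_{\alpha}}-S_{t^{*}_{\alpha}})^{+}$, where Proposition~\ref{prp_OracleProxyInequalityForTheStochasticError} is unavailable: for $\alpha p\ge 1/2$ the map $m\mapsto V_{m,\alpha}$ grows only logarithmically ($\alpha p=1/2$) or not at all ($\alpha p>1/2$), so the barrier $V_{m,\alpha}-B^{2}_{m,\alpha}(\mu)$ that prevents $\tau_{\alpha}$ from overshooting $t^{*}_{\alpha}$ is too weak for the generic argument. The fix is a dedicated tail bound on $\mathbb{P}\{\tau_{\alpha}\ge m\}$ (this is the separate stochastic-error control referenced before the statement, the Sobolev analogue of Proposition~\ref{prp_ControlOfTheStochasticErrorForAlphaPGe12}(i)): the event $\{\tau_{\alpha}\ge m\}$ forces $\sum_{i>m}\lambda_{i}^{2\alpha}(Y_{i}^{2}-\delta^{2})>\kappa-\sum_{i>m}\lambda_{i}^{2\alpha}\delta^{2}\ge V_{m,\alpha}-C_{\kappa}s_{D}\delta^{2}$, and under \hyperref[eq_PSD]{\( ( \text{PSD}(p, C_{A}) ) \)} with $4\alpha p\ge 2$ the standard deviation of the left-hand side is of order $m^{1/2-2\alpha p}\delta^{2}\to 0$, while the Sobolev decay $B^{2}_{m,\alpha}(\mu)\lesssim_{A}r^{2}m^{-(2\beta+2p+2\alpha p)}$ (replacing the polished-tail condition of Remark~\ref{rem_OracleProxyInequalityUnderPolishedTails}) guarantees $V_{m,\alpha}-B^{2}_{m,\alpha}(\mu)\gtrsim\delta^{2}$ once $m\gtrsim_{A,r}t^{mm}_{\beta,p,r,\alpha}$; a Bernstein-type inequality then yields $\mathbb{P}\{\tau_{\alpha}\ge m\}\le\exp(-c_{A}m^{4\alpha p-1})$ for such $m$. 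Feeding this, together with the trivial bound $\mathbb{P}\{\tau_{\alpha}\ge m\}\le 1$ on the intermediate range $t^{*}_{\alpha}<m\lesssim t^{mm}_{\beta,p,r,\alpha}$, into the reduction $\mathbb{E}(S_{\tau_{\alpha}}-S_{t^{*}_{\alpha}})^{+}\lesssim_{A}\sum_{m>t^{*}_{\alpha}}m^{2p}\delta^{2}\,\mathbb{P}\{\tau_{\alpha}\ge m\}$ (obtained as in the appendix proof of Proposition~\ref{prp_OracleProxyInequalityForTheStochasticError}, following \cite{BlanchardEtal2018a}) gives $\mathbb{E}(S_{\tau_{\alpha}}-S_{t^{*}_{\alpha}})^{+}\lesssim_{A}(t^{mm}_{\beta,p,r,\alpha})^{2p+1}\delta^{2}$, which is $o(\mathcal{R}_{\beta,r,p,\alpha}(\delta))$. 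Combining the three estimates proves the claim.
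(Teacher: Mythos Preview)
Your approach is correct, and the ingredients are the same as the paper's, but the pivot is different: you decompose at $t=t^{*}_{\alpha}$, whereas the paper applies \eqref{eq_RiskDecomposition} with $t=t^{mm}_{\beta,p,r,\alpha}$ directly. The paper's choice is more economical. With $t=t^{mm}_{\beta,p,r,\alpha}$, the deterministic risk $B^{2}_{t^{mm}_{\beta,p,r,\alpha}}(\mu)+V_{t^{mm}_{\beta,p,r,\alpha}}$ is immediately $\lesssim_{A}\mathcal{R}_{\beta,r,p,\alpha}(\delta)$ from the Sobolev bound \eqref{eq_SizeOfTheBias} and the definition of $t^{mm}_{\beta,p,r,\alpha}$, so your interpolation argument for $B^{2}_{t^{*}_{\alpha}}(\mu)$ (splitting at a cutoff $M$ and balancing) becomes unnecessary --- indeed, that interpolation simply rediscovers $M=t^{mm}_{\beta,p,r,\alpha}$. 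For the bias excess, Corollary~\ref{cor_BiasInequalityForDeterministicIndices} with $t=t^{mm}_{\beta,p,r,\alpha}$ gives the prefactor $\lambda_{\lceil t^{mm}_{\beta,p,r,\alpha}\rceil}^{-(2+2\alpha)}\sim_{A}(t^{mm}_{\beta,p,r,\alpha})^{2p+2\alpha p}$, which, multiplied by $B^{2}_{t^{*}_{\alpha},\alpha}(\mu)+s_{D}\delta^{2}\lesssim_{A,\kappa}V_{t^{mm}_{\beta,p,r,\alpha},\alpha}+s_{D}\delta^{2}$, produces $\mathcal{R}_{\beta,r,p,\alpha}(\delta)$ in one line. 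For the stochastic excess, the paper invokes Proposition~\ref{prp_ControlOfTheStochasticErrorForAlphaPGe12}(ii) as a black box, which gives $\mathbb{E}(S_{\tau_{\alpha}}-S_{t^{mm}_{\beta,p,r,\alpha}})^{+}\lesssim_{A,\kappa}(t^{mm}_{\beta,p,r,\alpha})^{2p+1}\delta^{2}$ directly; your argument rederives this and then needs the extra trivially-bounded range $t^{*}_{\alpha}<m\lesssim t^{mm}_{\beta,p,r,\alpha}$, which the paper avoids since its pivot is already at $t^{mm}_{\beta,p,r,\alpha}$. Net effect: same conclusion, but the paper's proof is three displayed lines, while yours reproduces several of its intermediate steps along the way.
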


\begin{proof}
  From \eqref{eq_RiskDecomposition} with \( t = t^{mm}_{\beta, p, r, \alpha} \)
  and Proposition \ref{prp_ControlOfTheStochasticErrorForAlphaPGe12}(ii) in
  Appendix \ref{ssec_ProofAppendixForSupplementaryResults}, we obtain
  \begin{align}
    & \ \ \ \ 
    \mathbb{E} \| \hat \mu^{( \tau_{\alpha} )} - \mu \|^{2} 
  - \mathbb{E} \| \hat \mu^{( t^{mm}_{\beta, p, r, \alpha} )} - \mu \|^{2} \\ 
    \notag 
    & \lesssim_{A, \kappa} 
      \lambda_{\lceil t^{mm}_{\beta, p, r, \alpha} \rceil}^{- ( 2 + 2 \alpha)} 
      \big( B^{2}_{t^{*}_{\alpha}, \alpha}(\mu) + s_{D} \delta^{2} \big) 
    + ( t^{mm}_{\beta, p, r, \alpha} )^{2 p + 1} \delta^{2} \\ 
    \notag 
    & \lesssim_{A, \kappa} 
      \lambda_{\lceil t^{mm}_{\beta, p, r, \alpha} \rceil}^{- ( 2 + 2 \alpha)} 
      \big( V_{t^{*}_{\alpha}, \alpha} + s_{D} \delta^{2} \big) 
    + ( t^{mm}_{\beta, p, r, \alpha} )^{2 p + 1} \delta^{2} \\ 
    \notag 
    & \lesssim_{A, \kappa} 
      ( t^{mm}_{\beta, p, r, \alpha} )^{2 p + 2 \alpha p} 
      V_{t^{mm}_{\beta, p, r, \alpha}, \alpha} 
    + ( t^{mm}_{\beta, p, r, \alpha} )^{2 p + 1} \delta^{2}.
  \end{align}
  Since \( V_{t^{mm}_{\beta, p, r, \alpha}} \sim_{A} \log(t^{mm}_{\beta, p, r,
  \alpha}) \delta^{2} \) for \( \alpha p = 1 / 2 \) and \( V_{t^{mm}_{\beta, p,
  r, \alpha}} \sim_{A} \delta^{2} \) for \( \alpha p > 1 / 2 \), this gives the
  result. 
\end{proof}

\noindent Note that in addition to \eqref{eq_AssumptionOnKappa}, an assumption
on \( \kappa \) such as \( \kappa \ge \sum_{i = 1}^{D} \lambda_{i}^{2 \alpha}
\delta^{2}  \) is necessary for \( \alpha p \ge 1 / 2 \). 
Otherwise, \( \kappa = 0 \) satisfies \eqref{eq_AssumptionOnKappa}, which yields
that \( \tau_{\alpha} = D \).  



\section{Discussion and simulations}
\label{sec_DiscussionAndSimulations}
  
The results from Sections \ref{sec_FrameworkForTheAnalysisAndMainResults},
\ref{sec_DerivationOfTheMainResults} and
\ref{sec_ConstraintsInTermsOfLowerBounds} reveal three different smoothing
regimes:
For $ \alpha p \le 1 / 4 $, the risk at $ t^{\mathfrak{b}}_{\alpha} $ is of the
same order as the risk at $ t^{\mathfrak{b}} $. 
There is, however, an dimension-dependent error present and we potentially stop too late
when $ t^{\mathfrak{b}} \not \gtrsim_{\alpha, A, \kappa} s_{D}^{1 / ( 1 - 2
\alpha p )} $, i.e. we undersmooth.
For $ 1 / 4 < \alpha p < 1 / 2 $, the risk at $ t^{\mathfrak{b}}_{\alpha} $ is
still of the same order as the risk at $ t^{\mathfrak{b}} $ and the
dimension-dependent
error disappears.
Note, however, that we lose in the constant $ C_{\alpha, A, \kappa} $ from
Theorem \ref{thm_BalancedOracleInequality}, which was discussed in detail after
Proposition \ref{prp_AlphaBalancedOracleLowerBounds}. 
For $ 1 / 2 \le \alpha p $, the risk at $ t^{\mathfrak{b}}_{\alpha} $ can be of
smaller order than the risk at $ t^{\mathfrak{b}} $. 
We potentially stop too early, i.e. we oversmooth.
This is summarised in Table \ref{tab_SmoothingRegimes}. 

\begin{table}[t]
  \centering
  \begin{tabular}{lll}
    \\[-1.8ex]\hline
    \hline\\[-1.8ex]
    \( \alpha p < 1 / 4 \)  & \( 1 / 4 \le \alpha p < 1 / 2 \) & \( 1 / 2 \le \alpha p \) \\
    \\[-1.8ex]\hline
    \\[-1.8ex]
    Risk at \( t^{\mathfrak{b}}_{\alpha} \sim \) Risk at \( t^{\mathfrak{b}} \)
    &
    Risk at \( t^{\mathfrak{b}}_{\alpha} \sim \) Risk at \( t^{\mathfrak{b}} \)
    & 
    Risk at \( t^{\mathfrak{b}}_{\alpha} \not \lesssim \) Risk at \( t^{\mathfrak{b}} \),
    \\
    & & \( t^{\mathfrak{b}}_{\alpha} \not \gtrsim t^{\mathfrak{b}} \), stop too early 
    \\[+1.8ex]
    Dimension error & No dimension error & Dimension error
    \\
    Stop too late for \( t^{\mathfrak{b}} \not \gtrsim s_{D}^{1 / ( 1 - 2 \alpha p )} \) & &
    \\[+1.8ex]
    Undersmoothing          & Loss in the constant             & Oversmoothing  \\
    \\[-1.8ex]\hline
    \hline\\[-1.8ex]
  \end{tabular}
  \caption{Overview of the smoothing regimes.}
  \label{tab_SmoothingRegimes} 
\end{table}

\noindent In Section \ref{ssec_EstimationResultsForSimulatedData}, we discuss
particular choices of \( \alpha \) and in Section
\ref{ssec_EstimationResultsForSimulatedData}, we compare our theoretical results
with the estimation results for simulated data.

\subsection{Choosing the smoothing parameter $ \alpha $}
\label{ssec_ChoosingTheSmoothingParameterAlpha}

We consider the problem of choosing a suitable smoothing parameter $ \alpha \ge
0$ in order to adaptively estimate signals from $ H^{\beta}(r, D) $ for fixed $
r > 0 $ and a range of smoothness levels $ \beta $ in $ [ \beta_{\min}, \infty
) $. 
Here, we assume that $ \beta_{\min} $ is a minimal a priori smoothness available to
the user. This yields the minimally sufficient approximation dimension \( D
\sim t^{mm}_{\beta_{\min}, p, r} = ( r^{2} \delta^{- 2} )^{1 / ( 2 \beta_{\min}
+ 2 p + 1 )} \), see the discussion in Section \ref{ssec_StructuralAssumptions}. 
Note that the choice $ \beta_{\min} = 0 $, which provides a sufficient
approximation for any degree of smoothness $ \beta \ge 0 $, may already be
computationally feasible. 
For $ D \sim t^{mm}_{\beta_{\min}, p, r}  $, the size of the standard deviation
term is of order
\begin{align}
  s_{D} \sim_{\alpha, A} 
  \begin{cases}
    ( t^{mm}_{\beta_{\min}, p, r} )^{1 / 2 - 2 \alpha p}, & \alpha p < 1 / 4, \\[2pt]
    \log t^{mm}_{\beta_{\min}, p, r},                     & \alpha p = 1 / 4, \\[2pt]
    1,                                                    & \alpha p > 1 / 4. 
  \end{cases}
\end{align}

When a maximal degree of smoothness $ \beta_{\max} $ is known, the
user may consider the tradeoff between the smoothing parameter \( \alpha \)
and the constant \( C_{\alpha, A, \kappa} \) in Theorem
\ref{thm_BalancedOracleInequality}. 
The optimal smoothing index is then given by the
smallest \( \alpha \), which guarantees adaptation over all \( \beta \in [
  \beta_{\min}, \beta_{\max} ] \). 
By Corollary \ref{cor_AdaptiveUpperBoundForSobolevTypeEllipsoids}, this index is
given by the smallest \( \alpha \in [ 0, 1 / ( 4 p ) ) \) such that 
\begin{align}
  t^{mm}_{\beta_{\max} , p, r} 
  & \gtrsim_{\alpha, A, \kappa} 
    ( t^{mm}_{\beta_{\min}, p, r} )^{\frac{1 / 2 - 2 \alpha p}{1 - 2 \alpha p}}, \\ 
  \notag 
  \quad \text{ i.e. } \quad    
  2 \beta_{\max} + 2 p + 1 
  & \le \frac{1 - 2 \alpha p}{1 / 2 - 2 \alpha p}
        ( 2 \beta_{\min} + 2 p + 1  ). 
\end{align}

When no such \( \beta_{\max} \) is known, the natural choice for the smoothing
index is \( \alpha = 1 / ( 4 p) \), which is the smallest index at which the
dimension dependent error is of lower order for any $ \beta \ge \beta_{\min} $:
Theorem \ref{thm_BalancedOracleInequality} together with
\eqref{eq_ComparabilityOfOracleRisks} 
yields that for all \( \beta \ge \beta_{\min} \),
\begin{align}
  \mathbb{E} \| \hat \mu^{( \tau_{1 / ( 4 p )} )} - \mu \|^{2} 
  \le C_{A, \kappa} \Big( 
        \min_{t \in [ 0, D ]} 
        \mathbb{E} \| \hat \mu^{( t )} - \mu \|^{2} 
      + ( \log t^{mm}_{\beta_{\min}, p, r} )^{2 ( 2 p + 1 )} \delta^{2} 
  \Big)
\end{align}
with a constant \( C_{A, \kappa} \) depending on \( p, C_{A} \) and \( C_{\kappa}
\). 
For any \( \beta \ge \beta_{\min} \), \( t^{mm}_{\beta, p, r} \) is
essentially larger than \( \log t^{mm}_{\beta_{\min}, p, r} \) up to a constant
depending on \( \beta \).
Therefore, for any \( \beta \ge \beta_{\min} \), 
\begin{align}
  \mathbb{E} \| \hat \mu^{( \tau_{1 / ( 4 p )} )} - \mu \|^{2} 
  \le C_{A, \kappa, \beta} \mathcal{R}^{*}_{\beta, p, r}(\delta) 
  \qquad \text{for all } \mu \in H^{\beta}(r, D)
\end{align}
with a constant \( C_{A, \kappa, \beta} > 0 \) which depends on \( p, C_{A},
C_{\kappa} \) and \( \beta \). 

This clearly shows the advantage of smoothing compared to no smoothing: We can
directly influence the range of adaptation, whereas whithout smoothing, the
range is fixed and we cannot expect to adapt to signals of smoothness greater
than $ 2 \beta_{\min} + p + 1 / 2 $. 
Additionally, the discussion above yields a natural choice for $ \alpha $, i.e. $
\alpha = 1 / ( 4 p ) $, which in particular depends only on the degree of the
polynomial spectral decay $ p $.
This choice can further be optimised given additional information about $
\beta_{\max} $. 

Finally, we may not have access to arbitrary powers of \( ( A A^{\top} ) \)
and only be able to choose between \( \alpha = 0 \) and \( \alpha = 1 \). 
For the direct comparison of nonsmoothed residual stopping and the discrepancy
principle for the normal equation, our results show the following:
As long as \( p < 1 / 2 \), we should clearly prefer the $ \alpha = 1 $. 
When \( p \) is only slightly larger than \( 1 / 2 \), no method is clearly
better than the other and our choice should depend on the size of \( D \) and
possibly additional prior knowledge about the signals we want to estimate.
Finally, when \( p \) is substantially larger than \( 1 / 2 \),
we should prefer nonsmoothed residual stopping. 
In particular, the two-step procedure from \citet{BlanchardEtal2018a} -- when
computationally affordable -- should produce uniformly better results, since we
neither pay in the rate nor in the constant.


\subsection{Estimation results for simulated data}
\label{ssec_EstimationResultsForSimulatedData}

In this section, the properties of smoothed residual stopping, which have been
analysed in the previous sections are illustrated by Monte Carlo simulations.
Analogous to the simulations in \citet{BlanchardEtal2018a}, we set
\begin{align}
  \delta = 0.01, \quad 
  p = 0.5, \quad 
  \lambda_{i} = i^{- p}, \quad i = 1, ... D 
  \qquad \text{ and } \quad 
  \kappa = \sum_{i = 1}^{D} \lambda_{i}^{2 \alpha} \delta^{2}
\end{align}
such that \( t^{*}_{\alpha} = t^{\mathfrak{b}}_{\alpha} \). 
In this setting, the natural parameter choice from Section
\ref{ssec_ChoosingTheSmoothingParameterAlpha} is $ \alpha = 1 / ( 4 p ) = 0.5 $. 
The threshhold at which we enter the oversmoothing regime is $ \alpha = 1 / (
2 p ) = 1 $.
We consider the signals \( \mu^{( \infty )}, \mu^{( 3.0 )}, \mu^{( 2.1 )} \) and
\( \mu^{( 0.5)} \) defined by
\begin{align}
  \mu^{(\infty)}_{i} & = 5 \exp(- 0.1 i), \qquad
  \qquad \ \ \ \ \,
  \mu^{(3.0)}_{i} = 500 | U_i | i^{- 2.05}, \quad 
  \\ 
  \notag 
  \mu^{(2.1)}_{i} & = 5000 | \sin(0.01 i) | i^{- 1.6}, \qquad 
  \mu^{(0.5)}_{i} = 250 | \sin(0.002 i) | i^{- 0.8},
\end{align}
with \( ( U_{i} )_{i \le D} \) independent standard uniform random variables.
\( \mu^{( \infty )}, \mu^{( 2.1 )} \) and \( \mu^{( 0.5 )} \) are the
\emph{supersmooth}, \emph{smooth} and \emph{rough} signals from
\cite{BlanchardEtal2018a}, respectively.
The random signal \( \mu^{( 3.0 )} \) will further illustrate the effect of
gradually increasing the smoothing index \( \alpha \). 
All signals are indexed by their smoothness parameter \( 2 \beta \)
for the corresponding Sobolev-type ellipsoid \( H^{\beta}(r, D) \), i.e. they
are ordered \( ( \mu^{( \infty )}, \mu^{( 3.0 )}, \mu^{( 2.1 )}, \mu^{( 0.5 )})
\) from smooth to rough.
The SVD coefficients \( ( \mu_{i} )_{i \le D} \) of the signals and their decay
are illustrated in Figure \ref{fig_FourSignals}.

\begin{figure}[t]
  \centering
  \includegraphics[width=\textwidth]{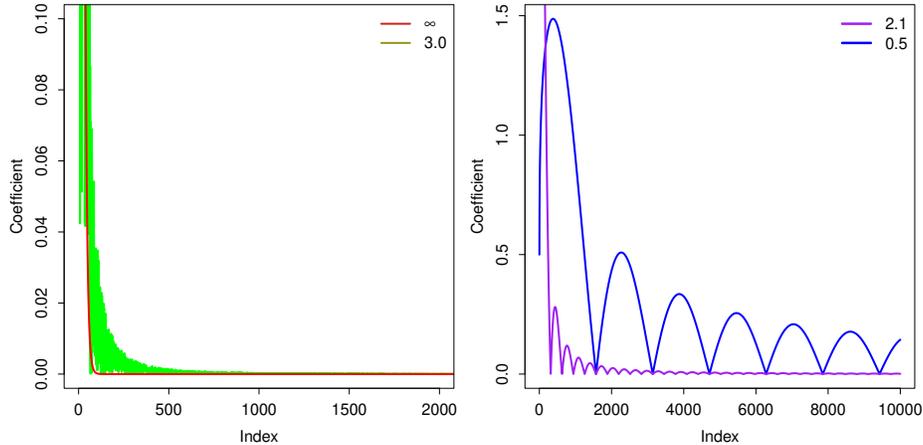}
  \caption{SVD coefficients for four signals of different smoothness.}
  \label{fig_FourSignals} 
\end{figure}

Initially, we set \( D = D_{\delta} = 10000 \) to make our results directly
comparable with \cite{BlanchardEtal2018a}. 
In this setting, the integer valued classical oracle indices of \( ( \mu^{(
\infty )}, \mu^{( 3.0 )}, \mu^{( 2.1 )}, \mu^{( 0.5 )}) \) are given by \( (43,
58, 504, 1331) \).
The balanced counterparts are \( (37, 52, 445, 2379) \). 
For any of the signals, 1000 realisations of the model
\begin{align}
  \label{eq_SimulationModel}
  Y_{i} & = \lambda_{i} \mu_{i} + \delta \varepsilon_{i}, \qquad 
  i = 1, \dots D
\end{align}
are simulated. For each of these, we calculate the smoothed residual stopping
time \( \tau_{\alpha} \) for smoothing parameters \( \alpha \in \{ 0, 0.2, 0.5, 1,
1.5 \} \).
As in \cite{BlanchardEtal2018a}, we compute the
\emph{relative efficiency} 
\begin{align}
  \big( 
    \min_{m \le D} 
    \mathbb{E} \| \widehat{\mu}^{( m )} - \mu \|^{2}
  \big)^{1 / 2} 
  / 
  \| \widehat{\mu}^{( \tau_{\alpha} )} - \mu \|,
\end{align}
which serves as an estimate for the inverse of the square root
of the constant between \( \mathbb{E} \| \widehat{\mu}^{( \tau_{\alpha})} -
\mu\|^{2} \) and \( \mathbb{E} \| \widehat{\mu}^{( t^{\mathfrak{c}} )} - \mu
\|^{2} \).
Additionally, we determine the \emph{relative stopping time} \( \lceil
t^{\mathfrak{b}}_{\alpha} \rceil / \tau_{\alpha} \).
Boxplots of these quantities are presented in Figure
\ref{fig_RelativeEfficiency}. 

The simulation of the relative efficiency closely matches the theoretical
results.
For no to little smoothing of the residuals, i.e. $ \alpha \in \{ 0, 0.2 \} $,
the risk of estimating the smooth signals $ \mu^{( \infty )} $ and $ \mu^{( 3
)} $ is clearly dominated by the dimension dependent error term in Theorem
\ref{thm_BalancedOracleInequality}, i.e. we are in the undersmoothing regime,
see Table \ref{tab_SmoothingRegimes}. 

\begin{figure}[H]
  \centering
  \includegraphics[width=\textwidth]{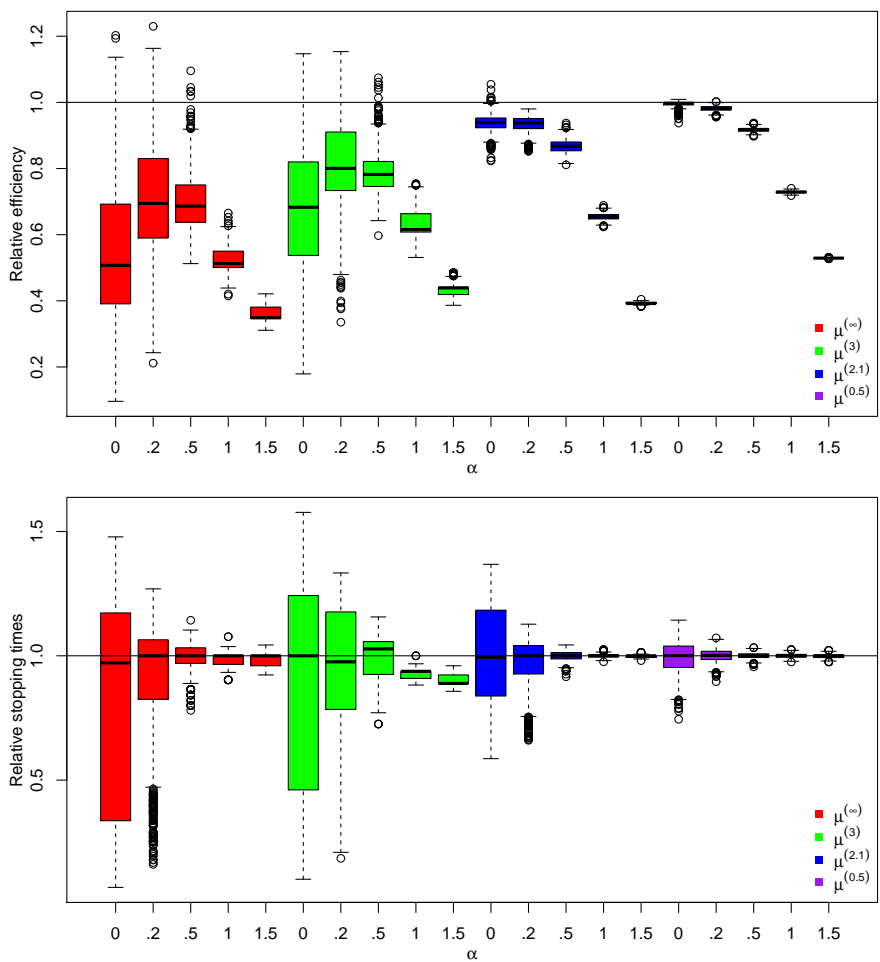}
  \caption{Boxplots of simulation results for \( D = 10000 \).}
  \label{fig_RelativeEfficiency} 
\end{figure}

\noindent This is evident, since the relative efficiency does not concentrate
well and can take values close to zero, i.e. the loss at the stopping time can
be much larger than the oracle risk. 
Smoothing is able to mitigate this.
Indeed, for the natural parameter choice $ \alpha = 1 / ( 4 p ) = 0.5 $, the
relative efficiency concentrates around a reasonable constant across all
signals.
\noindent Note, however, that for the rougher signals $ \mu^{( 2.1 )} $ and $ \mu^{(
0.5 )} $, smoothing has worsened the constant.
This shows that the tradeoff between the range of adaptation and the constant
discussed in Sections \ref{ssec_OversmoothingForAlphaPGe12} and
\ref{ssec_ChoosingTheSmoothingParameterAlpha} cannot be neglected in practice.
Finally, we observe a clear dropoff in the quality of estimation over all
signals for $ a \ge 1 / ( 2 p ) = 1 $, which is also expected from Table
\ref{tab_SmoothingRegimes}, since we are entering the oversmoothing regime.

\begin{figure}[H]
  \centering
  \includegraphics[width=\textwidth]{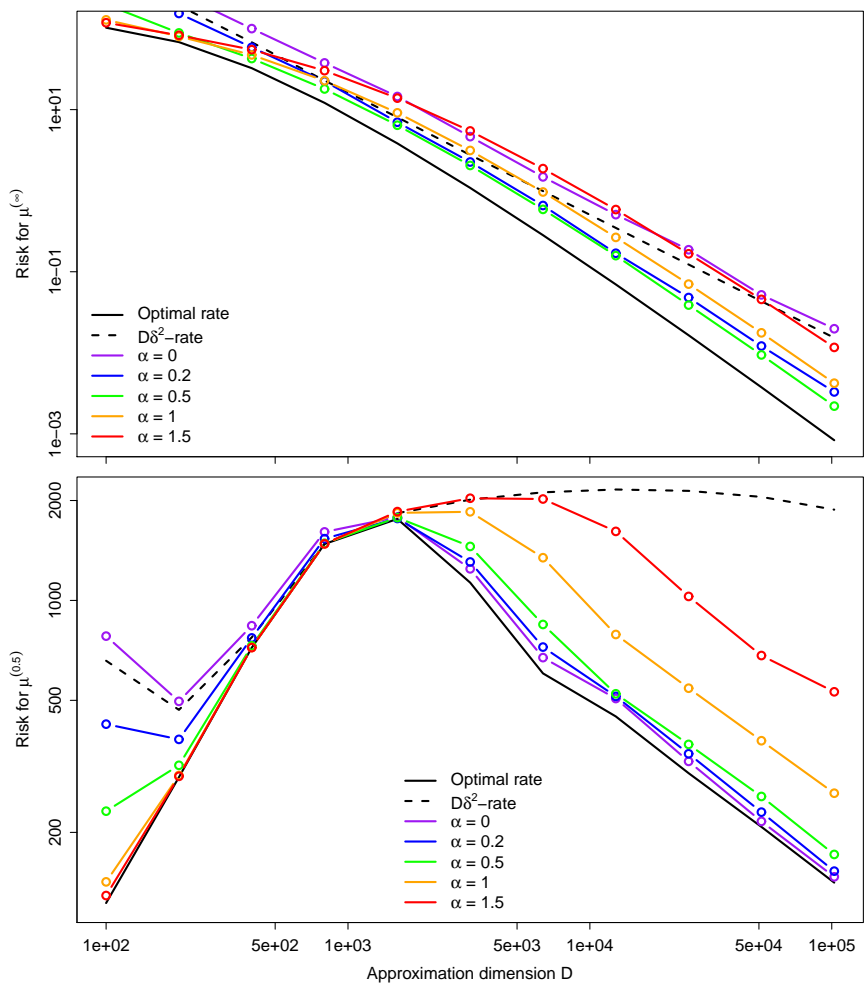}
  \caption{
    Log-log plot of the risk rates for \( \mu^{( \infty )} \) and \( \mu^{(
    0.5)} \)  and different smoothing indices.
  } 
  \label{fig_RateBehaviour} 
\end{figure}

The same effects are illustrated by the behaviour of the stopping time itself.
The boxplots of \( \lceil t^{\mathfrak{b}}_{\alpha} \rceil / \tau_{\alpha} \)
reflect our findings from Section \ref{ssec_OracleProxyInequalities} that
\( \tau_{\alpha} \) centers around \( t^{*}_{\alpha} \), which is equal to \(
t^{\mathfrak{b}}_{\alpha} \) in our case.
For \( \alpha \in \{ 0, 0.2 \} \), we are in the undersmoothing regime and
large deviations from \( t^{\mathfrak{b}}_{\alpha} \) are possible due to the
result in Proposition \ref{prp_DimensionDependentLowerBound}. 
By gradually increasing \( \alpha \), these vanish and for \( \alpha \ge 1 \),
\( \tau_{\alpha} \) evermore resembles the deterministic stopping time \(
t^{\mathfrak{b}}_{\alpha} \). 
Numerical evaluation of \( t^{\mathfrak{b}}_{\alpha} \) shows that for $ \alpha
\ge 1$, \( t^{\mathfrak{b}}_{\alpha} \) itself rapidly decreases for all signals
considered, resulting in stopping times which are substantially too early.
This increases the bias of \( \hat \mu^{( \tau_{\alpha} )} \), which explains
the loss in the relative efficiency.
The size of the loss suggests that for \( \alpha \ge 1 \), we
are indeed in the oversmoothing regime.

Finally, we directly illustrate the behaviour of convergence rates in the
asymptotical setting where \( D_{\delta} \to \infty \) for \( \delta \to 0 \).
We consider the estimation for the super-smooth signal \( \mu^{( \infty )} \)
and the rough signal \( \mu^{( 0.5 )} \).
For different smoothing indices \( \alpha \), these already display all three
possible regimes for the convergence rate.
In the simulations, we use values of \( D = D_{k} = 100 \cdot 2^{k} \) for \( k
= 0, \dots, 10 \) with corresponding noise levels
\begin{align}
  \delta_{k} 
  = \sqrt{r_{\max}^{2} / D_{k}^{2 \beta_{\min} + 2 p + 1}}, 
  \qquad k = 0, \dots, 10
\end{align}
where \( r_{\max} = 1000 \)  and \( 2 \beta_{\min} = 0.5 \).
In this scenario, \( D_{0.01} = 10000 \) as before and  \( D_{\delta_{k}} \)
grows as the minimax truncation $ t^{mm}_{0.5, p, r} $ index of the
rough signal \( \mu^{( 0.5 )} \), i.e. we assume that we want to be able to
cover signals up to at least this roughness.
Again, we simulate 1000 realisations from \eqref{eq_SimulationModel} and consider
the stopped estimator for smoothing indices \( \alpha \in \{ 0, 0.2, 0.5, 1, 1.5
\} \).
We take the mean squared loss as an estimate for the risk and compare the
convergence behaviour of the stopped estimator with the optimal rate, which is
achieved by stopping at \( t^{\mathfrak{c}} \) and the rate of stopping
deterministically at \( \sqrt{D} \), which gives the dimension-dependent rate $
D^{( 2 p + 1 ) / 2} \delta^{2} = D \delta^{2} $ from Proposition
\ref{prp_DimensionDependentLowerBound} for no smoothing.
The results are displayed in Figure \ref{fig_RateBehaviour}.

We consider the results for \( \mu^{( \infty )} \).
For $ \alpha = 0 $, we are in the undersmoothing regime and obtain the $ D
\delta^{2} $-rate, i.e. we do about as good as stopping at a deterministic index
of size $ \sqrt{D} $. 
This is exactly what we would expect from the lower bound in Proposition
\ref{prp_DimensionDependentLowerBound}. 
Smoothing of the residuals improves the rate. 
Numerical calculations show that the simulated behaviour for $ \alpha = 1 / ( 4
p ) = 0.5 $ is optimal up to a factor of 2.5.
Note, however, that for $ \alpha = 1 / ( 2 p ) = 1 $, the results already
deteriorate again, which is consistent with the fact that this is the threshhold
case from Table \ref{tab_SmoothingRegimes} at which we should lose rate
optimality. 
Finally, for $ \alpha = 1.5 $, we are deep into the oversmoothing regime and
obtain substantially suboptimal behaviour.

For \( \mu^{( 0.5 )} \), the picture is different. 
Since \( \mu^{( 0.5 )} \) is particularly rough, the risk initially increases
with the approximation dimension, simply because a larger part of the signal is
considered.
As \( t^{\mathfrak{b}} \) is always substantially greater than \( \sqrt{D} \),
we never suffer from undersmoothing due to the stochastic variability of the
residuals.
Therefore, \( \alpha = 0 \) outperforms all other indices.
As predicted by Proposition \ref{prp_AlphaBalancedOracleLowerBounds}, the
results deteriorate with increasing \( \alpha \). 
For \( \alpha \in \{ 0, 0.2, 0.5 \} \), however, they group tightly
together.
This is exactly what is expected from the theoretical results, since for
smoothing up to the natural choice $ \alpha = 1 / ( 4 p ) = 0.5 $, we should
only observe a loss in the constant but not in the rate.
For values of $ \alpha $ greater than the threshhold \( 1 / ( 2 p ) = 1 \), we
clearly observe oversmoothing.

Summarising, the simulations reiterate the theoretical results from Sections
\ref{ssec_MainResults} and \ref{sec_ConstraintsInTermsOfLowerBounds} as well
as the discussion in Section \ref{ssec_ChoosingTheSmoothingParameterAlpha}.
In particular, the parameter choice $ \alpha = 1 / ( 4 p ) $ yields reasonable
estimation results across the board for all signals.
At the same time, the tradeoff between the range of adaptation and the constant
in front of the rate is important.
Therefore, if prior information about the maximal possible smoothness is
available to the user, it should be incorporated to further optimise the choice
of $ \alpha $.



\section{Appendix}
\label{sec_Appendix}

\subsection{Proof appendix for the main result}
\label{ssec_ProofAppendixForTheMainResult}

\begin{proofof}{Proposition \ref{prp_OracleProxyInequalityForTheAlphaBias}}
  \label{prf_OracleProxyInequalityForTheAlphaBias}
  If \( B_{\tau_{\alpha}, \alpha}^{2}(\mu) > B_{t^{*}_{\alpha}, \alpha}^{2}(\mu)
  \), then we have that \( \tau_{\alpha} \le \lfloor t^{*}_{\alpha} \rfloor \).
  This yields
  \begin{align}
    \mathbb{E} \big( 
      B_{\tau_{\alpha}, \alpha}^{2}(\mu) 
    - B_{t^{*}_{\alpha}, \alpha}^{2}(\mu)
    \big)^{+} 
    & = \sum_{m = 0}^{\lfloor t^{*}_{\alpha} \rfloor - 1} 
        \lambda_{m + 1}^{2 + 2 \alpha}
        \mu_{m + 1}^{2} \mathbb{P} \{ \tau_{\alpha} \le m \} \\ 
    \notag 
    & + ( t^{*}_{\alpha} - \lfloor t^{*}_{\alpha} \rfloor ) 
        \lambda_{\lceil t^{*}_{\alpha} \rceil}^{2 + 2 \alpha} 
        \mu_{\lceil t^{*}_{\alpha} \rceil}^{2} 
        \mathbb{P} \{ \tau_{\alpha} \le \lfloor t^{*}_{\alpha} \rfloor \}.
  \end{align}
  For a fixed \( m \le \lfloor t^{*}_{\alpha} \rfloor \), we consider the event 
  \( \{ \tau_{\alpha} \le m \} = \{ R^{2}_{m, \alpha} \le \kappa \} \).
  The probability of this event can be bounded by 
  \begin{align}
    & \ \ \ \
    \mathbb{P} \{ R_{m, \alpha}^{2} \le \kappa \} 
      = \mathbb{P} \Big\{ 
          \sum_{i = m + 1}^{D} 
          \lambda_{i}^{2 \alpha} (
            \lambda_{i}^{2} \mu_{i}^{2} 
            + 2 \lambda_{i} \mu_{i} \delta \varepsilon_{i} 
            + \delta^{2} \varepsilon_{i}^{2}
          ) 
          \le \kappa 
        \Big\} \\ 
    & = \mathbb{P} \Big\{ 
          \sum_{i = m + 1}^{D} \lambda_{i}^{2 \alpha} 
          \big( 
            2 \lambda_{i} \mu_{i} \delta \varepsilon_{i} 
            + \delta^{2} ( \varepsilon_{i}^{2} - 1 )
          \big)
          \le - ( \mathbb{E} R_{m, \alpha}^{2} - \kappa ) 
        \Big\}
  \end{align}
  \begin{align}
    & \le \mathbb{P} \Big\{ 
            \sum_{i = m + 1}^{D} 
            \lambda_{i}^{2 \alpha} \delta^{2} ( \varepsilon_{i}^{2} - 1 )
            \le \frac{- ( \mathbb{E} R_{m, \alpha}^{2} - \kappa )}{2} 
          \Big\} \\ 
    \notag 
    &   + \mathbb{P} \Big\{ 
            \sum_{i = m + 1}^{D} 
            \lambda_{i}^{1 + 2 \alpha} \mu_{i} \delta \varepsilon_{i} 
            \le \frac{- ( \mathbb{E} R_{m, \alpha}^{2} - \kappa )}{4} 
          \Big\} \\ 
    \label{eq_OracleProxyInequalityForTheAlphaBias1}
    & \le \exp \bigg( 
            \frac{
              - ( \mathbb{E} R_{m, \alpha}^{2} - \kappa )^{2}
            }{16 s_{D}^{2} \delta^{4}}
          \bigg) 
        + \exp \bigg( 
            \frac{- ( \mathbb{E} R_{m, \lambda}^{2} - \kappa )^{2}}{
              32 \delta^{2} 
              \sum_{i = m + 1}^{D} \lambda_{i}^{2 + 4 \alpha} \mu_{i}^{2} 
            }
          \bigg) \\ 
    \label{eq_OracleProxyInequalityForTheAlphaBias2}
    & \le \exp \bigg( 
            \frac{
              - \big( B_{m, \alpha}^{2}(\mu) - B_{t^{*}_{\alpha}, \alpha}^{2}(\mu) \big)^{2}
            }{16 s_{D}^{2} \delta^{4}}
          \bigg) 
        + \exp \bigg( 
                 \frac{
                   - \big( B_{m, \alpha}^{2}(\mu) - B_{t^{*}_{\alpha}, \alpha}^{2}(\mu) \big)^{2}
                  }{32 \delta^{2} B_{m, \alpha}^{2}(\mu)}
          \bigg).
  \end{align}
  In order to obtain \eqref{eq_OracleProxyInequalityForTheAlphaBias1}, we use
  Lemma 1 from \citet{LaurentMassart2000} and the Gaussian tail bound \(
  \mathbb{P} \{ Z \le - t \} \le e^{- t^{2} / ( 2 \sigma^{2} )} \), \( t > 0
  \), for a random variable \( Z \) distributed according to \( N(0, \sigma
  ^{2}) \).
  Further, we use that for \( m \le \lfloor t^{*}_{\alpha} \rfloor \),  
  \begin{align}
    \mathbb{E} R_{m, \alpha}^{2} - \kappa 
    & = \mathbb{E} R_{m, \alpha}^{2} - \mathbb{E} R_{t^{*}_{\alpha}, \alpha}^{2} \\ 
    \notag 
    & = B_{m, \alpha}^{2}(\mu) - B_{t^{*}_{\alpha}, \alpha}^{2}(\mu) 
      + V_{t^{*}_{\alpha}, \alpha} - V_{m, \alpha} \\ 
    \notag 
    & \ge B_{m, \alpha}^{2}(\mu) - B_{t^{*}_{\alpha}, \alpha}^{2}(\mu) 
  \end{align}
  to obtain \eqref{eq_OracleProxyInequalityForTheAlphaBias2}. 

  We set
  \begin{align}
    F(t): 
    & = \exp \Big( \frac{- t^{2}}{ 16 s_{D}^{2} \delta^{4}} \Big) 
      + \exp \Big( 
          \frac{- t^{2}}{
            32 \delta^{2} ( B_{t^{*}_{\alpha}, \alpha}^{2}(\mu) + t )
          }
        \Big), 
    \qquad t \ge 0.
  \end{align}
  The monotonicity of \( t \mapsto B_{t, \alpha} ^{2} \) and \( F \) and a
  Riemann sum approximation yield
  \begin{align}
    & \ \ \ \
    \mathbb{E} \big( 
      B^{2}_{\tau_{\alpha}, \alpha}(\mu) - B_{t^{*}_{\alpha}, \alpha}^{2}(\mu)
    \big)^{+} \\ 
    \notag 
    & \le \sum_{m = 0}^{\lfloor t^{*}_{\alpha} \rfloor - 1} 
          \lambda_{m + 1}^{2 + 2 \alpha} \mu_{m + 1}^{2} 
          F(B_{m, \alpha}^{2}(\mu) - B_{t^{*}_{\alpha}, \alpha}^{2}(\mu)) \\ 
    \notag 
    & + ( t^{*}_{\alpha} - \lfloor t^{*}_{\alpha} \rfloor ) 
        \lambda_{\lceil t^{*}_{\alpha} \rceil}^{2 + 2 \alpha} 
        \mu_{\lceil t^{*}_{\alpha} \rceil}^{2} 
        F(
          B_{\lfloor t^{*}_{\alpha} \rfloor, \alpha}^{2}(\mu) 
          - B_{t^{*}_{\alpha}, \alpha}^{2}(\mu) 
        ) \\ 
    \notag 
    & \le \int_{B^{2}_{t^{*}_{\alpha}, \alpha}(\mu)}^{\infty} 
            F(t - B^{2}_{t^{*}_{\alpha}, \alpha}(\mu)) 
          \, d t 
      \le \int_{0}^{\infty} F(t) t \, d t \\ 
    \notag 
    & \le \frac{1}{2} \sqrt{ 2 \pi 8 s_{D}^{2} \delta^{4}} 
        + \int_{0}^{B^{2}_{t^{*}_{\alpha}, \alpha}(\mu)} 
            \exp \Big( 
              \frac{- t^{2}}{ 64 \delta^{2} B^{2}_{t^{*}_{\alpha}, \alpha}(\mu)} 
            \Big)
          \, d t 
        + \int_{B^{2}_{t^{*}_{\alpha}, \alpha}(\mu)}^{\infty} 
            \exp \Big( \frac{- t}{64 \delta^{2}} \Big)
          \, d t \\ 
    \notag 
    & \le \sqrt{ 4 \pi } s_{D} \delta^{2} 
        + \frac{1}{2} \sqrt{
            2 \pi \cdot 32 \delta^{2} B^{2}_{t^{*}_{\alpha}, \alpha}(\mu) 
          } 
        + 64 \delta^{2} 
      \le 74 s_{D} \delta^{2} + 2 B^{2}_{t^{*}_{\alpha}, \alpha}(\mu). 
  \end{align}
  For the last inequality, we use the binomial identity to obtain \( \sqrt{\pi
  \delta^{2} B ^{2}_{ t^{*}_{\alpha}, \alpha}(\mu)} \le (\pi \delta^{2} +
  B^{2}_{t^{*}_{\alpha}, \alpha}(\mu)) / 2 \) and the estimate \( \sqrt{4 \pi} +
  2 \pi \le 10 \). 
\end{proofof}

\begin{proofof}{Proposition \ref{prp_OracleProxyInequalityForTheStochasticError}}
  \label{prf_OracleProxyInequalityForTheStochasticError}
  The Cauchy-Schwarz inequality and \( \mathbb{E} \varepsilon_{m}^{4} = 3 \)
  yield
  \begin{align}
    \label{eq_OracleProxyInequalityForTheStochasticError1}
    \mathbb{E} ( S_{\tau_{\alpha}} - S_{\lceil t^{*}_{\alpha} \rceil} )^{+} 
    & = \delta^{2} \sum_{m = \lceil t^{*}_{\alpha} \rceil + 1}^{D} 
        \lambda_{m}^{- 2}
        \mathbb{E} ( \varepsilon_{m}^{2} \mathbf{1} \{ \tau_{\alpha} \ge m \} ) \\ 
    \notag 
    & \le \delta^{2} \sum_{m = \lceil t^{*}_{\alpha} \rceil + 1}^{D}
          \lambda_{m}^{- 2} \sqrt{\mathbb{E} \varepsilon_{m}^{4}} 
          \sqrt{\mathbb{P} \{ \tau_{\alpha} \ge m \}} \\ 
    \notag 
    & \le \sqrt{3} \delta^{2} \sum_{m = \lceil t^{*}_{\alpha} \rceil + 1}^{D}
          \lambda_{m}^{- 2} \sqrt{\mathbb{P} \{ \tau_{\alpha} \ge m \}}. 
  \end{align}
  The smoothed residual stopping time satisfies \( \tau_{\alpha} \ge m \)
  exactly when \( R^{2}_{m - 1, \alpha} > \kappa \).
  For \( m \ge \lceil t^{*}_{\alpha} \rceil + 1 \), the probability above can
  therefore be estimated by 
  \begin{align}
    \mathbb{P} \{ R^{2}_{m - 1, \alpha} > \kappa \} 
    & = \mathbb{P} \Big\{ 
          \sum_{i = m}^{D} \lambda_{i}^{2 \alpha} 
          ( \lambda_{i} \mu_{i} + \delta \varepsilon_{i} )^{2}
          > \kappa 
        \Big\} \\ 
    & = \mathbb{P} \Big\{ 
          \sum_{i = m}^{D} \lambda_{i}^{2 + 2 \alpha} \mu_{i}^{2} 
        + 2 \lambda_{i}^{1 + 2 \alpha} \mu_{i} \delta \varepsilon_{i} 
        + \lambda_{i}^{2 \alpha} \delta^{2} \varepsilon_{i}^{2} 
          > \kappa
        \Big\} \\ 
    & \le \mathbb{P} \Big\{ 
            \sum_{i = m}^{D} 
            \lambda_{i}^{2 \alpha} \delta^{2} ( \varepsilon_{i}^{2} - 1 )  
            > \frac{\kappa - \mathbb{E} R^{2}_{m - 1, \alpha}}{2}
          \Big\} \\ 
    \notag 
    &   + \mathbb{P} \Big\{ 
            \sum_{i = m}^{D} 
            \lambda_{i}^{1 + 2 \alpha} \mu_{i} \delta \varepsilon_{i} 
            > \frac{\kappa - \mathbb{E} R^{2}_{m - 1, \alpha}}{4} 
          \Big\}
  \end{align}
  \begin{align}
    \label{eq_OracleProxyInequalityForTheStochasticError2}
    & \le \exp \bigg( 
            \frac{- ( \kappa - \mathbb{E} R^{2}_{m - 1, \alpha} )^{2}}{
              16 \sum_{i = m}^{D} \lambda_{i}^{4 \alpha} \delta^{4} 
            + 8 \delta^{2} \lambda_{m}^{2 \alpha} 
              ( \kappa - \mathbb{E} R^{2}_{m - 1, \alpha} ) 
            }
          \bigg) \\ 
    \notag 
    &   + \exp \bigg( 
            \frac{- ( \kappa - \mathbb{E} R^{2}_{m - 1, \alpha} )^{2}}{
              32 \delta^{2} 
              \sum_{i = m}^{D} \lambda_{i}^{2 + 4 \alpha} \mu_{i}^{2} 
            }
          \bigg).
  \end{align}
  The last inequality follows again from Lemma 1 in \cite{LaurentMassart2000}
  and the Gaussian tail bound \( \mathbb{P} \{ Z \le - t \} \le e^{- t^{2} / (
  2 \sigma^{2} )} \), \( t > 0 \), for a random variable \( Z \) distributed
  according to \( N(0, \sigma ^{2}) \).

  Since \( \alpha p < 1 / 2 \), we have the following essential lower bound for
  the numerator in the exponential terms in
  \eqref{eq_OracleProxyInequalityForTheStochasticError2}:
  \begin{align}
    \label{eq_OracleProxyInequalityForTheStochasticError3}
    \kappa - \mathbb{E} R^{2}_{m - 1, \alpha} 
    & \ge \mathbb{E} R^{2}_{t^{*}_{\alpha}, \alpha} 
        - \mathbb{E} R^{2}_{m - 1, \alpha} \\ 
    \notag 
    & = B^{2}_{t^{*}_{\alpha}, \alpha}(\mu) - V_{t^{*}_{\alpha}, \alpha} 
      + V_{m - 1, \alpha} - B^{2}_{m - 1, \alpha}(\mu) \\ 
    \notag 
    & \gtrsim_{A} \sum_{i = \lceil t^{*}_{\alpha} \rceil + 1}^{m - 1}
                  i^{- 2 \alpha p} \delta^{2} 
      \ge \delta^{2} 
          \int_{\lceil t^{*}_{\alpha} \rceil + 1}^{m} t^{- 2 \alpha p} \, dt  \\ 
    \notag 
    & \ge \frac{\delta^{2}}{1 - 2 \alpha p} \big( 
            m^{1 - 2 \alpha p} - \lceil t^{*}_{\alpha} \rceil^{1 - 2 \alpha p} 
          \big). 
  \end{align}
  For the denominators, we use the upper bounds
  \begin{align}
    \label{eq_OracleProxyInequalityForTheStochasticError4}
    \sum_{i = m}^{D} \lambda_{i}^{4 \alpha} & \le s_{D}^{2},
    \\ 
    \label{eq_OracleProxyInequalityForTheStochasticError5}
    \lambda_{m}^{2 \alpha} ( \kappa - \mathbb{E} R^{2}_{m - 1, \alpha} ) 
    & \le \lambda_{m}^{2 \alpha} \Big( 
            \sum_{i = 1}^{D} \lambda_{i}^{2 \alpha} \delta^{2} 
          + C_{\kappa} s_{D} \delta^{2} 
          - \sum_{i = m}^{D} \lambda_{i}^{2 \alpha} \delta^{2} 
          \Big) \\ 
    \notag 
    & \le ( 1 + C_{\kappa} ) s_{D}^{2} \delta^{2},
  \end{align}
  and
  \begin{align}
    \label{eq_OracleProxyInequalityForTheStochasticError6}
    \sum_{i = m}^{D} \lambda_{i}^{2 + 4 \alpha} \mu_{i}^{2} 
    & \le \lambda_{m}^{2 \alpha} B^{2}_{m - 1, \alpha}(\mu) \\ 
    \notag 
    & \le \lambda_{m}^{2 \alpha} \Big( 
            V_{m, \alpha} + \kappa 
          - \sum_{i = 1}^{D} \lambda_{i}^{2 \alpha} \delta^{2} 
          \Big) \\ 
    \notag 
    & \le ( 1 + C_{\kappa} ) s_{D}^{2} \delta^{2}.
  \end{align}
  for \( m \ge \lceil t^{*}_{\alpha} \rceil + 1 \). 

  Together, this yields
  \begin{align}
    \label{eq_OracleProxyInequalityForTheStochasticError7}
    \mathbb{E} ( S_{\tau_{\alpha}} - S_{\lceil t^{*}_{\alpha} \rceil} )^{+}
    \lesssim \delta^{2}  
             \sum_{m = \lceil t^{*}_{\alpha} \rceil}^{D}
             m^{2 p} 
             \exp \bigg( 
               \frac{ 
                 - ( 
                     m^{1 - 2 \alpha p} 
                   - \lceil t^{*}_{\alpha} \rceil^{1 - 2 \alpha p}
                   )^{2}
               }{C_{A, \kappa} ( 1 - 2 \alpha p )^{2} s_{D}^{2}}
             \bigg)
  \end{align}
  for a constant \( C_{A, \kappa} > 0 \) depending on \( p, C_{A} \) and \(
  \kappa \). 
  By a Riemann sum approximation, the sum in
  \eqref{eq_OracleProxyInequalityForTheStochasticError7} can essentially be
  estimated from above by
  \begin{align}
    \label{eq_OracleProxyInequalityForTheStochasticError8}
    & \ \ \ \ 
    \int_{\lceil t^{*}_{\alpha} \rceil}^{\infty} 
      t^{2 p}
      \exp \bigg( 
        \frac{
          - ( t^{1 - 2 \alpha p} - \lceil t^{*}_{\alpha} \rceil^{1 - 2 \alpha p})^{2}
        }{C_{A, \kappa} ( 1 - 2 \alpha p )^{2} s_{D}^{2}}
      \bigg)
    \, d t \\ 
    \notag 
    & \sim_{\alpha, A} 
      \int_{\lceil t^{*}_{\alpha} \rceil^{1 - 2 \alpha p}}^{\infty} 
        u^{\frac{2 p + 2 \alpha p}{1 - 2 \alpha p}} 
        \exp \bigg( 
          \frac{
            - ( u - \lceil t^{*}_{\alpha} \rceil^{1 - 2 \alpha p} )^{2}
          }{C_{A, \kappa} ( 1 - 2 \alpha p )^{2} s_{D}^{2}}
        \bigg)
      \, d u \\ 
    \notag 
    & \lesssim_{\alpha, A}  
      \int_{0}^{\infty} 
        ( u + \lceil t^{*}_{\alpha} \rceil^{1 - 2 \alpha p} )^{
          \frac{2 p + 2 \alpha p}{1 - 2 \alpha p}
        } 
        \exp \bigg( 
          \frac{- u^{2}}{C_{A, \kappa} ( 1 - 2 \alpha p )^{2} s_{D}^{2}} 
        \bigg)
      \, d u \\ 
    \notag 
    & \lesssim_{\alpha, A, \kappa} 
      \lceil t^{*}_{\alpha} \rceil^{2 p + 2 \alpha p} s_{D} 
    + s_{D}^{( 2 p + 1 ) / ( 1 - 2 \alpha p )}.  \\ 
    \notag 
    & \lesssim_{\alpha, A, \kappa} 
      ( t^{*}_{\alpha} )^{2 p + 1}
    + s_{D}^{( 2 p + 1 ) / ( 1 - 2 \alpha p )}. 
  \end{align}
  Noting that \( \mathbb{E} ( S_{\lceil t^{*}_{\alpha} \rceil} - S
  _{t^{*}_{\alpha}} )^{+} \lesssim_{A} ( t^{*}_{\alpha} )^{2 p} \delta^{2} \)
  and \( V_{t} \sim_{A} t^{2 p + 1} \delta^{2} \) yields the result.
\end{proofof}


\subsection{Proof appendix for supplementary results}
\label{ssec_ProofAppendixForSupplementaryResults}

\begin{proofof}{Proposition \ref{prp_DimensionDependentLowerBound}}
  For \( \mu = 0 \) and a fixed \( i_{0} \), we have \( \tau_{\alpha} \ge
  i_{0} \) if and only if 
  \begin{align}
    \label{eq_DimensionDependentLowerBound1}
    R^{2}_{i_{0} - 1, \alpha} 
    = \sum_{i = i_{0}}^{D} 
      \lambda_{i}^{2 \alpha} ( \lambda_{i} \cdot 0 + \delta \varepsilon_{i} )^{2} 
    = \sum_{i = i_{0}}^{D}
      \lambda_{i}^{2 \alpha} \delta^{2} \varepsilon_{i} ^{2} 
    > \kappa.
  \end{align}
  This condition can be reformulated to
  \begin{align}
    \label{eq_DimensionDependentLowerBound2}
    \sum_{i = i_{0}}^{D} 
    \lambda_{i}^{2 \alpha} ( \varepsilon_{i}^{2} - 1 )
    - \sum_{i = 1}^{i_{0} - 1} \lambda_{i}^{2 \alpha} 
    & > \delta^{- 2} \kappa - \sum_{i = 1}^{D} \lambda_{i}^{2 \alpha}.
  \end{align}
  Assumption \eqref{eq_AssumptionOnKappa} and the fact that \( \sum_{i =
  1}^{i_{0} - 1} \lambda_{i}^{2 \alpha} \lesssim_{\alpha, A} i_{0}^{1 - 2
  \alpha p} \) imply that there exists a constant \( C_{\alpha, A, \kappa} > 0
  \) depending only on \( \alpha, p, C_{A} \) and \( C_{\kappa} \) such that
  for \( i_{0} \sim s_{D}^{1 / ( 1 - 2 \alpha p ) } \), 
  \begin{align}
    \label{eq_DimensionDependentLowerBound3}
    s_{D}^{-1} 
    \sum_{i = i_{0}}^{D} 
    \lambda_{i}^{2 \alpha} ( \varepsilon_{i}^{2} - 1 )
    > C_{\alpha, A, \kappa} 
  \end{align}
  is sufficient for \eqref{eq_DimensionDependentLowerBound2}. 
  Since \( \alpha p \le 1 / 4 \), the left-hand side normalises:
  We have
  \begin{align}
    \tilde s_{D}^{2}: 
    & = \text{Var} \sum_{i = i_{0}}^{D} 
       \lambda_{i}^{2 \alpha} ( \varepsilon_{i}^{2} - 1 ) \\ 
    \notag 
    & = 2 \sum_{i = i_{0}}^{D} \lambda_{i}^{4 \alpha}
    \gtrsim_{A} \begin{cases}
                  D^{1 - 4 \alpha p} - i_{0}^{1 - 4 \alpha p},
                  & \alpha p < 1 / 4, \\ 
                  \log D - \log i_{0},
                  & \alpha p = 1 / 4, 
                \end{cases}
  \end{align}
  which implies that \( \tilde s_{D}^{2} \to \infty \) for \( \delta \to 0 \),
  since \( i_{0} = o(D) \). 
  This yields that the sum in \eqref{eq_DimensionDependentLowerBound3} satisfies
  Lindeberg's condition.
  By Slutzky's Lemma, the left-hand side in
  \eqref{eq_DimensionDependentLowerBound3} then converges in distribution to a
  centred Gaussian random variable \( Z \) and
  \begin{align}
    \mathbb{P} \Big\{ 
      s_{D}^{-1} \sum_{i = i_{0} + 2}^{D} 
      \lambda_{i}^{2 \alpha} ( \varepsilon_{i}^{2} - 1 ) 
      > C_{\alpha, A, \kappa} 
    \Big\} 
    \xrightarrow[]{\delta \to 0} \mathbb{P} \{ Z > C_{\alpha, A, \kappa} \} > 0. 
  \end{align}
  This implies that \( \mathbb{P} \{ \tau_{\alpha} \ge i_{0} \} \ge C \) for
  some constant \( C > 0 \) and \( \delta \) sufficiently small.
  Together with \eqref{eq_GeneralVarianceLowerBound}, this gives
  \begin{align}
    \label{eq_ResultFor0}
    \mathbb{E} \| \widehat{\mu}^{( \tau_{\alpha} )} - 0 \|^{2} 
    \ge \mathbb{P} \{ \tau_{\alpha} \ge i_{0} \} V_{i_{0}} 
    \ge C s_{D}^{( 2 p + 1 ) / ( 1 - 2 \alpha p )} \delta^{2},
  \end{align}
  since \( V_{i_{0}} \sim_{A} i_{0}^{2 p + 1} \delta^{2} \). 

  Finally, we note that for \( \mu \ne 0 \), 
  \begin{align}
    \mathbb{P} \{ R^{2}_{i_{0}, \alpha} \ge \kappa \} 
    & = \mathbb{P} \Big\{ 
          \sum_{i = i_{0}}^{D}
          \lambda_{i}^{2 + 2 \alpha} \mu_{i}^{2} 
        + 2 \lambda_{i}^{1 + 2 \alpha} \mu_{i} \delta \varepsilon_{i} 
        + \lambda_{i}^{2 \alpha} \delta^{2} \varepsilon_{i}^{2} 
        > \kappa
        \Big\} \\ 
    \notag 
    & \ge \mathbb{P} \Big\{ 
            \sum_{i = i_{0}}^{D}
            2 \lambda_{i}^{1 + 2 \alpha} \mu_{i} \delta \varepsilon_{i} 
          + \lambda_{i}^{2 \alpha} \delta^{2} \varepsilon_{i}^{2} 
          > \kappa
          \Big\} \\ 
    \notag 
    & \ge \mathbb{P} \Big\{ 
            \sum_{i = i_{0}}^{D}
            2 \lambda_{i}^{1 + 2 \alpha} \mu_{i} \delta \varepsilon_{i} 
            \ge 0, 
            \sum_{i = i_{0}}^{D}
            \lambda_{i}^{2 \alpha} \delta^{2} \varepsilon_{i}^{2} 
          > \kappa
          \Big\} \\ 
    \notag 
    & = \frac{1}{2}
        \mathbb{P} \Big\{ 
          \sum_{i = i_{0}}^{D}
          \lambda_{i}^{2 \alpha} \delta^{2} \varepsilon_{i}^{2} 
          \ge \kappa
        \Big\},
  \end{align}
  which shows that \eqref{eq_ResultFor0} also holds for \( \mu \ne 0 \). 
\end{proofof}

\begin{proofof}{Proposition \ref{prp_AlphaBalancedOracleLowerBounds}}
  \label{prf_AlphaBalancedOracleLowerBounds}
  We consider a signal \( \mu = \mu(\delta) \in H^{\beta}(r, D) \) with
  only one nonzero coefficient at position \( i_{0} + 1 \) given by
  \begin{align}
    \label{eq_AlphaBalancedOracleLowerBounds1}
    \mu_{i_{0} + 1}^{2} 
    & : = \lambda_{i_{0} + 1}^{- (2 + 2 \alpha)} 
          \sum_{i = 2}^{i_{0}} \lambda_{i}^{2 \alpha} \delta^{2} 
    \quad \text{and} \quad 
    \mu_{i} : = 0 
    \ \ \text{ for all } i \ne i_{0} + 1. 
  \end{align}
  Note that the coefficient \( \mu_{i_{0} + 1} \) is chosen in a way that the \(
  \alpha \)-balanced oracle \( t^{\mathfrak{b}}_{\alpha} \) is slightly smaller
  than \( i_{0} \) but of the same order.
  Under the assumption on \( \kappa \), a sufficient condition for the stopping
  criterion \( R_{i_{0}, \alpha}^{2} \le \kappa \) is given by
  \begin{align}
    \label{eq_AlphaBalancedOracleLowerBounds2}
    \sum_{i = 2}^{i_{0}} \lambda_{i}^{2 \alpha} \delta^{2} 
    + 
    2 \lambda_{i_{0} + 1}^{1 + 2 \alpha} \mu_{i_{0} + 1}
    \delta \varepsilon_{i_{0} + 1} 
    + 
    \sum_{i = i_{0} + 1}^{D} 
    \lambda_{i}^{2 \alpha} \delta^{2} \varepsilon_{i}^{2} 
    & \le \sum_{i = 1}^{D} \lambda_{i}^{2 \alpha} \delta^{2}.
  \end{align}
  We consider the different regimes of \( \alpha p \):
  \begin{enumerate}[label=(\alph*)]
    \item 
      If \( \alpha p \le 1 / 4 \), then we consider the condition
      \begin{align}
        \label{eq_AlphaBalancedOracleLowerBounds3}
        \varepsilon_{i_{0} + 1} \in [ - 1, 0 ] 
        \qquad \text{ and } \qquad 
        \sum_{i = i_{0} + 2}^{D} 
        \lambda_{i}^{2 \alpha} ( \varepsilon_{i}^{2} - 1 )
        & \le 0, 
      \end{align}
      which is sufficient for \eqref{eq_AlphaBalancedOracleLowerBounds2}.
      Due to the independence of the \( ( \varepsilon_{i} )_{i \le D }
      \), we only have to control the second part of the event defined by
      \eqref{eq_AlphaBalancedOracleLowerBounds3}.
      If we choose \( i_{0} = i_{0}(\delta) \lesssim t^{mm}_{\beta, p, r}(\delta)
      \), then the standardisation of this term
      normalises in the same way as in the proof of Proposition
      \ref{prp_DimensionDependentLowerBound} due to the growth condition on \(
      D \). 
      We have 
      \begin{align}
        \lambda_{i_{0} + 1}^{- ( 2 + 2 \alpha )} 
        \sum_{i = 2}^{i_{0}} \lambda_{i}^{2 \alpha}
        \sim_{A}  \frac{1}{1 - 2 \alpha p} ( i_{0} + 1 )^{2 p + 1}
      \end{align}
      for \( i_{0} \) sufficiently large.
      Therefore, we can choose 
      \begin{align}
        i_{0} 
        \sim ( ( 1 - 2 \alpha p ) \delta^{- 2} r^{2} )^
             {1 / ( 2 \beta + 2 p + 1 )}
      \end{align}
      when \( \delta \) is sufficiently small while still maintaining \( \mu \in
      H^{\beta}(r, D) \).
      This yields
      \begin{align}
        \mathbb{E} \| \widehat{\mu}^{( \tau_{\alpha} )} - \mu \|^{2} 
        & \gtrsim_{A} ( 1 - 2 \alpha p )^{-1} i_{0}^{2 p + 1} \delta^{2} \\ 
        \notag 
        & \gtrsim_{A}
          r^{2} ( r^{- 2} \delta^{2} / ( 1 - 2 \alpha p ) )^{
                  2 \beta / ( 2 \beta + 2 p + 1 ) 
                }.
      \end{align}

    \item If \( 1 / 4 < \alpha p < 1 / 2  \), then we rearrange
      \eqref{eq_AlphaBalancedOracleLowerBounds2} to 
      \begin{align}
        \label{eq_AlphaBalancedOracleLowerBounds4}
        2 \lambda_{i_{0} + 1}^{\alpha} 
        \sqrt{\sum_{i = 2}^{i_{0}} \lambda_{i}^{2 \alpha}}
        \varepsilon_{i_{0} + 1} 
        + 
        \sum_{i = i_{0} + 1}^{D} 
        \lambda_{i}^{2 \alpha} ( \varepsilon_{i}^{2} - 1 )
        & \le \lambda_{1}^{2 \alpha}. 
      \end{align}
      If we choose \(
        i_{0} = i_{0}(\delta) \lesssim t^{mm}_{\beta, p, r}(\delta)
      \), both terms on the left-hand side of
      \eqref{eq_AlphaBalancedOracleLowerBounds4} converge to zero in
      probability, since their variances are multiples of 
      \begin{align}
        \lambda_{i_{0} + 1}^{2 \alpha} 
        \sum_{i = 2}^{i_{0}} \lambda_{i}^{2 \alpha}
        & \lesssim_{A} ( i_{0} + 1 )^{1 - 4 \alpha p} 
        \ \ \text{and} \ \
        \sum_{i = i_{0} + 1}^{D} 
        \lambda_{i}^{4 \alpha} 
        \lesssim_{A} \sum_{i = i_{0} + 1}^{D} i^{- 4 \alpha p},
      \end{align}
      which both vanish for \( \delta \to 0 \).
      Since \( \lambda_{1}^{2 \alpha} > 0 \), this yields \( \mathbb{P} \{
        \tau_{\alpha} \le i_{0} \} \to 1 \) for \( \delta \to 0 \), which gives
      the same result as in (a).

    \item If \( \alpha p \ge 1 / 2 \), the same reasoning as in (b) allows to
      bound the probability \( \mathbb{P} \{ \tau_{\alpha} \le i_{0} \} \) from
      below for \( \delta \to 0 \). 
      Since
      \begin{align}
        \lambda_{i_{0} + 1}^{- ( 2 + 2 \alpha )} 
        \sum_{i = 2}^{i_{0}} 
        \lambda_{i}^{2 \alpha} 
        \sim_{A} \begin{cases}
                   i_{0}^{2 p + 1} \log(i_{0}), & \alpha p = 1 / 2, \\ 
                   i_{0}^{2 p + 2 \alpha p},    & \alpha p > 1 / 2,  
                 \end{cases}
      \end{align}
      we can choose \( i_{0} \) of order \( t^{mm}_{\beta, p, r, \alpha}(\delta)
      \) while still maintaining \( \mu \in H^{\beta}(r, D) \).
      This yields the bound
      \begin{align}
        \mathbb{E} \| \widehat{\mu}^{( \tau_{\alpha} )} - \mu  \|^{2} 
        \gtrsim_{A} \begin{cases}
                      ( t^{mm}_{\beta, p, r, \alpha} )^{2 p + 1 }
                      \log(t^{mm}_{\beta, p, r, \alpha}) \delta^{2},
                      & \alpha p = 1 / 2, \\[5pt]
                      r^{2} ( r^{- 2} \delta^{2} )^{
                        2 \beta / ( 2 \beta + 2 p + 2 \alpha p  )
                      },
                      & \alpha p > 1 / 2.  
                    \end{cases}
      \end{align}
      This finishes the result.
  \end{enumerate}
\end{proofof}

\begin{prp}[Control of the stochastic error for \( \alpha p \ge 1 / 2 \)]
  \label{prp_ControlOfTheStochasticErrorForAlphaPGe12}
  Assume \hyperref[eq_PSD]{\( ( \text{PSD}(p, C_{A}) ) \)} with \( \alpha p \ge
  1 / 2 \), \( \kappa \ge \sum_{i = 1}^{D} \lambda_{i}^{2
  \alpha} \delta^{2} \) and \eqref{eq_AssumptionOnKappa}. 
  Then, we have the following control over the stochastic error:
  \begin{enumerate}[label=(\roman*)]
    \item For any \( \mu \in H^{\beta}(r, D) \) which is the $ D $-dimensional
      projection of a signal satisfying the polished tail condition
      \eqref{eq_PolishedTailCondition}, there exists a constant \( C_{A, \kappa}
      > 0 \) depending on \( p, C_{A} \) and \( C_{\kappa} \) such that 
      \begin{align*}
        \mathbb{E} ( S_{\tau_{\alpha}} - S_{t^{*}_{\alpha}} )^{+} 
        \le C_{A, \kappa} ( t^{*}_{\alpha} )^{2 p + 1} \delta^{2}.
      \end{align*}
    \item For any \( \mu \in H^{\beta}(r, D) \), there exists a constant \(
      C_{A, \kappa} > 0 \) depending on \( p, C_{A} \) and \( C_{\kappa} \)
      such that 
      \begin{align*}
        \mathbb{E} ( S_{\tau_{\alpha}} - S_{t^{mm}_{\beta, p, r, \alpha}} )^{+} 
        \le C_{A, \kappa} ( t^{mm}_{\beta, p, r, \alpha} )^{2 p + 1} \delta^{2}.
      \end{align*}
  \end{enumerate}
\end{prp}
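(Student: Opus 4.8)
The plan is to follow the route of Proposition~\ref{prp_OracleProxyInequalityForTheStochasticError}, compensating for the difficulty noted there: once $\alpha p\ge 1/2$, the growth of $m\mapsto V_{m,\alpha}-B^2_{m,\alpha}(\mu)$ past the reference index is no longer strong enough on its own to keep $\tau_\alpha$ under control, because for $\alpha p>1/2$ the $\alpha$-variance $V_{m,\alpha}$ saturates at order $\delta^2$. Write $t_0$ for the reference index ($t_0=t^*_\alpha$ in~(i), $t_0=t^{mm}_{\beta,p,r,\alpha}$ in~(ii)) and $n_0=\lceil t_0\rceil$. As in Proposition~\ref{prp_OracleProxyInequalityForTheStochasticError}, subadditivity of $(\cdot)^+$ together with $\mathbb{E}(S_{n_0}-S_{t_0})^+=V_{n_0}-V_{t_0}\lesssim_A t_0^{2p}\delta^2$ reduces the claim to bounding $\mathbb{E}(S_{\tau_\alpha}-S_{n_0})^+=\delta^2\sum_{m>n_0}\lambda_m^{-2}\mathbb{E}(\varepsilon_m^2\mathbf{1}\{\tau_\alpha\ge m\})$. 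The new device is to split this sum at $M:=Cn_0$ for a suitable constant $C\ge 2$: over $n_0<m\le M$ I drop the indicator, which contributes $V_M-V_{n_0}\lesssim_A C^{2p+1}t_0^{2p+1}\delta^2$, an admissible term; over $m>M$, Cauchy--Schwarz with $\mathbb{E}\varepsilon_m^4=3$ reduces the tail to $\delta^2\sum_{m>M}\lambda_m^{-2}\sqrt{\mathbb{P}\{\tau_\alpha\ge m\}}$, which I want to make negligible. Thus everything hinges on a good choice of $C$ and on fast decay of $\mathbb{P}\{\tau_\alpha\ge m\}$ for $m>M$.

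For the tail, $\{\tau_\alpha\ge m\}=\{R^2_{m-1,\alpha}>\kappa\}$, and since $m-1\ge t^*_\alpha$ forces $\mathbb{E}R^2_{m-1,\alpha}\le\kappa$, the Laurent--Massart and Gaussian-tail computation leading to~\eqref{eq_OracleProxyInequalityForTheStochasticError2} applies. The point I would exploit is that, because $4\alpha p\ge 2$, the ``noise'' quantities in the two denominators genuinely decay in $m$: under~\hyperref[eq_PSD]{\((\text{PSD}(p,C_A))\)} one has $\sum_{i\ge m}\lambda_i^{4\alpha}\lesssim_A m^{1-4\alpha p}\le m^{-2\alpha p}$, $\lambda_m^{2\alpha}\lesssim_A m^{-2\alpha p}$, and $\sum_{i\ge m}\lambda_i^{2+4\alpha}\mu_i^2\le\lambda_m^{2\alpha}B^2_{m-1,\alpha}(\mu)$. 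Combined with the coarse upper bound $\kappa-\mathbb{E}R^2_{m-1,\alpha}\le(\kappa-\sum_i\lambda_i^{2\alpha}\delta^2)+V_{m-1,\alpha}\lesssim_{A,\kappa}\delta^2$ (resp.\ $\log(m)\delta^2$ when $\alpha p=1/2$), obtained from $|\kappa-\sum_i\lambda_i^{2\alpha}\delta^2|\le C_\kappa s_D\delta^2\lesssim_A\delta^2$ and~\eqref{eq_SizeOfTheAlphaVariance}, both denominators are $\lesssim m^{-2\alpha p}(1+\log m)\delta^4$ up to factors $B^2_{m-1,\alpha}(\mu)\le\delta^2$. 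Hence the decisive ingredient is a matching \emph{lower} bound $\kappa-\mathbb{E}R^2_{m-1,\alpha}\gtrsim_{A,\kappa}\delta^2$ (resp.\ $\log(m)\delta^2$) valid for all $m>M$: granting it, the tail bound becomes $\mathbb{P}\{\tau_\alpha\ge m\}\lesssim\exp\big(-c\,m^{2\alpha p}/(1+\log m)\big)\lesssim\exp(-c\sqrt m)$, so $\delta^2\sum_{m>M}m^{2p}\sqrt{\mathbb{P}\{\tau_\alpha\ge m\}}\lesssim\delta^2\lesssim t_0^{2p+1}\delta^2$, and adding the two variance differences yields the assertion with $C_{A,\kappa}\sim C^{2p+1}$.

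Securing this lower bound is where (i) and (ii) diverge and is the main work. In case~(ii) I take $C=C_A$ and use~\eqref{eq_SizeOfTheAlphaBias}--\eqref{eq_SizeOfTheAlphaVariance}: for $m-1\ge C\,t^{mm}_{\beta,p,r,\alpha}$ the bound $B^2_{m-1,\alpha}(\mu)\lesssim_A r^2(m-1)^{-(2\beta+2p+2\alpha p)}$ drops below $\tfrac12 V_{m-1,\alpha}$, so $\kappa-\mathbb{E}R^2_{m-1,\alpha}\ge V_{m-1,\alpha}-B^2_{m-1,\alpha}(\mu)\ge\tfrac12 V_{m-1,\alpha}\gtrsim_A\delta^2$; and $C$ can be chosen independently of $\beta$ and $r$ precisely because the exponent $2\beta+2p+2\alpha p\ge 2\alpha p\ge 1$ stays bounded away from $0$. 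In case~(i) the Sobolev decay is replaced by a geometric decay of the $\alpha$-bias read off from the polished tail condition~\eqref{eq_PolishedTailCondition} --- and this transfer is the step I expect to be delicate, since \eqref{eq_PolishedTailCondition} is stated for the \emph{unweighted} coefficients whereas the relevant quantity $B^2_{\ell,\alpha}(\mu)=\sum_{i>\ell}\lambda_i^{2+2\alpha}\mu_i^2$ is weighted. Applying~\eqref{eq_PolishedTailCondition} with $m=\ell+1$ and using that under~\hyperref[eq_PSD]{\((\text{PSD}(p,C_A))\)} the ratio $\lambda_{\rho(\ell+1)}/\lambda_{\ell+1}\ge C_A^{-2}\rho^{-p}$ is bounded below, one obtains $\sum_{\ell<i\le\rho(\ell+1)}\lambda_i^{2+2\alpha}\mu_i^2\ge c_1 B^2_{\ell,\alpha}(\mu)$ with $c_1\in(0,1)$ depending only on $C_0,\rho,C_A,\alpha,p$, hence $B^2_{\rho(\ell+1),\alpha}(\mu)\le(1-c_1)B^2_{\ell,\alpha}(\mu)$. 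Iterating $j_0$ times with $(1-c_1)^{j_0}\le\tfrac12$ and taking $C$ of the order of $\rho^{j_0}$, monotonicity of $t\mapsto B^2_{t,\alpha}(\mu)$ gives $B^2_{m-1,\alpha}(\mu)\le\tfrac12 B^2_{t^*_\alpha,\alpha}(\mu)$ for all $m-1\ge M$; since $\kappa-\mathbb{E}R^2_{m-1,\alpha}=(B^2_{t^*_\alpha,\alpha}(\mu)-B^2_{m-1,\alpha}(\mu))+(V_{m-1,\alpha}-V_{t^*_\alpha,\alpha})\ge B^2_{t^*_\alpha,\alpha}(\mu)-B^2_{m-1,\alpha}(\mu)$, this gives $\kappa-\mathbb{E}R^2_{m-1,\alpha}\ge\tfrac12 B^2_{t^*_\alpha,\alpha}(\mu)\ge\tfrac12 V_{t^*_\alpha,\alpha}\gtrsim_A\delta^2$, where I used $B^2_{t^*_\alpha,\alpha}(\mu)=\kappa-\sum_i\lambda_i^{2\alpha}\delta^2+V_{t^*_\alpha,\alpha}\ge V_{t^*_\alpha,\alpha}$ (this is where the hypothesis $\kappa\ge\sum_i\lambda_i^{2\alpha}\delta^2$ enters) and $V_{t^*_\alpha,\alpha}\gtrsim_A\delta^2$, assuming as one may that $t^*_\alpha\ge 1$. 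With the lower bounds in hand, the estimates from the second paragraph complete both parts; the only cost relative to Proposition~\ref{prp_OracleProxyInequalityForTheStochasticError} is that in~(i) the final constant also depends on the polished tail parameters.
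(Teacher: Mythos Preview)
Your proposal is correct and follows essentially the same route as the paper's proof: split the sum at a constant multiple $C\lceil t_0\rceil$ of the reference index, bound the near part trivially by a variance difference of order $t_0^{2p+1}\delta^2$, and control the far tail via the Laurent--Massart/Gaussian exponential bound of~\eqref{eq_OracleProxyInequalityForTheStochasticError2} once the key lower bound $\kappa-\mathbb{E}R^2_{m-1,\alpha}\gtrsim_A\delta^2$ is secured from the polished-tail condition in~(i) or the Sobolev decay in~(ii). The only notable differences are that the paper applies the polished-tail condition once (taking $C\sim\rho$) rather than iterating it, and that you correctly flag the dependence of the final constant on the polished-tail parameters $C_0,\rho$, which the paper's statement of the proposition omits.
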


\begin{proof}
  We proceed as in the proof of Proposition
  \ref{prp_OracleProxyInequalityForTheStochasticError} up to the inequality 
  in \eqref{eq_OracleProxyInequalityForTheStochasticError2}. 
  We split the two exponential terms in three and estimate from above with 
  \begin{align}
    \label{eq_ControlOfTheStochasticErrorForAlphaPGe121}
    \exp \bigg( 
      \frac{
        - ( \kappa - \mathbb{E} R^{2}_{m - 1, \alpha} )^{2}
      }{
        32 \sum_{i = m}^{D} \lambda_{i}^{4 \alpha} \delta^{4} 
      }
    \bigg) 
    & 
    + \exp \bigg( 
      \frac{
        - ( \kappa - \mathbb{E} R^{2}_{m - 1, \alpha} )
      }{
        16 \delta^{2} \lambda_{m}^{2 \alpha} 
      }
    \bigg) 
    \\ 
    \notag 
    & 
    + \exp \bigg( 
      \frac{
        - ( \kappa - \mathbb{E} R^{2}_{m - 1, \alpha} )^{2}
      }{
        32 \delta^{2} \sum_{i = m}^{D} 
        \lambda_{i}^{2 + 4 \alpha} \mu_{i}^{2} 
      }
    \bigg). 
  \end{align}

  For (i), we have
  \begin{align}
    \label{eq_ControlOfTheStochasticErrorForAlphaPGe122}
    B^{2}_{\lceil t^{*}_{\alpha} \rceil, \alpha}(\mu) 
    & = \sum_{i = \lceil t^{*}_{\alpha} \rceil + 1}^{D} 
        \lambda_{i}^{2 + 2 \alpha} \mu_{i}^{2} \\ 
    \notag
    & \lesssim_{A} 
      \lceil t^{*}_{\alpha} \rceil^{- ( 2 p + 2 \alpha p )}  
      \sum_{i = \lceil t^{*}_{\alpha} \rceil + 1}
          ^{\rho ( \lceil t^{*}_{\alpha} \rceil + 1 )} 
      \mu_{i}^{2} \\ 
    \notag
    & \lesssim_{A}  
      \sum_{i = \lceil t^{*}_{\alpha} \rceil + 1}
          ^{\rho ( \lceil t^{*}_{\alpha} \rceil + 1 )} 
      \lambda_{i}^{2 + 2 \alpha} \mu_{i}^{2}.
  \end{align}
  Choosing \( m - 1 \ge \rho ( \lceil t^{*}_{\alpha} \rceil + 1 ) \), we obtain
  that
  \begin{align}
    B^{2}_{m - 1, \alpha}(\mu) 
    = \sum_{i = m}^{D} \lambda_{i}^{2 + 2 \alpha} \mu_{i}^{2} 
    \le c_{A} B^{2}_{\lceil t^{*}_{\alpha} \rceil, \alpha}(\mu)
  \end{align}
  for a constant \( c_{A} < 1 \) depending on \( p, C_{A} \). 

  For (ii), choosing \( m - 1 \ge C_{A}' t^{mm}_{\beta, p, r, \alpha} \) for a
  constant \( C'_{A} > 1 \) depending on \( p \) and \( C_{A} \) yields
  \begin{align}
    \label{eq_ControlOfTheStochasticErrorForAlphaPGe123}
    B^{2}_{m - 1, \alpha}(\mu) 
    \lesssim_{A} 
    r^{2} ( C_{A}' t^{mm}_{\beta, p, r, \alpha} )^{- ( 2 \beta + 2 p + 2 \alpha p )}. 
  \end{align}

  Setting \( \bar t: = t^{*}_{\alpha} \) for (i) or \( \bar t: = t^{mm}_{\beta,
  p, r, \alpha} \) for (ii), we can therefore choose a constant \( C_{A}' > 1
  \) such that for \( m - 1 \ge C_{A}' \lceil \bar t \rceil \), 
  \begin{align}
    \kappa - \mathbb{E} R^{2}_{m - 1, \alpha} 
    & 
    = \kappa - \sum_{i = 1}^{D} \lambda_{i}^{2 \alpha} \delta^{2} 
    + V_{m - 1, \alpha} - B_{m - 1, \alpha}(\mu) 
    \\ 
    \notag 
    & 
    \ge \begin{cases}
          \kappa - \sum_{i = 1}^{D} \lambda_{i}^{2 \alpha} \delta^{2} 
        + V_{m - 1, \alpha} - c_{A} B_{t^{*}_{\alpha}, \alpha}(\mu), 
          & \bar t = t^{*}_{\alpha}, \\ 
          \kappa - \sum_{i = 1}^{D} \lambda_{i}^{2 \alpha} \delta^{2} 
        + V_{m - 1, \alpha} - B_{m - 1, \alpha}^{2}(\mu) 
          & \bar t = t^{mm}_{\beta, p, r, \alpha}, 
        \end{cases}
    \\ 
    \notag 
    & 
    \ge \begin{cases}
          ( 1 - c_{A} ) 
          \Big( 
            \kappa - \sum_{i = 1}^{D} \lambda_{i}^{2 \alpha} \delta^{2} 
          + V_{t^{*}_{\alpha}, \alpha}
          \Big), 
          & \bar t = t^{*}_{\alpha}, \\ 
          \kappa - \sum_{i = 1}^{D} \lambda_{i}^{2 \alpha} \delta^{2} 
        + V_{m - 1, \alpha} - B_{m - 1, \alpha}^{2}(\mu) 
          & \bar t = t^{mm}_{\beta, p, r, \alpha}, 
        \end{cases}
    \\ 
    \notag 
    & 
    \gtrsim_{A} \delta^{2},
  \end{align}
  where we have used \eqref{eq_ControlOfTheStochasticErrorForAlphaPGe123} and
  the definition of \( t^{mm}_{\beta, p, r, \alpha} \) from
  \eqref{eq_AlphaMinimaxTruncationIndex} for the last inequality. 
  Additionally, we have the estimates
  \begin{align}
    \sum_{i = m}^{D} \lambda_{i}^{4 \alpha} 
    &
    \lesssim_{A} \lambda_{m}^{\alpha}
    \\ 
    \qquad \text{ and } \qquad 
    \sum_{i = m}^{D} \lambda_{i}^{2 + 4 \alpha} \mu_{i}^{2}
    & \le \lambda_{m}^{2 \alpha} B^{2}_{m - 1, \alpha} (\mu) \\ 
    \notag 
    & \le \lambda_{m}^{2 \alpha} \Big( 
            \kappa - \sum_{i = 1}^{D} \lambda_{i}^{2 \alpha} \delta^{2} 
          + V_{m - 1, \alpha} 
          \Big) \\ 
    \notag 
    & \lesssim_{A, \kappa} \lambda_{m}^{2 \alpha} \log(m) \delta^{2},
  \end{align} 
  where we have used Equation \eqref{eq_InformationInTheSmoothedResiduals},
  assumption \eqref{eq_AssumptionOnKappa} and that without loss of generality,
  \( m \ge t^{*}_{\alpha} \). 
  Note that the log factor occurs only for \( \alpha p = 1 / 2 \). 
  We therefore obtain that for a constant \( C_{A, \kappa}'' > 0 \) depending on \(
  p \), \( C_{A} \) and \( C_{\kappa} \),
  \begin{align}
    \notag 
    \mathbb{E} ( S_{\tau_{\alpha}} - S_{\lceil \bar t \rceil} )^{+} 
    & \lesssim_{A}
      \delta^{2} 
      \sum_{m = \lceil \bar t \rceil}^{ \lceil C_{A}' \bar t \rceil} m^{2 p} 
    + \delta^{2} 
      \sum_{m = \lceil C_{A}' \bar t \rceil + 1}^{D}
      m^{2 p} \exp ( - m^{\alpha p} / ( C''_{A, \kappa} \log m ) ) \\ 
    & \lesssim_{A, \kappa} \bar t^{2 p + 1} \delta^{2}. 
  \end{align}
  Noting that \( \mathbb{E} ( S_{\lceil \bar t \rceil} - S_{\bar t} )^{+}
  \lesssim_{A} \bar t^{2 p} \delta^{2} \) finishes the proof.
\end{proof}



\newpage

\bibliographystyle{plainnat}
\bibliography{references.bib}

\end{document}